\tikzstyle{vertex}=[circle,fill=white,draw,inner sep=0pt,minimum size=5pt]
\tikzstyle{vortex}=[circle,fill=lightgray,draw,inner sep=0pt,minimum size=5pt]
\tikzstyle{vartex}=[circle,fill=black,draw,inner sep=0pt,minimum size=5pt]
\newcommand{\vertex}{\node[vertex]}
\tikzstyle{bvertex}=[circle,fill=white,draw,inner sep=0pt,minimum size=3pt]
\tikzstyle{bvortex}=[circle,fill=lightgray,draw,inner sep=0pt,minimum size=3pt]
\tikzstyle{bvartex}=[circle,fill=black,draw,inner sep=0pt,minimum size=3pt]
\numberwithin{equation}{section}
\numberwithin{figure}{section}
\theoremstyle{plain}
\newtheorem{Thm}{Theorem}[section]
\newtheorem{Prop}[Thm]{Proposition}
\newtheorem{Lemma}[Thm]{Lemma}
\newtheorem{Cor}[Thm]{Corollary}
\newtheorem{ClaimA}{Claim}
\theoremstyle{definition}
\newtheorem{Defn}[Thm]{Definition}
\newtheorem{Conj}[Thm]{Conjecture}
\newtheorem{Ques}[Thm]{Question}
\newtheorem*{Def*}{Definition}
\theoremstyle{remark}
\newtheorem*{Remark}{Remark}
\newtheorem*{Remarks}{Remarks}
\let \seq=\subseteq
\let \ga=\gamma
\let \Ga=\Gamma
\let \De=\Delta
\let \p=\ldots
\let \h=\text
\let \cd=\cdots
\newcommand{\WLOGone}{Without loss of generality} % i början på meningar (äter upp mellanslag)
\newcommand{\WLOGtwo}{without loss of generality} % annars (äter också upp mellanslag)
\newcommand{\Th}{Theorem}
\newcommand{\Co}{Corollary}
\newcommand{\Pro}{Proposition}
\newcommand{\Pros}{Propositions}
\newcommand{\Con}{Conjecture}
\newcommand{\Que}{Question}
\newcommand{\C}{\mathbf{C}}
\newcommand{\RGa}{R(\Ga_1,\Ga_2)}
\newcommand{\mGa}{\mm(\Ga_1,\Ga_2)}
\newcommand{\RrGa}{R_{\mathrm{red}}(\Ga_1,\Ga_2)}
\newcommand{\RbGa}{R_{\mathrm{blue}}(\Ga_1,\Ga_2)}
\newcommand{\Gr}{G_{\mathrm{red}}}
\newcommand{\Gb}{G_{\mathrm{blue}}}
\newcommand{\Rr}{R_{\mathrm{red}}}
\newcommand{\Rb}{R_{\mathrm{blue}}}
\newcommand{\Cr}{\Cc_{\mathrm{red}}}
\newcommand{\Cb}{\Cc_{\mathrm{blue}}}
\newcommand{\Gg}{\mathcal{G}}
\newcommand{\Hh}{\mathcal{H}}
\renewcommand{\C}{\mathcal{C}}
\newcommand{\mm}{\mathfrak{m}}
\newcommand{\Cc}{\mathfrak{C}}
\title[Generalised Ramsey numbers for two sets of cycles]{Generalised Ramsey numbers \\ for two sets of cycles}
\author{Mikael Hansson}
\thanks{}
\address{Department of Mathematics, Link\"oping University, SE-581 83 Link\"oping, Sweden}
\email{mikael.hansson@liu.se}
\begin{document}

\begin{abstract}
We determine several generalised Ramsey numbers for two sets $\Ga_1$ and $\Ga_2$ of cycles, in particular, all generalised Ramsey numbers $\RGa$ such that $\Ga_1$ or $\Ga_2$ contains a cycle of length at most $6$, or the shortest cycle in each set is even. This generalises previous results of Erd\H{o}s, Faudree, Rosta, Rousseau, and Schelp from the 1970s. Notably, including both $C_3$ and $C_4$ in one of the sets, makes very little difference from including only $C_4$. Furthermore, we give a conjecture for the general case. We also describe many $(\Ga_1,\Ga_2)$-avoiding graphs, including a complete characterisation of most $(\Ga_1,\Ga_2)$-critical graphs, i.e., $(\Ga_1,\Ga_2)$-avoiding graphs on $\RGa-1$ vertices, such that $\Ga_1$ or $\Ga_2$ contains a cycle of length at most $5$. For length $4$, this is an easy extension of a recent result of Wu, Sun, and Radziszowski, in which $|\Ga_1|=|\Ga_2|=1$. For lengths $3$ and $5$, our results are new even in this special case.

\bigskip

\noindent \textbf{Keywords:} generalised Ramsey number, critical graph, cycle, set of cycles
\end{abstract}

\maketitle

% \tableofcontents

\section{Introduction} \label{intro}

All graphs in this paper are finite, simple, and undirected. Furthermore, $(G_1,G_2)$ and $(\Gg_1,\Gg_2)$ will always denote a pair of non-empty (uncoloured) graphs and a pair of non-empty sets of non-empty (uncoloured) graphs, respectively. Notation will generally follow \cite{Diestel}.

Here, a \emph{red-blue graph} is a \emph{complete} graph with each edge coloured either red or blue. Red will always be the first colour and blue will always be the second.

Generalised Ramsey numbers for two sets of graphs were, to the best of the author's knowledge, first introduced by Erd\H{o}s, Faudree, Rousseau, and Schelp~\cite{E-F-R-S,F-S2}:

\begin{Defn}
The \emph{generalised Ramsey number} $R(\Gg_1,\Gg_2)$ is the least positive integer $n$, such that each red-blue graph on $n$ vertices contains a red subgraph from $\Gg_1$ or a blue subgraph from $\Gg_2$.
\end{Defn}

Note that when $|\Gg_1|=|\Gg_2|=1$, the generalised Ramsey number $R(\Gg_1,\Gg_2)$ reduces to the ordinary Ramsey number $R(G_1,G_2)$. It is easy to see that $R(\Gg_1,\Gg_2) \leq R(\Hh_1,\Hh_2)$ if each $\Hh_i$ is a non-empty subset of $\Gg_i$. Thus
\begin{equation} \label{ub}
R(\Gg_1,\Gg_2) \leq R(G_1,G_2)
\end{equation}
if each $G_i \in \Gg_i$. In particular, $R(\Gg_1,\Gg_2)$ always exists. Clearly, $R(\Gg_1,\Gg_2)=R(\Gg_2,\Gg_1)$.

\medskip

Let $G$ be a red-blue graph. Then $G$ is called \emph{$(\Gg_1,\Gg_2)$-avoiding} if $G$ contains neither a red subgraph from $\Gg_1$ nor a blue subgraph from $\Gg_2$, and \emph{$(\Gg_1,\Gg_2)$-critical} if, moreover, $G$ has $R(\Gg_1,\Gg_2)-1$ vertices. The \emph{red subgraph} $\Gr$ of $G$ is the (uncoloured) graph $(V(G),\{e \in E(G) \mid \h{$e$ is red}\})$; the \emph{blue subgraph} $\Gb$ is defined analogously. When we say that $G$ is red hamiltonian, blue bipartite, etc., we mean that $\Gr$ is hamiltonian, $\Gb$ is bipartite, and so on. Further terminology will be introduced in Section~\ref{notation}.

Let $(\Ga_1,\Ga_2)$ be a pair of non-empty sets of cycles. The main results of this paper can be divided into two groups: computation of generalised Ramsey numbers $\RGa$ on the one hand, and characterisation of $(\Ga_1,\Ga_2)$-avoiding and $(\Ga_1,\Ga_2)$-critical graphs on the other.

\subsection{Previous results} \label{prev}

We first present some results which, to the best of the author's knowledge, include all previously known generalised Ramsey numbers for two sets of cycles. To this end, and for stating the main results of this paper, let $\C$, $\C_o$, $\C_e$, and $\Cc$ denote the set of all cycles, the set of all odd cycles, the set of all even cycles, and the set of all pairs of non-empty sets of cycles, respectively. Also, for each integer $m \geq 3$, let $\C_{\leq m}=\{C_k \mid k \leq m\}$ and $\C_{\geq m}=\{C_k \mid k \geq m\}$. Furthermore, given $\Ga \seq \C$, let
\[
\min(\Ga)=\begin{cases}\min\{k \mid C_k \in \Ga\} & \h{if $\Ga$ is non-empty,} \\ \infty & \h{otherwise,}\end{cases}
\]
and if $(\Ga_1,\Ga_2) \in \Cc$, then for each $i \in [2]$, let $\ga^i=\min(\Ga_i)$ and $\ga_e^i=\min(\Ga_i \cap \C_e)$.

When $|\Ga_1|=|\Ga_2|=1$, the (ordinary) Ramsey numbers $\RGa$ were determined independently by Rosta~\cite{Rosta} and by Faudree and Schelp~\cite{F-S1}. A new proof, simpler but still quite technical and detailed, was given by K\'arolyi and Rosta~\cite{K-R}. Here we state the theorem in a way that will be useful to us later. To this end, let $\De_1$, $\De_2$, and $\De_3$ be the sets of all pairs $(n,k)$ of integers with $n \geq k \geq 3$, such that
\begin{itemize}
  \item[] $\De_1$: $k \equiv 0$ and either $0 \equiv n \geq 6$ or $n \geq 3k/2$;
  \item[] $\De_2$: $k \equiv 0$ and $1 \equiv n \leq 3k/2$;
  \item[] $\De_3$: $k \equiv 1$ and $n \geq 4$.
\end{itemize}
Here, and henceforth, all congruences are modulo $2$.

\begin{Thm}[\mbox{\cite{Rosta} and \cite{F-S1}}] \label{HS}
Let $n \geq k \geq 3$. Then
\[
R(C_n,C_k)=\begin{cases}6 & \h{if $(n,k) \in \{(3,3),(4,4)\}$,} \\ n+k/2-1 & \h{if $(n,k) \in \De_1$,} \\ 2k-1 & \h{if $(n,k) \in \De_2$,} \\ 2n-1 & \h{if $(n,k) \in \De_3$.}\end{cases}
\]
\end{Thm}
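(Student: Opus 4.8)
The plan is to prove matching lower and upper bounds in each of the four regimes, handling the two sporadic pairs $(3,3)$ and $(4,4)$ by hand. For the lower bounds I would exhibit, in each regime, a red-blue graph on one fewer vertex than the claimed value that contains no red $C_n$ and no blue $C_k$. For $(n,k)\in\De_1$, partition $n+k/2-2$ vertices as $A\sqcup B$ with $|A|=n-1$ and $|B|=k/2-1$, colour the edges inside $A$ red and all remaining edges blue: the red subgraph is $K_{n-1}$ plus isolated vertices, so its longest cycle has length $n-1<n$, while every blue edge meets $B$, so $B$ is a vertex cover of the blue subgraph and every blue cycle has length at most $2|B|=k-2<k$. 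For $(n,k)\in\De_2$ (here $n$ is odd), split $2k-2$ vertices into two sets of size $k-1$, colour the edges inside each set blue and the crossing edges red: each blue component is too small to contain $C_k$, and the red subgraph is bipartite, hence has no odd cycle and in particular no $C_n$. For $(n,k)\in\De_3$ ($k$ odd) I would swap the roles, taking two red cliques of size $n-1$ with all crossing edges blue, so the red components are too small for $C_n$ and the bipartite blue subgraph excludes the odd cycle $C_k$. Finally, for $(3,3)$ and $(4,4)$, colouring a $C_5$ red and its complement (again a $C_5$) blue avoids monochromatic $C_3$ and $C_4$, giving $R\ge 6$ in both cases.

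For the upper bounds I would show that every red-blue graph $G$ on $N$ vertices, with $N$ the claimed value, contains a red $C_n$ or a blue $C_k$; suppose not. The core tools are the Erd\H{o}s--Gallai circumference bound (a graph on $m$ vertices with no cycle of length exceeding $L$ has at most $L(m-1)/2$ edges) together with weak-pancyclicity results in the spirit of Bondy and Brandt (a sufficiently dense non-bipartite graph contains a cycle of every length between its girth and its circumference). The first reduction is a counting argument: if the red circumference were below $n$ and the blue circumference below $k$, the Erd\H{o}s--Gallai bounds would give at most $(n+k-2)(N-1)/2$ edges in total, which is strictly less than $\binom{N}{2}$ precisely when $N\ge n+k-1$; this disposes of the case $(n,k)\in\De_3$ at once, since there $N=2n-1\ge n+k-1$. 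Hence the red subgraph has a cycle of length at least $n$ (or the blue one a cycle of length at least $k$); if that colour class is also non-bipartite and dense enough to be weakly pancyclic with small girth, it realises the exact length $n$ (respectively $k$), a contradiction. The remaining possibility is that the dense colour class is essentially bipartite, and one then checks that the only $(\{C_n\},\{C_k\})$-avoiding near-bipartite configurations are the extremal constructions above, which live only on $N-1$ vertices.

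The hard part will be the boundary analysis for the even-$k$ regimes. The simple counting reduction fails in $\De_1$ and in the off-diagonal part of $\De_2$, where $N$ is too small for the total-edge bound to bite; there one must argue structurally, exploiting the vertex-cover picture of the $\De_1$ construction and tracking exactly which cycle lengths a dense near-bipartite graph can realise. This is also where the weak-pancyclicity machinery is genuinely delicate: a monochromatic subgraph may contain cycles both shorter and longer than the target length while missing that length exactly, and ruling this out is precisely the point at which the parities of $n$ and $k$ and the position of $n$ relative to $3k/2$ enter, separating $\De_1$ from $\De_2$. The sporadic values at $(3,3)$ and $(4,4)$ show that the generic argument really does break down for the smallest orders and must be settled directly (e.g.\ via $R(K_3,K_3)=6$). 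A fully self-contained treatment along these lines is long and technical, which is why in what follows I would invoke the theorem in the form proved by Rosta and by Faudree and Schelp.
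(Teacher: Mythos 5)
You should first note that the paper does not prove this theorem at all: it is quoted from Rosta \cite{Rosta} and Faudree--Schelp \cite{F-S1}, and the paper's only commentary on its proof is the remark that Colourings~3, 4, 5, and 6 furnish the lower bounds. Your treatment is therefore consistent with the paper's exactly where it overlaps. Your lower-bound constructions are precisely those colourings: your $\De_2$ and $\De_3$ graphs are Colourings~5 and 6 verbatim, your $(3,3)$/$(4,4)$ graph is Colouring~3, and your $\De_1$ graph is a harmless variant of Colouring~4 (you make $B$ a blue clique rather than a red one; your vertex-cover argument correctly caps blue cycles at length $2|B|=k-2$, and the red subgraph is still just $K_{n-1}$). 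All four verifications are sound, including the parity uses ($n$ odd in $\De_2$, $k$ odd in $\De_3$).

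Your upper-bound plan, read as a proof, has a genuine gap, but you name it yourself. The Erd\H{o}s--Gallai count correctly shows that for $N\geq n+k-2+1$ some colour class has circumference at least $n$ (red) or $k$ (blue) --- your computation $\binom{N}{2}>(n+k-2)(N-1)/2 \Leftrightarrow N\geq n+k-1$ is right, and the applicability to $\De_3$ is right --- but it does not ``dispose of'' any case: it locates a long cycle without locating \emph{density} in that colour class, and Brandt-type weak pancyclicity needs both density and non-bipartiteness of that specific class. The extremal colourings are exactly the near-bipartite configurations where this fails, and converting ``cycle of length $\geq n$'' into ``cycle of length exactly $n$'' while excluding them is the entire technical content of the original proofs; it is also the flavour of the K\'arolyi--Rosta simplification that the paper itself imports as Lemmas~\ref{Lemma 3.1}--\ref{Lemma 3.3} (cycle-length descent and exact-length realisation from long cycles). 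Since you close by invoking the theorem in the form proved by Rosta and by Faudree and Schelp --- which is precisely what the paper does --- your proposal matches the paper's treatment: correct and complete on the lower bounds, honest citation on the upper bounds, with the sketch in between being a reasonable but unfinished roadmap rather than a proof.
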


Furthermore, we have the following two results of Erd\H{o}s, Faudree, Rousseau, and Schelp, from which we obtain \Co~\ref{res}.

\begin{Thm}[\mbox{\cite[\Th~3]{E-F-R-S}}] \label{E-F-R-S}
For all $m>n \geq 2$,
\[
R(\C_{\leq m},\{K_n\})=\begin{cases}2n & \h{if $n<m<2n-1$,} \\ 2n-1 & \h{if $m \geq 2n-1$.}\end{cases}
\]
\end{Thm}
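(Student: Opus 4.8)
The plan is to translate the statement into a question about a single uncoloured graph. A red-blue graph on $N$ vertices is $(\C_{\le m},\{K_n\})$-avoiding exactly when its red subgraph $\Gr$ has no cycle of length at most $m$ (equivalently, $\Gr$ has girth $>m$, forests having infinite girth) and its blue subgraph $\Gb$ has no $K_n$; since $\Gb$ is the complement of $\Gr$, the latter says precisely that $\Gr$ has no independent set of size $n$, i.e.\ $\alpha(\Gr)\le n-1$. Hence $R(\C_{\le m},\{K_n\})-1$ is the largest order of a graph $G$ with girth $>m$ and $\alpha(G)\le n-1$, and the theorem reduces to computing this quantity. For the lower bounds I would exhibit such graphs directly: a disjoint union of $n-1$ edges (a matching on $2n-2$ vertices) is acyclic and has $\alpha=n-1$, giving $R\ge 2n-1$ for every $m$; and when $m<2n-1$ the cycle $C_{2n-1}$ has girth $2n-1>m$ and $\alpha=n-1$, giving $R\ge 2n$.

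For the upper bound when $m\ge 2n-1$, observe that on $2n-1$ vertices every cycle has length at most $2n-1\le m$, so girth $>m$ forces $G$ to be acyclic; a forest is bipartite, whence $\alpha(G)\ge\lceil(2n-1)/2\rceil=n$, contradicting $\alpha(G)\le n-1$. Thus no avoiding graph has $2n-1$ vertices, so $R\le 2n-1$ and equality holds. The substantive case is $n<m<2n-1$, where it remains to prove the following: every graph $G$ on $2n$ vertices with girth $>m$ (hence girth $\ge n+2$) satisfies $\alpha(G)\ge n$.

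I would prove this by a deficiency count. Set $\mathrm{def}(G)=2n-2\alpha(G)$, which is an even integer, and note that $\mathrm{def}(G)=\sum_i\bigl(|V_i|-2\alpha(G_i)\bigr)$ summed over the components $G_i$. Every bipartite component satisfies $\alpha(G_i)\ge|V_i|/2$ and so contributes at most $0$. Since each cycle of $G$ has at least $n+2$ vertices, two vertex-disjoint cycles would require at least $2n+4>2n$ vertices; therefore at most one component is non-bipartite. If this lone non-bipartite component $H$ can be shown to satisfy $\alpha(H)\ge(|V(H)|-1)/2$, i.e.\ $\mathrm{def}(H)\le 1$, then $\mathrm{def}(G)\le 1$, and being even it is $\le 0$, which is exactly $\alpha(G)\ge n$.

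The main obstacle is thus the bound $\mathrm{def}(H)\le 1$ for the single non-bipartite, high-girth component $H$. The odd cycle shows $\mathrm{def}=1$ is attained, so the claim is sharp and the argument must be structural rather than a counting estimate. I would establish it by exhibiting a vertex $v$ of $H$ lying on \emph{every} cycle of $H$: then $H-v$ is a forest, hence bipartite, so $\alpha(H)\ge\alpha(H-v)\ge\lceil(|V(H)|-1)/2\rceil$, giving $\mathrm{def}(H)\le 1$. To locate such a $v$, take a shortest cycle $C$ of $H$; it is chordless and has length $\ge n+2$. Because $H$ contains no two vertex-disjoint cycles, the vertices outside $C$ induce a forest, and because $|V(H)|\le 2n$ each outside vertex has at most one neighbour on $C$ (two neighbours on $C$ would, together with the bound $|V(H)|\le 2n$, contradict the girth hypothesis). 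This rigidity should force all cycles of $H$ through one common vertex. Making this final step fully rigorous — precisely controlling how the attached forest meets $C$ — is where the real work lies; it is essentially the large-girth case of Lovász's classification of graphs without two vertex-disjoint cycles, for which the only surviving configurations do have a vertex meeting every cycle.
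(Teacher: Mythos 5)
First, a point of comparison: the paper does not prove \Th~\ref{E-F-R-S} at all --- it is quoted from Erd\H{o}s, Faudree, Rousseau, and Schelp --- so your argument can only be judged on its own terms. Most of it holds up: the reduction to a single uncoloured graph (girth $>m$ and $\alpha \le n-1$), both lower-bound constructions, the case $m \ge 2n-1$, and the deficiency bookkeeping for $n<m<2n-1$ (at most one non-bipartite component, parity of $2n-2\alpha(G)$, reduction to $\mathrm{def}(H)\le 1$) are all correct.

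The gap is exactly where you flagged it, and the justification you offer there is genuinely false. It is \emph{not} true that a graph of large girth with no two vertex-disjoint cycles must have a vertex meeting every cycle: Lov\'asz's classification is up to subdivisions and attached forests, and a subdivided wheel with three or more spokes has arbitrarily large girth, no two vertex-disjoint cycles, and no vertex on all cycles (the rim avoids the hub, and every rim or spoke vertex is avoided by a hub cycle through two other spokes); subdivided $K_{3,p}$ behaves the same way. So ``the large-girth case of Lov\'asz's classification'' does not deliver your lemma. What rescues the step is the quantitative vertex budget you stopped using at the crucial moment: with girth $g \ge n+2$ and $|V(H)| \le 2n \le 2g-4$, any two cycles of $H$ must share at least two vertices (sharing at most one vertex forces at least $2g-1 > 2n$ vertices), so all cycles lie in a single block $B$. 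Take an ear decomposition of $B$ starting from a shortest cycle: the first ear, split by its endpoints into arcs $a+b=g$ with $a \le b$, must have length at least $g-a \ge g/2$, so already $3g/2-1$ vertices are used; a routine count then shows neither a chord nor a second ear of length $q$ fits, since its endpoints would have to lie at distance at least $g-q$ in the theta, which the remaining budget makes impossible. Hence $B$ is a cycle or a theta graph, and in both cases there are even \emph{two} vertices lying on every cycle, so deleting one leaves a forest and $\mathrm{def}(H) \le 1$ follows as you intended. (Consistently, a three-spoke subdivided wheel of girth $g$ needs at least $2g-2$ vertices, which is precisely what your budget excludes.) With this counting argument replacing the appeal to the classification, your proof is complete.
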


\begin{Thm}[\mbox{\cite[\Th~2]{F-S2}}] \label{F-S2}
For all $m \geq 3$ and all $n \geq 2$,
\[
R(\C_{\geq m},\{K_n\})=(m-1)(n-1)+1.
\]
\end{Thm}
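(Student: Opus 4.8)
The plan is to prove the two inequalities separately, with the bulk of the work isolated in a single graph-theoretic lemma about circumference and independence number. For the lower bound $R(\C_{\geq m},\{K_n\}) \geq (m-1)(n-1)+1$, I would exhibit a $(\C_{\geq m},\{K_n\})$-avoiding red-blue graph on $(m-1)(n-1)$ vertices. Partition the vertex set into $n-1$ classes of size $m-1$, and colour an edge red if its two endpoints lie in the same class and blue otherwise. Then $\Gr$ is a disjoint union of $n-1$ copies of $K_{m-1}$, so every red cycle lies inside a single class and has length at most $m-1$; meanwhile $\Gb$ is complete $(n-1)$-partite, so its largest clique has only $n-1<n$ vertices. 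Hence this graph contains no red member of $\C_{\geq m}$ and no blue $K_n$, witnessing $R(\C_{\geq m},\{K_n\}) > (m-1)(n-1)$.

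For the upper bound, set $N=(m-1)(n-1)+1$ and take any red-blue graph $G$ on $N$ vertices with no blue $K_n$. A blue $K_n$ is precisely a set of $n$ vertices spanning no red edge, i.e.\ an independent set of size $n$ in $\Gr$; so the hypothesis says the independence number satisfies $\alpha(\Gr)\le n-1$. The theorem therefore reduces to the following lemma applied to $H=\Gr$: \emph{if a graph $H$ has no cycle of length at least $m$, then $|V(H)|\le (m-1)\alpha(H)$.} Granting this, $N=|V(\Gr)|\le(m-1)\alpha(\Gr)\le(m-1)(n-1)<N$ is impossible, so $\Gr$ must contain a cycle of length at least $m$, i.e.\ $G$ contains a red member of $\C_{\geq m}$.

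I would prove the lemma by induction on $|V(H)|$, the empty graph being trivial. Choose a longest path $P=v_0 v_1 \cdots v_\ell$ in $H$. Since $P$ cannot be extended, every neighbour of $v_0$ lies on $P$; moreover every neighbour $v_i$ of $v_0$ satisfies $i\le m-2$, since otherwise $v_0 v_1\cdots v_i v_0$ would be a cycle of length $i+1\ge m$. Thus all neighbours of $v_0$ lie among $v_1,\ldots,v_{m-2}$. Put $S=\{v_0,v_1,\ldots,v_{\min(\ell,\,m-2)}\}$, so that $1\le|S|\le m-1$ and $v_0$ has no neighbour outside $S$. Deleting $S$, any maximum independent set of $H-S$ can be enlarged by $v_0$, giving $\alpha(H)\ge\alpha(H-S)+1$. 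Combined with the inductive bound $|V(H-S)|\le(m-1)\alpha(H-S)$, this yields $|V(H)|=|V(H-S)|+|S|\le(m-1)\alpha(H-S)+(m-1)\le(m-1)\alpha(H)$, closing the induction.

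The construction and the reduction are routine; the heart of the argument is the lemma, whose only delicate point is arranging that deleting a block of at most $m-1$ vertices lowers the independence number by at least one. The longest-path endpoint $v_0$ is exactly the device that makes this work: because all of its neighbours are confined to the first $m-1$ vertices of $P$, after deleting $S$ it becomes free to extend any independent set. I expect the main obstacle to be phrasing this cleanly enough that the single induction simultaneously absorbs disconnected graphs, acyclic graphs, and the case of a short longest path, with no need to treat the connected components separately, since taking a \emph{globally} longest path in $H$ handles all of these uniformly.
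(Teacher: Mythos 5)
Your proposal is correct, but there is nothing in the paper to compare it against: the paper states this theorem only as an imported result, citing \cite[Th.~2]{F-S2}, and gives no proof of it. Judged on its own merits, your argument is complete. The lower-bound colouring (red graph a disjoint union of $n-1$ copies of $K_{m-1}$, blue graph complete $(n-1)$-partite) is the standard extremal construction and does exactly what you claim. The reduction of the upper bound to the lemma ``no cycle of length at least $m$ implies $|V(H)|\le(m-1)\alpha(H)$'' is clean, since a blue $K_n$ in a red-blue graph is precisely an independent $n$-set in $\Gr$. The lemma itself is a classical folklore bound (circumference times independence number dominates the order), and your induction is sound: taking a \emph{globally} longest path $P=v_0v_1\cdots v_\ell$ forces all neighbours of $v_0$ onto $P$, the no-long-cycle hypothesis confines them to $v_1,\ldots,v_{m-2}$, so deleting $S=\{v_0,\ldots,v_{\min(\ell,m-2)}\}$ removes at most $m-1$ vertices while leaving $v_0$'s entire neighbourhood inside $S$, whence any maximum independent set of $H-S$ extends by $v_0$ and $\alpha(H)\ge\alpha(H-S)+1$. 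The edge cases (disconnected $H$, forests, $\ell<m-2$, the base case) are all absorbed uniformly, as you anticipated, and the final chain $(m-1)(n-1)+1\le(m-1)\alpha(\Gr)\le(m-1)(n-1)$ gives the contradiction. Whether this coincides with the original Faudree--Schelp argument cannot be checked from this paper, but as a self-contained proof of the stated identity it is valid and pleasantly economical.
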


\begin{Cor} \label{res}
For all $m \geq 3$,
\[
R(\C_{\leq m},\{C_3\})=\begin{cases}6 & \h{if $m \leq 4$,} \\ 5 & \h{if $m \geq 5$,}\end{cases}
\]
and
\[
R(\C_{\geq m},\{C_3\})=2m-1.
\]
\end{Cor}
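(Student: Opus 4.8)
The plan is to derive both formulas by specialising the two theorems of Erd\H{o}s, Faudree, Rousseau, and Schelp to $n=3$, using that $C_3=K_3$, so that $\{C_3\}=\{K_3\}$ and hence $R(\C_{\leq m},\{C_3\})=R(\C_{\leq m},\{K_3\})$ and $R(\C_{\geq m},\{C_3\})=R(\C_{\geq m},\{K_3\})$.

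For the second formula this is immediate. \Th~\ref{F-S2} applies for every $m \geq 3$ with $n=3 \geq 2$, and directly yields
\[
R(\C_{\geq m},\{C_3\})=(m-1)(3-1)+1=2m-1,
\]
as claimed, with no case distinction needed.

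For the first formula I would split according to the size of $m$. When $m \geq 4$, the hypothesis $m>n=3$ of \Th~\ref{E-F-R-S} is satisfied, so I may substitute $n=3$ there. The condition $n<m<2n-1$ becomes $3<m<5$, i.e.\ $m=4$, giving $R=2n=6$; the condition $m \geq 2n-1$ becomes $m \geq 5$, giving $R=2n-1=5$. This settles the value $6$ for $m=4$ and the value $5$ for all $m \geq 5$.

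The only case lying outside the scope of \Th~\ref{E-F-R-S} is $m=3$, since that theorem requires the strict inequality $m>n=3$. Here I would argue directly: the only cycle of length at most $3$ is $C_3$, so $\C_{\leq 3}=\{C_3\}$ and therefore $R(\C_{\leq 3},\{C_3\})=R(C_3,C_3)$, which equals $6$ by the $(n,k)=(3,3)$ case of \Th~\ref{HS}. Combined with the value $6$ obtained above for $m=4$, this gives $6$ for all $m \leq 4$, completing the proof. There is no genuine obstacle in this corollary; the single point requiring care is the boundary value $m=3$, which must be treated via the ordinary Ramsey number of \Th~\ref{HS} rather than via \Th~\ref{E-F-R-S}.
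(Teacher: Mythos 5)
Your proposal is correct and is essentially the paper's own (implicit) derivation: Corollary~\ref{res} is stated there as a direct specialisation of Theorems~\ref{E-F-R-S} and~\ref{F-S2} to $n=3$ via $C_3=K_3$, exactly as you do. Your explicit treatment of the boundary case $m=3$ through $\C_{\leq 3}=\{C_3\}$ and $R(C_3,C_3)=6$ is the right way to fill the one gap left by the hypothesis $m>n$ of Theorem~\ref{E-F-R-S}.
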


\medskip

Let us turn to a structural result, due to Wu, Sun, and Radziszowski~\cite{W-S-R}. In order to state it, we have to define some sets of graphs. Given a graph $G$, a vertex $v \in V(G)$, a subset $U \seq V(G)$, and an edge $e \in E(G)$, let $N(v)$, $d(v)$, $G[U]$, and $G-e$ denote the set of neighbours of $v$, the number of neighbours of $v$, the induced subgraph on $U$, and the graph $G$ with the edge $e$ deleted, respectively. We say that $G$ has a \emph{matching on $U$} if each vertex in $U$ has degree at most $1$ in $G[U]$.

\begin{Defn}
Let $n \geq 6$. For each $i \in [3]$, let $\Gg_i$ be a set of graphs on $\{v\} \cup X \cup \{y\}$ with $X=\{x_1,\p,x_{n-2}\}$, as follows:
\begin{itemize}
  \item $\Gg_1$ consists of the graphs with $N(v)=X$, having a matching on $X \cup \{y\}$;
  \item $\Gg_2$ consists of the graphs with $N(v)=X$, $N(y)=\{x_{n-2}\}$, and $d(x_{n-2})=3$, having a matching on $X$;
  \item $\Gg_3$ consists of the graphs with $N(v)=X \cup \{y\}$, having a matching on $X \cup \{y\}$.
\end{itemize}
See \cite[Figure~1]{W-S-R} for pictures illustrating these graph sets.
\end{Defn}

We can now state the structural result:

\begin{Thm}[\mbox{\cite[\Th~1]{W-S-R}}] \label{W-S-R}
Let $n \geq 6$. Then $G$ is $(C_n,C_4)$-critical if and only if $\Gb \in \Gg_1 \cup \Gg_2 \cup \Gg_3$.
\end{Thm}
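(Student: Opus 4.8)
The plan is to read criticality structurally. Since $(n,4)\in\De_1$ for every $n\ge 6$, \Th~\ref{HS} gives $R(C_n,C_4)=n+1$, so a red-blue graph $G$ is $(C_n,C_4)$-critical exactly when it has $n$ vertices, $\Gb$ is $C_4$-free, and $\Gr$ is non-hamiltonian. The \emph{if} direction is then a direct verification of these three conditions for each family. Each family has $n=1+(n-2)+1$ vertices, and in each the distinguished vertex $v$ has red-degree at most $1$ (degree $0$ in $\Gg_3$, only the edge $vy$ in $\Gg_1$ and $\Gg_2$), so $v$ lies on no red cycle and $\Gr$ is non-hamiltonian; a short check that no two vertices ever have two common blue neighbours shows $\Gb$ is $C_4$-free. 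As an avoiding graph on $R(C_n,C_4)-1=n$ vertices is automatically critical, the three families consist of critical graphs.

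For the \emph{only if} direction I would first extract a low-connectivity witness. A blue $C_4$-free graph contains no blue $K_4$, so $\alpha(\Gr)=\omega(\Gb)\le 3$; since $\Gr$ is non-hamiltonian, the Chv\'atal--Erd\H{o}s theorem forces $\kappa(\Gr)<\alpha(\Gr)\le 3$, i.e.\ $\kappa(\Gr)\le 2$. Fix a minimum vertex cut $S$. Every edge between two distinct components of $\Gr-S$ is blue, so two components each of size at least $2$ would give a blue $K_{2,2}=C_4$; hence at most one component is big. Likewise two singleton components $t_1,t_2$ would both be blue-adjacent to all of $V(G)\setminus(S\cup\{t_1,t_2\})$, a set of size at least $n-4\ge 2$, again producing a blue $C_4$; so there is at most one singleton. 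As $\Gr-S$ is disconnected, it therefore consists of exactly one big component $H$ and a single extra vertex $t$, and $\deg_{\Gb}(t)\ge n-1-|S|$.

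This pins down $\Gb$ by the value of $|S|$. If $|S|=0$ then $t$ is red-isolated, so $\deg_{\Gb}(t)=n-1$; writing $v=t$, the impossibility of two common blue neighbours forces $\Gb[V(G)\setminus\{v\}]$ to be a matching, which is precisely $\Gg_3$. If $|S|=1$, let $y$ be the cut vertex and $v=t$; then $v$ has red-degree $1$ and $N_{\Gb}(v)=X:=V(G)\setminus\{v,y\}$, and since $v$ is blue-complete to $X$, no two vertices of $X$ share a further blue neighbour. A short analysis of $\Gb[X\cup\{y\}]$ — distinguishing whether $y$ has a blue neighbour $x^\ast\in X$ and whether $x^\ast$ is itself matched inside $X$ — yields either a matching on $X\cup\{y\}$ (family $\Gg_1$) or a vertex $x^\ast$ with $N_{\Gb}(y)=\{x^\ast\}$ and $\deg_{\Gb}(x^\ast)=3$ (family $\Gg_2$).

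The crux, and the step I expect to be most delicate, is ruling out $|S|=2$. Here $t$ must be red-adjacent to both vertices of $S=\{s_1,s_2\}$, since otherwise a single vertex would already separate $t$, contradicting $\kappa(\Gr)=2$; thus $t$ is blue-complete to $H$ with $|H|=n-3$. Testing pairs against the common blue neighbour $t$ forces $\Gb[H]$ to be a matching and each $s_i$ to have at most one blue neighbour in $H$, so $\Gr[H]$ is $K_{n-3}$ with a matching removed. Such a graph has a hamiltonian path between any two prescribed vertices once $n-3\ge 5$ (an Ore-type bound), and the cases $n\in\{6,7\}$ are checked directly. Choosing $a,b\in H$ with $a\ne b$, $s_2a$ and $s_1b$ red (possible since each $s_i$ misses at most one vertex of $H$), and a red hamiltonian $a$–$b$ path in $H$, I splice it with $s_2a$, $bs_1$ and $s_1ts_2$ into a red hamiltonian cycle of $G$, contradicting non-hamiltonicity. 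Hence $|S|\le 1$, and the cases above give $\Gb\in\Gg_1\cup\Gg_2\cup\Gg_3$.
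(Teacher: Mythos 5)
You cannot be compared against ``the paper's own proof'' here, because the paper has none: \Th~\ref{W-S-R} is imported verbatim from Wu, Sun, and Radziszowski \cite{W-S-R} and used as a black box (to deduce \Th~\ref{struc4}). So yours is necessarily an independent derivation, and it is essentially correct. Your reduction of criticality to the three conditions ($n$ vertices, $\Gb$ has no $C_4$, $\Gr$ non-hamiltonian) is right, since $(n,4)\in\De_1$ gives $R(C_n,C_4)=n+1$, and the \emph{if} direction is the routine verification anyone must do. The substance is the \emph{only if} direction, where you replace the direct colouring analysis of \cite{W-S-R} by a conceptual chain: a blue $K_4$ contains a blue $C_4$, so $\alpha(\Gr)=\omega(\Gb)\le 3$; Chv\'atal--Erd\H{o}s then forces $\kappa(\Gr)\le 2$; and the ``two vertices with two common blue neighbours'' criterion for a blue $C_4$ correctly collapses $\Gr-S$ to one large component plus one singleton $t$ (two big components give a blue $K_{2,2}$, two singletons give two common blue neighbours since $n-4\ge 2$). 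The cases $|S|=0$ and $|S|=1$ then read off exactly $\Gg_3$ and $\Gg_1\cup\Gg_2$, including the degenerate $\Gg_1$ case where $y$ is blue-isolated. This is in the same structural spirit as the paper's own treatment of odd second parameter (\Pro~\ref{avoiding} and Lemma~\ref{Lemma X}, where a hamiltonian-versus-bipartite dichotomy drives everything), and what it buys is brevity and transparency; what the hands-on approach of \cite{W-S-R} buys is that it needs no connectivity machinery and extends to their star-critical computations.

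Two compressed steps deserve to be written out, though both do close. First, in the case $|S|=1$ you need $vy$ red (so that $N_{\Gb}(v)=X$ exactly); this follows because $\kappa(\Gr)=1$ means $\Gr$ is connected, but it should be said. Second, in eliminating $|S|=2$: the Ore-type criterion ($d(u)+d(v)\ge m+1$ for nonadjacent $u,v$ implies Hamilton-connectivity) applied to $K_m$ minus a matching gives $2(m-2)\ge m+1$, i.e.\ it needs $m=n-3\ge 5$, so $n\ge 8$, and the small cases are genuinely not automatic: for $n=7$, $K_4$ minus a matching fails Hamilton-connectivity exactly for the matched pairs (when the red graph is a $C_4$), resp.\ for the two unmatched vertices (when it is $K_4-e$), and the freedom that each $s_i$ misses at most one vertex of $H$ suffices to steer $a,b$ away from the bad pair; for $n=6$, $K_3$ minus a blue edge $pq$ has a red Hamilton path only between $p$ and $q$, and to get $s_ip$ and $s_jq$ red with $i\ne j$ you must combine $\delta(\Gr)\ge\kappa(\Gr)=2$ (each of $p,q$ has a red edge into $S$) with the fact that each $s_i$ has at most one blue neighbour in $H$. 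With these details supplied, your argument is a complete and correct proof of the cited theorem.
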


\subsection{Main results} \label{main}

Recall that $\ga^i$ and $\ga_e^i$ denote the length of the shortest cycle and the length of the shortest even cycle, respectively, in $\Ga_i$. We shall later (in Subsection~\ref{redbluenumbers}) define a number $\mm=\mGa$, and we shall prove that
\[
\mm=\max\!\big(5,\min(\ga^2+\ga_e^1/2-1,2\ga^2-1),\min(\ga^1+\ga_e^2/2-1,2\ga^1-1)\big).
\]
Recall also that $\Cc$ is the set of all pairs of non-empty sets of cycles, and let $\mathscr{C}=\{(\Ga_1,\Ga_2) \in \Cc \mid \h{$C_3$ or $C_4 \in \Ga_1 \cap \Ga_2$ and $C_3$ or $C_5 \notin \Ga_1 \cup \Ga_2$}\}$. Observe that $\mGa=5$ if $(\Ga_1,\Ga_2) \in \mathscr{C}$.

We are now ready to state the first of two main results of this paper:

\begin{Thm} \label{MT}
Let $(\Ga_1,\Ga_2) \in \Cc$. Then
\begin{equation} \label{eq:MT}
\RGa \geq \begin{cases}\mGa+1 & \h{if $(\Ga_1,\Ga_2) \in \mathscr{C}$,} \\ \mGa & \h{otherwise.}\end{cases}
\end{equation}
Moreover, equality holds if either $\min(\Ga_1 \cup \Ga_2) \leq 6$ or $(\Ga_1,\Ga_2) \in \Cc_1 \cup \Cc_2$.
\end{Thm}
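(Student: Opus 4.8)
The plan is to treat the two assertions separately. For the lower bound \eqref{eq:MT} I would exhibit, for each of the three arguments of the outer maximum in $\mm=\mGa$, a $(\Ga_1,\Ga_2)$-avoiding red-blue graph whose order is that argument minus $1$; since any avoiding graph has at most $\RGa-1$ vertices, $\RGa$ is then at least each argument, hence at least their maximum $\mm$. When $(\Ga_1,\Ga_2)\in\mathscr{C}$ I would gain one further vertex from a separate colouring of $K_5$. For the equality I would then prove the matching upper bound: every red-blue graph on $\mm$ vertices (on $\mm+1$ vertices when $(\Ga_1,\Ga_2)\in\mathscr{C}$) contains a red subgraph from $\Ga_1$ or a blue subgraph from $\Ga_2$.

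For the lower bound the term $5$ is realised by splitting $K_4$ into two paths, so that both colour classes are forests; this gives $\RGa\ge 5$ unconditionally. For $\min(\ga^1+\ga_e^2/2-1,\,2\ga^1-1)$ I would take two \emph{red} cliques of sizes $p$ and $q$ with all edges between them \emph{blue}: then $\Gr$ is a disjoint union of two cliques, with longest cycle $\max(p,q)$, while $\Gb$ is a complete bipartite graph, hence bipartite with longest cycle $2\min(p,q)$. Choosing $\max(p,q)=\ga^1-1$ kills every red $\Ga_1$-cycle, and choosing $\min(p,q)=\ga_e^2/2-1$ ensures, $\Gb$ being bipartite, that no blue odd cycle occurs and every blue even cycle is shorter than $\ga_e^2$; optimising $p+q$ gives an avoiding graph on $\min(\ga^1+\ga_e^2/2-1,\,2\ga^1-1)-1$ vertices. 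Swapping the two colours yields the symmetric term. Finally, for $(\Ga_1,\Ga_2)\in\mathscr{C}$, where $\mm=5$, condition~B forces $C_5\notin\Ga_1\cup\Ga_2$ or $C_3\notin\Ga_1\cup\Ga_2$: in the first case the decomposition of $K_5$ into a red $5$-cycle and a blue $5$-cycle avoids both sets, and in the second a colouring of $K_5$ whose red class is a triangle with two pendant edges and whose blue class is another triangle with two pendant edges (together exhausting all ten edges) has the two triangles as its only monochromatic cycles, and so avoids $\Ga_1$ and $\Ga_2$.

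For the upper bound, the cleanest case is $\Cc_2$, where the shortest cycle in each set is even and hence $\ga_e^i=\ga^i$: here \eqref{ub} gives $\RGa\le R(C_{\ga^1},C_{\ga^2})$, which by the $\De_1$-case of \Th~\ref{HS} equals $\mm$ (the small exceptions, such as $(\ga^1,\ga^2)=(4,4)$, being exactly those that fall into $\mathscr{C}$ and carry the $+1$). The remaining hypotheses I would organise by $\min(\Ga_1\cup\Ga_2)\in\{3,4,5,6\}$, with $\Cc_1$ handled by extending the corresponding odd-cycle arguments to longer cycles. For $\min=6$ the controlling cycle is again even and the argument stays close to the $\Cc_2$ case; for $\min=4$ I would invoke the structural \Th~\ref{W-S-R}, lifting the characterisation of $(C_n,C_4)$-critical graphs from a single cycle to the sets $\Ga_1,\Ga_2$ and reading off that no avoiding graph survives on $\mm$ (or $\mm+1$) vertices; and for $\min=3$ I would combine \Co~\ref{res} with \Ths~\ref{E-F-R-S} and~\ref{F-S2}.

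The main obstacle is the upper bound when $\Ga_1\cup\Ga_2$ contains a short \emph{odd} cycle, that is $\min\in\{3,5\}$, because there $\RGa$ can be strictly smaller than \emph{every} ordinary Ramsey number $R(C_n,C_k)$ with $C_n\in\Ga_1$, $C_k\in\Ga_2$, so a reduction through \eqref{ub} and \Th~\ref{HS} alone does not suffice. For instance, with $\Ga_1=\Ga_2=\{C_3,C_5\}$ one has $\mm=5$ and $\RGa=5$, whereas every such $R(C_n,C_k)$ is at least $6$. Here I would instead argue structurally about near-extremal colourings: an avoiding graph on $\mm$ vertices would in this example have to be free of monochromatic triangles, but the essentially unique triangle-free $2$-colouring of $K_5$ is the red-$C_5$/blue-$C_5$ decomposition, which then contains a forbidden $5$-cycle. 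Pushing this kind of analysis through in the $\min=5$ case, where one must control the interaction between the forbidden short odd cycles and the even-cycle thresholds $\ga_e^i/2$, is where the real work lies; it is also what forces the $+1$ correction on $\mathscr{C}$ and keeps the theorem from being a routine corollary of \Th~\ref{HS}.
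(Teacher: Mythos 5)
Your lower bound is sound and is in substance identical to the paper's: your two-red-cliques construction is exactly Colourings~4 and~6 (the paper packages these as the bipartite Ramsey numbers $\Rr$ and $\Rb$ of \Pro~\ref{redblue} and invokes \eqref{R-comp}), your doubled path on $K_4$ is Colouring~1, and your two $K_5$-colourings for $(\Ga_1,\Ga_2)\in\mathscr{C}$ are Colourings~3 and~2. The genuine gaps are all in the upper bound, where your text is a plan rather than a proof, and the plan fails at concrete, identifiable points. First, you mischaracterise $\Cc_2$: ``shortest cycle in each set even'' is only the special case recorded in \Co~\ref{cor1}; clauses (ii) and (iii) in the definition of $\Cc_1,\Cc_2$ concern odd $\ga^1$, and for clause (iii) no reduction through \eqref{ub} to a single pair of cycles can possibly work. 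For instance, $\Ga_1=\{C_9,C_{10}\}$, $\Ga_2=\{C_8\}$ has $\mm=12$, while $R(C_9,C_8)=15$ and $R(C_{10},C_8)=13$ by \Th~\ref{HS}; the paper closes this case with the K\'arolyi--Rosta machinery (Lemmas~\ref{Lemma 3.1}, \ref{Lemma 2.1}, and \ref{Lemma 3.3}), proving $R(\{C_n,C_{n+1}\},\{C_k\})\leq n+k/2-1$, an idea absent from your sketch (``extending the odd-cycle arguments'' does not engage with it).

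Second, your $\min=3$ plan via \Co~\ref{res} and \Ths~\ref{E-F-R-S} and~\ref{F-S2} cannot succeed: those results treat only $(\C_{\leq m},\{K_n\})$ and $(\C_{\geq m},\{K_n\})$ and give nothing like $R(\{C_5\},\{C_3,C_4\})=6$, where again every single-pair bound is too weak ($R(C_5,C_4)=7$, $R(C_5,C_3)=9$). The paper's actual engine here is Lemma~\ref{Lemma X} (a red-blue graph with no blue $C_3$ is red hamiltonian or blue bipartite) combined with Lemma~\ref{chords} and Bondy's theorem, plus the separate analysis of \Pro~\ref{C4C3orC3C4} when $\ga^1=4$. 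Third, the $\min=5$ case, which you explicitly defer as ``where the real work lies,'' is the bulk of the theorem: \Pro~\ref{new} (a blue odd cycle but no blue $C_5$ forces red cycles of every length in $[6,|G|-2]$), with its three supporting lemmas and the maximal-cycle claim, together with the bespoke case analysis of \Pro~\ref{g1=g2=5}; acknowledging the obstacle does not discharge it, and your worked example ($\Ga_1=\Ga_2=\{C_3,C_5\}$ via the unique triangle-free colouring of $K_5$) covers only a single point of it. Fourth, $\min=6$ does not ``stay close'' to the even case: the pair $(\ga^1,\ga_e^1,\ga^2)=(7,10,6)$ has $\mm=10$ while $R(C_7,C_6)=11$ and $R(C_{10},C_6)=12$, and the paper disposes of it by a ten-vertex ad hoc argument built on $R(C_8,C_6)=10$ (\Pro~\ref{g1>=6g2=6}). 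Finally, a minor misdirection: invoking \Th~\ref{W-S-R} at $\min=4$ is both unnecessary (for $\ga^1\geq 6$, \eqref{ub} with $R(C_{\ga^1},C_4)=\ga^1+1=\mm$ already suffices, and $R(C_5,C_4)=7$ handles $\ga^1=5$) and conceptually off, since that theorem characterises critical graphs and does not by itself bound a Ramsey number.
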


We shall prove \Th~\ref{MT} in Subsection~\ref{numbers}. The definitions of $\Cc_1$ and $\Cc_2$ are slightly involved; they will be given before \Pro~\ref{C1C2}, in which they are needed. However, an easily stated special case is the following:

\begin{Cor} \label{cor1}
Equality holds in \eqref{eq:MT} for all pairs $(\Ga_1,\Ga_2) \in \Cc$ such that both $\ga^1$ and $\ga^2$ are even.
\end{Cor}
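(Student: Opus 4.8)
The plan is to deduce \Co~\ref{cor1} directly from \Th~\ref{MT}. The inequality in \eqref{eq:MT} is already supplied by that theorem, so the only task is to show that the hypothesis that $\ga^1$ and $\ga^2$ are both even forces $(\Ga_1,\Ga_2)$ into one of the two regimes in which \Th~\ref{MT} asserts \emph{equality}, namely $\min(\Ga_1 \cup \Ga_2) \leq 6$ or $(\Ga_1,\Ga_2) \in \Cc_1 \cup \Cc_2$. Before splitting into cases I would record two observations valid whenever $\ga^1,\ga^2$ are even. First, the shortest cycle in each $\Ga_i$ is then even, so $\ga_e^i=\ga^i$ and the formula for $\mm$ collapses: writing $a=\min(\ga^1,\ga^2)$ and $b=\max(\ga^1,\ga^2)$, a short check shows $\mm=\max(5,\,b+a/2-1)$ in every subcase, the governing term always being $b+a/2-1$ and never $2a-1$ or $2b-1$. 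Second, $\ga^1,\ga^2$ even forces $C_3 \notin \Ga_1 \cup \Ga_2$, so $(\Ga_1,\Ga_2) \in \mathscr{C}$ precisely when $C_4 \in \Ga_1 \cap \Ga_2$, i.e.\ precisely when $\ga^1=\ga^2=4$; this identifies which branch of \eqref{eq:MT} is being claimed and is consistent with $R(C_4,C_4)=6=\mm+1$ from \Th~\ref{HS}.

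I would then split on $a=\min(\Ga_1 \cup \Ga_2)$. Since $a$ is even and at least $4$, the case $a \leq 6$ means $a \in \{4,6\}$, and here equality is immediate from the first clause of the ``moreover'' in \Th~\ref{MT}. This already disposes of all pairs with $\ga^1=\ga^2=4$ (the $\mathscr{C}$ branch) and, more generally, of everything with a short cycle present. The remaining case is $a \geq 8$, i.e.\ both $\ga^1$ and $\ga^2$ even and at least $8$; here I would verify that $(\Ga_1,\Ga_2) \in \Cc_1 \cup \Cc_2$ and invoke the second clause of \Th~\ref{MT} to conclude $\RGa=\mm$. Note that in this regime $(\Ga_1,\Ga_2) \notin \mathscr{C}$, so the claimed right-hand side of \eqref{eq:MT} is indeed $\mm$.

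The main obstacle is exactly this last membership check. Everything preceding it is bookkeeping around \Th~\ref{MT}, but confirming that every pair with both shortest cycles even and $\min(\Ga_1 \cup \Ga_2)>6$ lies in $\Cc_1 \cup \Cc_2$ requires unwinding the deliberately involved definitions of $\Cc_1$ and $\Cc_2$ and matching their parity-and-size conditions against $\ga^1 \equiv \ga^2 \equiv 0$. I expect these definitions to be arranged so that the entire all-even regime (or at least its $a>6$ part) is contained in $\Cc_1 \cup \Cc_2$, making the verification routine but notation-heavy; logically, therefore, the corollary should be read as an immediate consequence once \Pro~\ref{C1C2} and the definitions preceding it are in place.
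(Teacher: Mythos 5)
Your strategy is exactly the intended one---the paper offers no separate proof of \Co~\ref{cor1}, treating it as immediate from the ``moreover'' clause of \Th~\ref{MT}---and your preliminary bookkeeping is accurate: since the shortest cycle in each $\Ga_i$ is even, $\ga_e^i=\ga^i$; the formula in \Co~\ref{maxredblue} does collapse to $\mm=\max(5,b+a/2-1)$ with $a=\min(\ga^1,\ga^2)$, $b=\max(\ga^1,\ga^2)$; and membership in $\mathscr{C}$ reduces to $\ga^1=\ga^2=4$ because $C_3\notin\Ga_1\cup\Ga_2$ makes the second defining condition of $\mathscr{C}$ automatic. (Note, though, that none of this bookkeeping is actually needed to invoke \Th~\ref{MT}: the theorem asserts equality in \eqref{eq:MT} as stated, whichever branch applies.)

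The one point of substance is precisely the step you defer: verifying that every pair with $\ga^1,\ga^2$ even and $\min(\ga^1,\ga^2)\geq 8$ lies in $\Cc_1\cup\Cc_2$. Writing that you ``expect these definitions to be arranged'' so that this holds is not a verification, and since the case $\min(\Ga_1\cup\Ga_2)\leq 6$ is immediate from the first clause of \Th~\ref{MT}, this membership check is the entire content of the corollary. Fortunately it is one line. Assume \WLOGtwo\ that $\ga^2\geq\ga^1$ (otherwise interchange the roles of the indices and use $\Cc_2$ instead of $\Cc_1$). Since $\ga^1$ is even, $\ga_e^1=\ga^1$, and condition~(i) in the definition of $\Cc_1$ reads $0\equiv\ga^2\geq\max(6,\ga_e^1)$, which holds because $\ga^2$ is even and $\ga^2\geq\ga^1\geq 8>6$. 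In fact the same check succeeds whenever $\ga^2\geq\max(6,\ga^1)$, so among all-even pairs only those with $\ga^1=\ga^2=4$ genuinely need the $\min(\Ga_1\cup\Ga_2)\leq 6$ clause---consistent with your observation that these are exactly the pairs in $\mathscr{C}$, where \Pro~\ref{C1C2} cannot apply. With that line supplied, your proof is complete and coincides with the paper's.
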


Choose $\Ga_1 \seq \C$ and $\Ga_2 \seq \C_{\geq 5}$ arbitrarily, and let $R_1=R(\Ga_1,\{C_4\} \cup \Ga_2)$ and $R_2=R(\Ga_1,\C_{\leq 4} \cup \Ga_2)$. Since equality holds in \eqref{eq:MT} if $\min(\Ga_1 \cup \Ga_2) \leq 6$, we have the following:

\begin{Cor} \label{cor2}
Let $\Ga_1$, $\Ga_2$, $R_1$, and $R_2$ be as above. Then $R_1=R_2$ whenever $\ga^1 \geq 6$; in particular, $R(C_n,C_4)=R(\{C_n\},\C_{\leq 4})$ whenever $n \geq 6$.
\end{Cor}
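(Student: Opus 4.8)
The plan is to express both $R_1$ and $R_2$ through the formula for $\mm$ displayed just before \Th~\ref{MT}, which is valid for every pair in $\Cc$, and then to invoke the equality clause of that theorem. First I would note that both $(\Ga_1,\{C_4\}\cup\Ga_2)$ and $(\Ga_1,\C_{\leq 4}\cup\Ga_2)$ lie in $\Cc$, since each second set is a non-empty set of cycles. I would then read off the parameters. Because $\Ga_2 \seq \C_{\geq 5}$, the second set of the first pair has minimum $4$, so there $\ga^2=\ga_e^2=4$; the hypothesis $\ga^1 \geq 6$ forces $\ga_e^1 \geq 6$ (with $\ga_e^1=\infty$ allowed, when $\Ga_1$ contains no even cycle). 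Passing to the second pair, the sole change is that $C_3$ now belongs to the second set, so $\ga^2$ drops from $4$ to $3$ while $\ga_e^2=4$ is unaffected; the quantities $\ga^1$ and $\ga_e^1$, depending only on $\Ga_1$, are the same for both pairs.

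Next I would check that neither pair lies in $\mathscr{C}$; this puts us in the second case of \eqref{eq:MT}, giving $R_i=\mm$ rather than $\mm+1$ once equality is invoked. Indeed, $\ga^1 \geq 6$ means $C_3,C_4 \notin \Ga_1$, so neither cycle lies in the intersection of $\Ga_1$ with the second set of either pair; hence the requirement ``$C_3$ or $C_4 \in \Ga_1\cap\Ga_2$'' defining $\mathscr{C}$ fails in both cases. Moreover equality holds in \eqref{eq:MT} for both pairs, because the union of the two sets has minimum cycle length $4$ and $3$ respectively, each $\leq 6$. Thus \Th~\ref{MT} yields $R_1=\mm(\Ga_1,\{C_4\}\cup\Ga_2)$ and $R_2=\mm(\Ga_1,\C_{\leq 4}\cup\Ga_2)$, and it remains to compare these two values.

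The heart of the matter is that lowering $\ga^2$ from $4$ to $3$ does not change $\mm$; this is precisely the phenomenon advertised in the abstract. For the first pair the last term of the formula is $\min(\ga^1+\ga_e^2/2-1,2\ga^1-1)=\min(\ga^1+1,2\ga^1-1)=\ga^1+1$, using $\ga^1\geq 6$, whereas the middle term $\min(\ga^2+\ga_e^1/2-1,2\ga^2-1)=\min(3+\ga_e^1/2,7)$ never exceeds $7\leq\ga^1+1$; hence $\mm(\Ga_1,\{C_4\}\cup\Ga_2)=\ga^1+1$. For the second pair the last term is again $\ga^1+1$, while the middle term becomes $\min(2+\ga_e^1/2,5)=5$, since $\ga_e^1\geq 6$; again this is dominated by $\ga^1+1\geq 7$, so $\mm(\Ga_1,\C_{\leq 4}\cup\Ga_2)=\ga^1+1$ as well. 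Therefore $R_1=\ga^1+1=R_2$. The ``in particular'' then follows on taking $\Ga_1=\{C_n\}$ with $n\geq 6$ and $\Ga_2=\emptyset$, so that $R_1=R(C_n,C_4)$ and $R_2=R(\{C_n\},\C_{\leq 4})$.

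I expect no genuine obstacle: once the $\mm$-formula and the equality clause are in hand, the argument is a short case analysis of two minima. The only point requiring a little care is the bookkeeping in these minima, especially the degenerate case $\ga_e^1=\infty$, where the middle terms collapse to $7$ and $5$ respectively and the conclusion is unaffected.
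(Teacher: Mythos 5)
Your proposal is correct and follows essentially the same route as the paper, which deduces the corollary from the equality clause of \Th~\ref{MT} (applicable because $\min(\Ga_1 \cup \{C_4\} \cup \Ga_2) \leq 4 \leq 6$) together with the formula for $\mm$ in \Co~\ref{maxredblue}, noting that lowering $\ga^2$ from $4$ to $3$ leaves $\mm=\ga^1+1$ unchanged when $\ga^1 \geq 6$. Your bookkeeping—verifying that neither pair lies in $\mathscr{C}$, that $\ga_e^1 \geq 6$, and handling the degenerate case $\ga_e^1=\infty$—is accurate and simply makes explicit the computation the paper leaves to the reader.
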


Hence, including both $C_3$ and $C_4$ in one of the sets, rarely makes any difference from including only $C_4$. This is perhaps quite unintuitive. Using \Th~\ref{MT}, it is possible to determine precisely under what conditions $R_1=R_2$; we refrain from doing this here.

For all pairs $(\Ga_1,\Ga_2)$ such that equality holds in \eqref{eq:MT}, note that since $\mGa$ only depends on $\ga^1$, $\ga_e^1$, $\ga^2$, and $\ga_e^2$, so does $\RGa$, unless $(\ga^1,\ga^2) \in \{(3,3),(4,3),(3,4)\}$.

Before we turn to the structural results, let us state a conjecture:

\begin{Conj} \label{MC}
Equality holds in \eqref{eq:MT} for all pairs $(\Ga_1,\Ga_2) \in \Cc$.
\end{Conj}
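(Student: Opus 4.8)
Since \Th~\ref{MT} already supplies the lower bound in \eqref{eq:MT} for every pair, proving \Con~\ref{MC} reduces to the matching upper bound in the cases it leaves open, i.e., when $\min(\Ga_1 \cup \Ga_2) \geq 7$ and $(\Ga_1,\Ga_2) \notin \Cc_1 \cup \Cc_2$. In this range no cycle of length at most $6$ occurs, so $(\Ga_1,\Ga_2) \notin \mathscr{C}$ and the task is simply to show $\RGa \leq \mGa$. The plan is to reduce to a handful of minimal pairs and then to bound $\RGa$ from above by an extremal argument on the denser colour.

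For the reduction, recall from \eqref{ub} that $\RGa \leq R(\Hh_1,\Hh_2)$ whenever $\0 \neq \Hh_i \seq \Ga_i$. As $\mGa$ depends only on $\ga^1,\ga_e^1,\ga^2,\ga_e^2$, I would replace each $\Ga_i$ by the minimal subset preserving these parameters: put $\Ga_i^\ast=\{C_{\ga^i}\}$ if $\ga^i=\ga_e^i$, and $\Ga_i^\ast=\{C_{\ga^i},C_{\ga_e^i}\}$ if $\ga^i<\ga_e^i$ (with $C_\infty$ read as absent, so a set with no even cycle contributes only $C_{\ga^i}$). Then $\mm(\Ga_1^\ast,\Ga_2^\ast)=\mGa$, and since $\RGa \leq R(\Ga_1^\ast,\Ga_2^\ast)$ it suffices to prove $R(\Ga_1^\ast,\Ga_2^\ast) \leq \mGa$. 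When both $\Ga_i^\ast$ are singletons this is precisely \Th~\ref{HS} (one checks directly that the formula for $\mGa$ reproduces the values there, via the case division into $\De_1,\De_2,\De_3$), so the genuinely new cases are exactly those in which at least one $\Ga_i^\ast=\{C_{\ga^i},C_{\ga_e^i}\}$ pairs an odd cycle with a strictly longer even one.

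For such a minimal pair, set $N=\mGa$ and suppose for contradiction that a $(\Ga_1^\ast,\Ga_2^\ast)$-avoiding red-blue graph $G$ has $N$ vertices. The denser of $\Gr$ and $\Gb$ then has at least $\tfrac12\binom{N}{2}$ edges, hence large circumference by the Erd\H{o}s--Gallai bound; passing to a $2$-connected block of large minimum degree and invoking the standard weak-pancyclicity results for non-bipartite graphs, I would obtain cycles of \emph{every} length throughout a long interval in that colour. The argument then splits on parity as in \Th~\ref{HS}: either this interval already contains the relevant even length $\ga_e^i$ (or the relevant length $\ga^j$), contradicting avoidance, or the dense colour is forced to be essentially bipartite, in which case the complementary colour inherits enough structure to realise its own forbidden cycle. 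Matching the two candidate constructions behind the two inner minima of $\mGa$ against the parity of the cycle thus produced should close each case.

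The hard part will be the simultaneous control demanded by a pair $\{C_{\ga^i},C_{\ga_e^i}\}$. Forbidding one cycle in a colour only bounds its circumference or forces near-bipartiteness, but forbidding both an odd cycle $C_{\ga^i}$ and an even cycle $C_{\ga_e^i}$ pins that colour to a very rigid shape, and one must prove this rigidity incompatible with the other colour avoiding its cycles on exactly $\mGa$ vertices. I expect the tightest regime to be the intermediate one, $\ga^i<\ga_e^i<2\ga^i$, where the two arguments of the relevant minimum in $\mGa$ are comparable and the extremal graph is neither a disjoint union of cliques nor a complete split graph but an interpolation of the two; there a two-sided stability analysis near both extremal configurations, tracking cycle lengths of prescribed parity, will be required. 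Proving such a stability statement is, in my view, the principal obstacle to establishing \Con~\ref{MC}.
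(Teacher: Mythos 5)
You are attempting a statement that the paper itself leaves open: \Con~\ref{MC} is a conjecture, and the paper proves equality in \eqref{eq:MT} only when $\min(\Ga_1 \cup \Ga_2) \leq 6$ or $(\Ga_1,\Ga_2) \in \Cc_1 \cup \Cc_2$ (\Th~\ref{MT}, via \Pros~\ref{C1C2}--\ref{g1>=6g2=6}), so there is no proof in the paper to compare yours against, and anything complete here would be new mathematics. Your opening reduction is correct and worth isolating as a lemma: by \eqref{ub}, and since $\mGa$ depends only on $\ga^1,\ga_e^1,\ga^2,\ga_e^2$ (\Co~\ref{maxredblue}), it suffices to prove the upper bound for the minimal pairs $\Ga_i^\ast=\{C_{\ga^i}\}$ or $\{C_{\ga^i},C_{\ga_e^i}\}$; moreover one can check, as you assert, that all singleton--singleton pairs with $\min(\Ga_1 \cup \Ga_2) \geq 7$ already lie in $\Cc_1 \cup \Cc_2$ (via conditions (i) and (ii) and their mirror images), so the genuinely open cases do pair an odd $C_{\ga^i}$ with a strictly longer even $C_{\ga_e^i}$, outside $\Cc_1 \cup \Cc_2$. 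You are also right that $\mathscr{C}$ is vacuous once $\min \geq 7$.

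The gap is in the extremal step, which fails as stated. The denser colour has at least $\tfrac12\binom{N}{2}$ edges, and Erd\H{o}s--Gallai then yields circumference only on the order of $N/2$, with no parity control; that is far short of forcing a cycle of length exactly $\ga^i$ or $\ga_e^i$ when, say, the target value is near $2\ga^j-1$. The weak-pancyclicity theorems you invoke (Brandt-type results) require a \emph{non-bipartite} graph of minimum degree linear in its own order, and ``passing to a $2$-connected block of large minimum degree'' can shrink the vertex set well below $N$, so the resulting interval of cycle lengths need not reach the forbidden lengths at all; nor does avoiding the single even cycle $C_{\ga_e^i}$ force the colour to be ``essentially bipartite'' (avoidance of one even length is a much weaker constraint than avoidance of all long cycles, which is what the paper exploits via Lemmas~\ref{Lemma 3.1}--\ref{Lemma 3.3} in the $\Cc^4$-type cases). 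Finally, the ``two-sided stability analysis'' in the regime $\ga^i<\ga_e^i<2\ga^i$ is not sketched but merely named, and you concede it is the principal obstacle --- so the proposal is a reduction plus a research programme, not a proof. Note also that density/stability methods of this kind are typically exact only for large cycle lengths, whereas the conjecture demands exact values for all lengths; the known exact results (\Th~\ref{HS} and \Pro~\ref{C1C2}) rest on delicate colour-specific cycle lemmas, which is a warning that the route through global edge-counting is unlikely to close these cases without substantial new ideas.
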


\medskip

Let $(\Ga_1,\Ga_2) \in \Cc$ with $\ga^1 \geq 6$ and $\ga^2=4$, and let $n=\ga^1$. Then, since $\RGa=n+1=R(C_n,C_4)$, and none of the graphs in $\Gg_1 \cup \Gg_2 \cup \Gg_3$ contains a cycle of length at least $4$ or has a cycle of length at least $n$ in its complement, the following result is a direct consequence of \Th~\ref{W-S-R}.

\begin{Thm} \label{struc4}
Let $(\Ga_1,\Ga_2) \in \Cc$ with $\ga^1 \geq 6$ and $\ga^2=4$, and let $n=\ga^1$. Then $G$ is $(\Ga_1,\Ga_2)$-critical if and only if $\Gb \in \Gg_1 \cup \Gg_2 \cup \Gg_3$.
\end{Thm}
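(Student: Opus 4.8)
The plan is to reduce the statement to \Th~\ref{W-S-R} by showing that, under the hypotheses $\ga^1=n\geq 6$ and $\ga^2=4$, the $(\Ga_1,\Ga_2)$-critical graphs coincide with the $(C_n,C_4)$-critical graphs. I would first fix the vertex count. Because $\ga^1\geq 6$ gives $C_3,C_4\notin\Ga_1$ while $\ga^2=4$ gives $C_3\notin\Ga_2$, the pair $(\Ga_1,\Ga_2)$ is not in $\mathscr{C}$; also $\min(\Ga_1\cup\Ga_2)=4\leq 6$, so \Th~\ref{MT} gives $\RGa=\mGa$. Substituting $\ga^2=\ga_e^2=4$ into the formula for $\mm$, the third term $\min(\ga^1+\ga_e^2/2-1,2\ga^1-1)=\min(n+1,2n-1)=n+1$ dominates (the other two terms are at most $7\leq n+1$), so $\RGa=n+1$; and $R(C_n,C_4)=n+1$ by \Th~\ref{HS}. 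Hence both notions of ``critical'' refer to graphs on exactly $n$ vertices, and only the two avoidance conditions remain to be compared.

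For the forward implication, let $G$ be $(\Ga_1,\Ga_2)$-critical. Since $C_n\in\Ga_1$ and $\Gr$ contains no red subgraph from $\Ga_1$, in particular $\Gr$ contains no $C_n$; since $C_4\in\Ga_2$, likewise $\Gb$ contains no $C_4$. Thus $G$ is $(C_n,C_4)$-avoiding on $R(C_n,C_4)-1=n$ vertices, i.e.\ $(C_n,C_4)$-critical, and \Th~\ref{W-S-R} yields $\Gb\in\Gg_1\cup\Gg_2\cup\Gg_3$.

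For the converse, suppose $\Gb\in\Gg_1\cup\Gg_2\cup\Gg_3$, so that $G$ has $n$ vertices; I must check that $G$ is $(\Ga_1,\Ga_2)$-avoiding, and this rests on two structural facts read off the definitions of the $\Gg_i$. First, $\Gb$ has no cycle of length at least $4$: deleting $v$ leaves the graph induced on $X\cup\{y\}$, which the matching conditions force to be acyclic, so every cycle passes through $v$ and consists of $v$ together with a path inside $X\cup\{y\}$ joining two neighbours of $v$; the matching conditions bound the length of any such usable path by $1$, so every cycle is a triangle. As every cycle in $\Ga_2$ has length at least $\ga^2=4$, it follows that $\Gb$ contains no subgraph from $\Ga_2$. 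Second, in each $\Gg_i$ the vertex $v$ is $\Gb$-adjacent to all of $X$, hence has degree at most $1$ in $\Gr$; therefore $\Gr$ is not hamiltonian, and on $n$ vertices a hamiltonian cycle is the only cycle of length at least $n$, so $\Gr$ contains no $C_n$. Since every cycle in $\Ga_1$ has length at least $\ga^1=n$ and $G$ has only $n$ vertices, the sole candidate $C_n$ is absent, whence $\Gr$ contains no subgraph from $\Ga_1$. Thus $G$ is $(\Ga_1,\Ga_2)$-avoiding on $\RGa-1=n$ vertices, i.e.\ $(\Ga_1,\Ga_2)$-critical.

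The two inclusions, joined by the equivalence of \Th~\ref{W-S-R}, give the asserted ``if and only if''. The only step needing genuine care is the first structural fact for the family $\Gg_2$: there the graph induced on $X\cup\{y\}$ is not a matching but contains one path of length $2$, with vertices $x_j,x_{n-2},y$; one must observe that this path cannot complete a cycle through $v$, since its endpoint $y$ is not a neighbour of $v$ (as $N(v)=X$ in $\Gg_2$), so that usable paths still have length at most $1$. Everything else is bookkeeping with cycle lengths and the vertex count.
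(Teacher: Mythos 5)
Your proof is correct and takes essentially the same route as the paper, which obtains \Th~\ref{struc4} as a direct consequence of \Th~\ref{W-S-R} from precisely your two observations: that $\RGa=n+1=R(C_n,C_4)$, and that no graph in $\Gg_1 \cup \Gg_2 \cup \Gg_3$ contains a cycle of length at least $4$ or has a cycle of length at least $n$ in its complement. You simply spell out the verifications (the computation of $\mGa$, the exclusion of $\mathscr{C}$, and the length-$2$ path through $x_{n-2}$ and $y$ in $\Gg_2$) that the paper leaves implicit.
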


We shall prove a similar result when $\ga^2 \in \{3,5\}$, which is the second main result of this paper. Here we state an abridged version.

\begin{Thm}[abridged version of \Th~\ref{struc35+}] \label{struc35}
Given $(\Ga_1,\Ga_2) \in \Cc$ and a $(\Ga_1,\Ga_2)$-critical graph $G$, assume that either
\begin{equation} \label{eq1}
\h{$\ga^1 \geq 5$ and $\ga^2=3$}
\end{equation}
or
\begin{equation} \label{eq2}
\h{$\ga^1 \geq 6$, $\ga^2=5$, and $\ga_e^2 \geq 8$.}
\end{equation}
Then $G$ is blue bipartite.
\end{Thm}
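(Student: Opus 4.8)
The plan is to argue by contradiction: assume $\Gb$ is not bipartite and produce a red cycle whose length lies in $\Ga_1$, contradicting that $G$ is $(\Ga_1,\Ga_2)$-avoiding. Since $\ga^2\le 5\le 6$, Theorem~\ref{MT} applies and gives $\RGa=\mGa$, so $G$ has exactly $n=\mGa-1$ vertices; in both cases the formula for $\mGa$ yields $n\ge\ga^1$, with equality possible only when $\ga^2=3$ and $\ga_e^2=4$. Suppose $\Gb$ contains an odd cycle and let $C=v_1v_2\cdots v_p$ ($p$ odd) be a shortest one. By minimality $C$ has no blue chord, so every chord is red and $\Gr[V(C)]=\ol{C_p}$. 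The hypotheses on $\ga^2$ restrict $p$: under \eqref{eq1} the absence of a blue $C_3$ forces $p\ge 5$, while under \eqref{eq2} the absence of a blue $C_5$ forces $p\ne 5$, so $p=3$ or $p\ge 7$.

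Next I would analyse, for each $u\in V(G)\setminus V(C)$, the blue edges from $u$ to $C$. Under \eqref{eq1} this is clean: if $u$ were blue-adjacent to two consecutive $v_i,v_{i+1}$ we would obtain a blue triangle, and if $u$ were blue-adjacent to $v_i,v_j$ spanning an odd arc of length $a$, then $v_iuv_j$ together with that arc would be an odd blue cycle of length $a+2$, so minimality forces $a=p-2$; that is, any two blue neighbours of $u$ on $C$ lie at distance $2$, and there can be at most two of them. Hence each external vertex sends at least $p-2$ red edges to $C$. Under \eqref{eq2} the analysis is more involved but analogous in spirit: the absence of a blue $C_5$ (and of short even blue cycles) together with the minimality of $C$ again bounds the blue edges from $u$ to $V(C)$—for instance, blue neighbours of two vertices of a blue triangle through two distinct external vertices would close a blue $C_5$—so the red edges between external vertices and $C$ are again abundant.

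With this density in hand, the goal is to assemble a red $C_{\ga^1}$. When $p\ge\max(7,\ga^1)$ this is immediate, since $\ol{C_p}$ is pancyclic for $p\ge 7$ and hence contains a red cycle of every length in $[3,p]\ni\ga^1$; the boundary case $p=\ga^1=5$ is also immediate because $\ol{C_5}$ is itself a red $C_5$. The substantive case is $p<\ga^1$. Here I would start from a red Hamilton path of $\ol{C_p}$ and splice external vertices into its gaps: since each external vertex is red to all but at most two vertices of $C$, it can be inserted between a suitable consecutive pair, lengthening the red path by one; iterating and then closing up, the bound $n\ge\ga^1$ supplies enough external vertices to reach length exactly $\ga^1$. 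Carrying this out requires keeping the spliced vertices mutually compatible in red, using the value $\mGa$ to match the supply of external vertices to the number of insertions, and exploiting the parity relation between $p$ and $\ga^1$ so that a cycle of the exact length $\ga^1$ (rather than merely $\ge\ga^1$) is produced.

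The main obstacle is precisely this last, exact-length, construction in the tight regimes. When $\ga^2=3$ and $\ga_e^2=4$ we have $n=\ga^1$, blue girth at least $5$, and the target red $C_{\ga^1}$ is a \emph{Hamilton} cycle, so there is no slack in the count and a crude degree bound (e.g.\ Dirac) does not suffice; one must use the fine structure forced by the blue girth together with the non-existence of a red $C_{\ga^1}$. The case $p=3$ under \eqref{eq2} is also delicate, since $C$ contributes no red chords and the red cycle must be built almost entirely from external vertices. I expect these cases to absorb the bulk of the work, and I would handle the exact cycle-finding by reusing the cycle-location techniques already developed for the upper bound $\RGa\le\mGa$ in the proof of Theorem~\ref{MT}.
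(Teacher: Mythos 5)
Your setup matches the paper's: the vertex count via \Th~\ref{MT} (a critical graph has at least $\ga^1$ vertices under \eqref{eq1}, at least $\ga^1+2$ under \eqref{eq2}, with equality in the first case exactly when $\ga_e^2=4$), the shortest odd blue cycle $C$ with all chords red, and red pancyclicity of $\Gr[V(C)]$ when $p\geq 7$ (the paper derives this from Bondy's theorem in Lemma~\ref{chords}); indeed the reduction ``avoiding and not blue bipartite implies a red $C_{\ga^1}$'' is exactly the paper's \Pro~\ref{avoiding}, proved with machinery from the upper bounds of \Th~\ref{MT}, so your final instinct is correct. But the decisive steps are left undone, and you flag them yourself. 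Under \eqref{eq1} with $p<\ga^1$, the splicing must terminate in a red cycle of \emph{exact} length $\ga^1$, and in the tight regime $\ga_e^2=4$ this is a red Hamilton cycle of $G$; your bound of at least $p-2$ red edges from each outside vertex to $C$ says nothing about red edges \emph{among} outside vertices, so the ``mutual compatibility in red'' you mention is precisely the unproven point. The paper's Lemma~\ref{Lemma X} settles it by induction on $|G|$: the induction hypothesis gives a red Hamilton cycle of $G-x$, and if no insertion of $x$ succeeds, then the successors of the red neighbours of $x$ form a blue clique, forcing $x$ to have at most two red neighbours and hence two consecutive blue neighbours on the cycle --- a blue triangle, contradiction. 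This closing argument (or an equivalent) is absent from your sketch, and without it the case $p<\ga^1$, which is the heart of the theorem, is not established.

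Under \eqref{eq2} the approach as described would moreover fail at $p=3$. Since blue triangles (and blue $C_4$, $C_6$) are permitted when $\ga^2=5$ and $\ga_e^2\geq 8$, minimality of the shortest odd blue cycle gives no control over blue edges from outside vertices to $C$: an outside vertex may be blue to all of the triangle, so your claim that red edges to $C$ are ``again abundant'' is false, and your $C_5$-closing observation requires the edge $uu'$ to be blue, so it rules out only special configurations. The paper abandons the shortest-odd-blue-cycle frame entirely here and instead proves \Pro~\ref{new} (if $\Gb$ has an odd cycle but no $C_5$, then $\Gr$ has cycles of \emph{every} length in $[6,|G|-2]$) via a \emph{maximal red} cycle: a red $C_4$ exists because $R(C_4,C_5)=7\leq|G|$, a maximal red cycle misses at most two vertices (Claim~\ref{max}, using the extension Lemma~\ref{old}), and Lemmas~\ref{new1} and~\ref{new2} step the length down to exactly $\ga^1$ --- which is also why the hypothesis $|G|\geq\ga^1+2$, i.e.\ $\ga_e^2\geq 8$, enters. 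So while your skeleton for \eqref{eq1} parallels the paper's, the proposal neither supplies nor correctly anticipates the mechanism needed for \eqref{eq2}, and the exact-length construction is missing in both cases; as written, the proof has a genuine gap.
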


\Th~\ref{struc35+} will give us a complete characterisation of the $(\Ga_1,\Ga_2)$-critical graphs that satisfy \eqref{eq1} or \eqref{eq2}. It is stated and proved in Subsection~\ref{struc}. From \Th~\ref{struc35+}, we easily obtain the following result, which characterises $(C_n,C_3)$- and $(C_n,C_5)$-critical graphs, and is the analogue of \Th~\ref{W-S-R}. Note that there are a lot fewer possibilities for the critical graphs, compared to the situation in \Th~\ref{W-S-R}.

\begin{Cor} \label{cor3}
${}$
\begin{itemize}
  \item[(a)] Let $n \geq 5$. Then $G$ is $(C_n,C_3)$-critical if and only if $\Gb=K_{n-1,n-1}$ or $K_{n-1,n-1}-e$ for some edge $e$.
  \item[(b)] Let $n \geq 6$. Then $G$ is $(C_n,C_5)$-critical if and only if $\Gb=K_{n-1,n-1}$ or $K_{n-1,n-1}-e$ for some edge $e$.
\end{itemize}
\end{Cor}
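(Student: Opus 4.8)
The plan is to derive \Co~\ref{cor3} from the abridged structural result \Th~\ref{struc35} (equivalently, its full version \Th~\ref{struc35+}), followed by an elementary analysis of the red subgraph; I treat (a) and (b) uniformly, writing $\Ga_1=\{C_n\}$ and $\Ga_2=\{C_3\}$ or $\{C_5\}$. By \Th~\ref{HS}, since $(n,3),(n,5)\in\De_3$, we have $\RGa=2n-1$, so any $(\Ga_1,\Ga_2)$-critical graph $G$ has exactly $2n-2$ vertices. In case (a), $\ga^1=n\ge5$ and $\ga^2=3$, so \eqref{eq1} holds; in case (b), $\ga^1=n\ge6$, $\ga^2=5$, and $\ga_e^2=\infty$, so \eqref{eq2} holds. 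Either way \Th~\ref{struc35} yields that $G$ is blue bipartite.

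Next I fix a bipartition $V(G)=A\cup B$ of $\Gb$. As $G$ is complete and carries no blue edge inside a part, every edge within $A$ and within $B$ is red; thus $\Gr$ contains disjoint red complete graphs on $A$ and on $B$. Since $\Gr$ has no $C_n$ while $K_m\supseteq C_n$ for $m\ge n$, neither part has $n$ or more vertices, and $|A|+|B|=2n-2$ forces $|A|=|B|=n-1$. Hence $\Gb\seq K_{n-1,n-1}$, and it remains only to count the red edges between $A$ and $B$.

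The crux is to show that at most one $A$--$B$ edge is red; otherwise I build a red $C_n$. Suppose two cross edges are red. If they are vertex-disjoint, say $a_1b_1$ and $a_2b_2$, I join a path on $i$ vertices from $a_1$ to $a_2$ inside the red $K_{n-1}$ on $A$, the edge $a_2b_2$, a path on $j$ vertices from $b_2$ to $b_1$ inside the red $K_{n-1}$ on $B$, and the edge $b_1a_1$; this is a red cycle of length $i+j$, and choosing $2\le i\le n-2$, $j=n-i$ (possible as $n\ge5$) gives a red $C_n$. If the two red cross edges share a vertex, say $a_1b_1$ and $a_1b_2$ with $b_1\ne b_2$, I close a Hamiltonian path of the red $K_{n-1}$ on $B$ from $b_1$ to $b_2$ through $a_1$, obtaining a red $C_n$ on $\{a_1\}\cup B$; a shared vertex in $B$ is symmetric. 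Therefore at most one cross edge is red, so $\Gb=K_{n-1,n-1}$ or $K_{n-1,n-1}-e$, which is the ``only if'' direction.

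For the converse, each such $G$ has $2n-2=\RGa-1$ vertices, and $\Gb$ is bipartite, hence free of odd cycles and so of any blue $C_3$ or $C_5$. Finally, with at most one red cross edge, no red cycle can use that edge (there is no second cross edge to return on), so every red cycle lies inside one red $K_{n-1}$ and has length at most $n-1$; thus $\Gr$ has no $C_n$ and $G$ is $(\Ga_1,\Ga_2)$-avoiding, hence critical. The single real obstacle is the structural input that $G$ is blue bipartite, which \Th~\ref{struc35} supplies; granting it, the remaining cycle constructions are routine, the only point needing care being that the target length $n$ is realisable throughout the given range of $n$.
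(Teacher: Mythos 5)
Your proposal is correct and takes essentially the paper's route: the paper obtains \Co~\ref{cor3} as a direct consequence of \Th~\ref{struc35+}, whose proof combines blue bipartiteness (\Pro~\ref{avoiding}) with the characterisation in \Pro~\ref{critical}, and you use the same structural input (\Th~\ref{struc35}) and then re-derive by hand precisely the $2n \leq k$ branch of \Pro~\ref{critical} in the case $k=\ga_e^2=\infty$ (your counting of red cross edges and the sufficiency check match that proposition's argument). All steps verify, including the point that $\ga_e^2=\infty \geq 8$ makes \eqref{eq2} applicable when $\Ga_2=\{C_5\}$.
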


\begin{Remark}
In particular, for $n \geq 6$, a graph is $(C_n,C_3)$-critical precisely when it is $(C_n,C_5)$-critical.
\end{Remark}

We pose the following question:

\begin{Ques} \label{ques}
Let $n \geq k \geq 3$, where $k$ is odd and $(n,k) \neq (3,3)$. For which pairs $(n,k)$ is it true that $G$ is $(C_n,C_k)$-critical if and only if $\Gb=K_{n-1,n-1}$ or $K_{n-1,n-1}-e$ for some edge $e$? Is it true for all of them?
\end{Ques}

\medskip

After a short section on notation and conventions (Section~\ref{notation}), the paper is organised as follows. In Section~\ref{prel}, we define some colourings that we shall need later, and some numbers from which $\mGa$ stems. We also prove some preparatory results. Then, in Section~\ref{proofs}, we prove the two main results of this paper. Subsection~\ref{numbers} is devoted to computation of generalised Ramsey numbers (thus establishing \Th~\ref{MT}), while Subsection~\ref{struc} deals with characterisation of $(\Ga_1,\Ga_2)$-avoiding and $(\Ga_1,\Ga_2)$-critical graphs (in particular, \Th~\ref{struc35+}).

\section{Notation and conventions} \label{notation}

A graph on $n \geq 3$ vertices is called \emph{hamiltonian} if it contains a cycle of length $n$ (an \emph{$n$-cycle}), and \emph{pancyclic} if it contains cycles of every length between $3$ and $n$. Vertex indices will always be interpreted modulo the length of the cycle that we are considering at the moment. For instance, $x_{11}=x_3$ in a cycle of length $8$. When we consider two vertices $x_i$ and $x_{i+j}$ of a cycle $C=x_1x_2 \cd x_nx_1$, we shall assume that $0 \leq j \leq n-1$. A \emph{$j$-chord} of $C$ is an edge of the form $x_ix_{i+j}$, where $2 \leq j \leq n-2$.

Let $G$ be a red-blue graph. Two vertices $u,v \in V(G)$ are called \emph{red adjacent} to each other, or \emph{red neighbours}, if the edge $uv$ is red; \emph{blue adjacent} is defined analogously. If $U \seq V(G)$, let $G[U]$ denote the induced subgraph on $U$ with the induced colouring. Also, if $H=G[U]$ and $U' \seq V(G)$, let $H-U'=G[U-U']$, and if $v \in V(G)$, let $H-v=H-\{v\}$. When there is no risk of ambiguity, we write $v \in G$ and $|G|$ instead of $v \in V(G)$ and $|V(G)|$, respectively.

\section{Preliminaries} \label{prel}

In this section, we first define some colourings that will be needed in the proofs to come. We then define bipartite versions of Ramsey numbers, and compute them for any sets of cycles. They turn out to be closely related to generalised Ramsey numbers (see \Th~\ref{MT}). Finally, we state and prove a number of lemmas. They will be used in the proofs of \Pros~\ref{C1C2}, \ref{g1>=5g2=3}, and \ref{g1>=6g2=5}, which establish \Th~\ref{MT} when $(\Ga_1,\Ga_2) \in \Cc_1 \cup \Cc_2$, $\ga^1 \geq 5$ and $\ga^2=3$, and $\ga^1 \geq 6$ and $\ga^2=5$, respectively.

\bigskip

We shall need the following colourings:

\medskip

\textbf{Colouring~1:} The red-blue graph on $4$ vertices whose red (and blue) \indent subgraph is a path of length $3$.

\medskip

\textbf{Colouring~2:} The red-blue graph on $5$ vertices whose red (and blue) \indent subgraph has vertex degrees $1$, $1$, $2$, $3$, and $3$.

\medskip

\textbf{Colouring~3:} The red-blue graph on $5$ vertices whose red (and blue) \indent subgraph is a cycle of length $5$.

\medskip

\textbf{Colouring~4:} The red-blue graph on $n+k/2-2$ vertices, with $n \geq k/2$ \indent and $k$ even, whose blue subgraph equals $K_{n-1,k/2-1}$.

\medskip

\textbf{Colouring~5:} The red-blue graph on $2k-2$ vertices whose red subgraph \indent equals $K_{k-1,k-1}$.

\medskip

\textbf{Colouring~6:} The red-blue graph on $2n-2$ vertices whose blue subgraph \indent equals $K_{n-1,n-1}$.

\bigskip

Note that Colourings~3, 4, 5, and 6 were used to prove the lower bounds in \Th~\ref{HS} (see \cite{F-S1} or \cite{Rosta}).

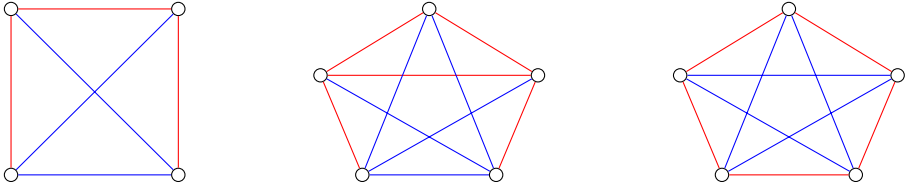
\begin{figure}[tb]
\begin{tikzpicture}[scale=2.2]
  \vertex (00) at (0,0) {}; \vertex (01) at (0,1) {}; \vertex (10) at (1,0) {}; \vertex (11) at (1,1) {};
  \draw[red]  (00)--(01)--(11)--(10);
  \draw[blue] (01)--(10)--(00)--(11);
  \vertex (1c) at (2.1,0) {}; \vertex (1d) at (2.9,0) {}; \vertex (1b) at (1.85,0.6) {}; \vertex (1e) at (3.15,0.6) {}; \vertex (1a) at (2.5,1) {};
  \vertex (2c) at (4.25,0) {}; \vertex (2d) at (5.05,0) {}; \vertex (2b) at (4,0.6) {}; \vertex (2e) at (5.3,0.6) {}; \vertex (2a) at (4.65,1) {};
  \draw[red]  (1c)--(1b)--(1a)--(1e)--(1d); \draw[red]  (1b)--(1e);
  \draw[blue] (1b)--(1d)--(1a)--(1c)--(1e); \draw[blue] (1c)--(1d);
  \draw[red]  (2a)--(2b)--(2c)--(2d)--(2e)--(2a);
  \draw[blue] (2a)--(2d)--(2b)--(2e)--(2c)--(2a);
\end{tikzpicture}
\caption{Colourings~1, 2, and 3.} \label{fig}
\end{figure}

\subsection{Bipartite versions of Ramsey numbers} \label{redbluenumbers}

\begin{Defn} \label{redbluedefn}
Let $\Rr(\Gg_1,\Gg_2)$ be the least positive integer $n$, such that each red bipartite graph on $n$ vertices contains a red subgraph from $\Gg_1$ or a blue subgraph from $\Gg_2$; $\Rb(\Gg_1,\Gg_2)$ is defined analogously.
\end{Defn}

Observe that
\[
\Rr(\Gg_1,\Gg_2)=R(\Gg_1 \cup \C_o,\Gg_2) \leq R(\Gg_1,\Gg_2)
\]
and
\[
\Rb(\Gg_1,\Gg_2)=R(\Gg_1,\Gg_2 \cup \C_o) \leq R(\Gg_1,\Gg_2).
\]
In particular, $\Rr(\Gg_1,\Gg_2)$ and $\Rb(\Gg_1,\Gg_2)$ always exist. Moreover, the following fact, which we shall use frequently, follows:
\begin{equation} \label{R-comp}
R(\Gg_1,\Gg_2) \geq \max\!\big(\Rr(\Gg_1,\Gg_2),\Rb(\Gg_1,\Gg_2)\big).
\end{equation}

\begin{Prop} \label{redblue}
Let $(\Ga_1,\Ga_2) \in \Cc$. Then
\begin{equation} \label{blue}
\RbGa=\begin{cases}\ga^1+\ga_e^2/2-1 & \h{if $2\ga^1>\ga_e^2$ and $(\ga^1,\ga_e^2) \neq (3,4)$,} \\ 2\ga^1-1 & \h{if $2\ga^1 \leq \ga_e^2$ or $(\ga^1,\ga_e^2)=(3,4)$,}\end{cases}
\end{equation}
or equivalently,
\[
\RbGa=\begin{cases}5 & \h{if $(\ga^1,\ga_e^2)=(3,4)$,} \\ \min(\ga^1+\ga_e^2/2-1,2\ga^1-1) & \h{otherwise.}\end{cases}
\]
\end{Prop}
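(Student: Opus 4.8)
The plan is to prove matching lower and upper bounds for the first (case-split) formula; the stated equivalence with the $\min$-formula is then a routine check of the regimes $2\ga^1>\ga_e^2$, $2\ga^1=\ga_e^2$, $2\ga^1<\ga_e^2$, and $\ga_e^2=\infty$ (plus the single exceptional pair $(3,4)$), using that $\ga^1+\ga_e^2/2-1\le 2\ga^1-1$ exactly when $\ga_e^2\le 2\ga^1$. Throughout I write $m=\ga_e^2/2$ (so $m\ge 2$), and I use repeatedly that if $G$ is blue bipartite with bipartition $V=A\sqcup B$, then $G[A]$ and $G[B]$ are red complete graphs, so a part of size $\ge\ga^1$ already yields a red $C_{\ga^1}$, hence a red $\Ga_1$-cycle.

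For the lower bounds I exhibit the colourings from Section~\ref{prel}. When $2\ga^1>\ga_e^2$ and $(\ga^1,\ga_e^2)\ne(3,4)$, Colouring~4 with $n=\ga^1$ and $k=\ga_e^2$ (legitimate since $n\ge k/2$) is blue bipartite on $\ga^1+\ga_e^2/2-2$ vertices; its red part $K_{\ga^1-1}\sqcup K_{\ga_e^2/2-1}$ has no cycle of length $\ge\ga^1$ and its blue part $K_{\ga^1-1,\ga_e^2/2-1}$ has no cycle of length $\ge\ga_e^2$ (and no odd cycle at all), so it avoids both, giving $\RbGa\ge\ga^1+\ga_e^2/2-1$. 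When $2\ga^1\le\ga_e^2$ (including $\ga_e^2=\infty$) I use Colouring~6 with $n=\ga^1$: its blue part $K_{\ga^1-1,\ga^1-1}$ has longest cycle $2\ga^1-2<\ga_e^2$ and its red part avoids $C_{\ga^1}$, giving $\RbGa\ge 2\ga^1-1$. Finally, for $(3,4)$, Colouring~1 (red and blue parts both a path on four vertices) contains neither a red triangle nor a blue $C_4$, giving $\RbGa\ge 5=2\ga^1-1$.

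For the upper bounds, the red-clique observation already gives $\RbGa\le 2\ga^1-1$ in general, which settles the cases $2\ga^1\le\ga_e^2$ and $(3,4)$. The remaining, and main, case is $2\ga^1>\ga_e^2$ with $(\ga^1,\ga_e^2)\ne(3,4)$, where I must show $\RbGa\le\ga^1+\ga_e^2/2-1$; note that here necessarily $\ga^1\ge 4$. Let $G$ be blue bipartite on $N=\ga^1+m-1$ vertices, $V=A\sqcup B$ with $|A|\ge|B|$, and suppose $G$ has no red $\Ga_1$-cycle and no blue $\Ga_2$-cycle. Then $|A|,|B|\le\ga^1-1$, and since $|A|+|B|=\ga^1+m-1$ also $|A|,|B|\ge m$. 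The key step is that the red edges crossing between $A$ and $B$ contain no matching of size two: two independent red crossing edges $a_1b_1,a_2b_2$, joined by suitable paths inside the red cliques $G[A]$ and $G[B]$, produce red cycles of every length in $[4,|A|+|B|]\ni\ga^1$, hence a red $C_{\ga^1}$. Thus the red crossing edges form a star centred at some vertex $v$ (or none). Removing $v$ leaves all crossing edges blue, so if the part containing $v$ has more than $m$ vertices I can pick $m$ vertices from each side avoiding $v$ and read off a blue $K_{m,m}\supseteq C_{2m}=C_{\ga_e^2}$, a contradiction.

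This leaves the corner cases where $v$ lies in a part of size exactly $m$. If $v\in B$ with $|B|=m$, then $|A|=\ga^1-1$: should $v$ have at least two blue neighbours in $A$, I choose $m$ vertices of $A$ including two of them and obtain a blue $C_{2m}$ from $K_{m,m}$ minus a star of minimum degree $\ge2$ (Hamiltonian by the Moon--Moser condition); otherwise $v$ is red-adjacent to all but at most one vertex of $A$, so $\{v\}\cup A$ induces $K_{\ga^1}$ minus at most one edge, which contains $C_{\ga^1}$ because $\ga^1\ge 4$. The symmetric possibility $v\in A$ with $|A|=m$ forces $|A|=|B|=m=\ga^1-1$ and is handled identically. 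The main obstacle is precisely this corner analysis: it is where the hypothesis $\ga^1\ge 4$ (equivalently, the exclusion of $(3,4)$) is essential, since $K_3$ minus an edge is not Hamiltonian, and where the bipartite Hamiltonicity of $K_{m,m}$ minus a star is needed. I would isolate these two graph-theoretic facts, together with the interval-of-cycle-lengths construction, as preliminary lemmas so that the case analysis above reduces to bookkeeping.
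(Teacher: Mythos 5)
Your proposal is correct and takes essentially the same route as the paper's proof: the same lower-bound colourings (Colouring~4 for $2\ga^1>\ga_e^2$, Colouring~6 for $2\ga^1\le\ga_e^2$, Colouring~1 for $(3,4)$), the same pigeonhole observation giving $\RbGa\le 2\ga^1-1$, and in the main case the same structural argument that two disjoint red crossing edges force a red $C_{\ga^1}$, so the red crossing edges form a star whose centre is then analysed exactly as the paper analyses its common vertex $x$. The only cosmetic differences are that where you invoke the Moon--Moser condition (and the Hamiltonicity of $K_n$ minus an edge) to close the corner cases, the paper extends the blue path $v_1xv_2$ to a blue $C_k$ directly and counts red edges at $x$ instead of blue ones.
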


\begin{Remarks}
Of course, the analogous result holds for $\RrGa$. Furthermore, one can extend the definition of $\RbGa$ to include the case $\Ga_2=\emptyset$, and \eqref{blue} will still hold (note that $\ga^2=\ga_e^2=\infty$). Also note that $\ga^1+\ga_e^2/2-1=2\ga^1-1$ when $2\ga^1=\ga_e^2$.
\end{Remarks}

\begin{proof}
In order to simplify notation, let $n=\ga^1$ and $k=\ga_e^2$. Let us first prove the lower bounds.

$2n>k$: Colouring~4 is blue bipartite on $n+k/2-2$ vertices, and contains no red cycle of length at least $n$, no blue cycle of length at least $k$, and no odd blue cycle. Hence, $\RbGa \geq n+k/2-1$.

$2n \leq k$ or $(n,k)=(3,4)$: Colouring~6 is blue bipartite on $2n-2$ vertices, and contains no red cycle of length at least $n$, no blue cycle of length at least $k$, and no odd blue cycle. Hence, $\RbGa \geq 2n-1$ when $2n \leq k$. When $(n,k)=(3,4)$, the lower bound follows from Colouring~1.

We now turn to the upper bounds. Let $G$ be an arbitrary blue bipartite graph on $2n-1$ vertices; say that $\Gb \seq K_{p,q}$, where $p+q=2n-1$. Then $\max(p,q) \geq n$, whence $G$ contains a red $C_n$. Hence, we only have to consider the case when $2n>k$ and $(n,k) \neq (3,4)$.

Since $2n>k$ and $(n,k) \neq (3,4)$, $n \geq 4$. Let $G$ be an arbitrary blue bipartite graph on $n+k/2-1$ vertices; say that $\Gb \seq K_{p,q}$, where $p+q=n+k/2-1$ and $p \geq q$, and assume $G$ is $(\Ga_1,\Ga_2)$-avoiding. Were $p \geq n$, $G$ would contain a red $C_n$, whence $q \geq k/2$. Since $G$ contains no blue $C_k$, there is at least one red edge between the red $K_p$ and the red $K_q$. Were there two disjoint red edges between $K_p$ and $K_q$, $G$ would contain a red $C_n$ (since $n \geq 4$), whence all red edges between $K_p$ and $K_q$ have a common vertex $x$. Since $G$ contains no blue $C_k$, it follows that $q=k/2$ and $p=n-1$.

Assume first that $x \in K_{k/2}$. Were there at least two red edges between $K_{n-1}$ and $x$, $G$ would contain a red $C_n$, whence there is only one red edge between $K_{n-1}$ and $x$. Hence, there are at least two blue edges between $K_{n-1}$ and $x$ (since $n \geq 4$), say $v_1x$ and $v_2x$, and $v_1xv_2$ can then be extended to a blue $C_k$, contrary to the hypothesis.

Assume now that $x \in K_{n-1}$. Then all edges between $K_{n-1}-x$ and $K_{k/2}$ are blue, whence $G$ contains a blue $C_k$, unless $n-1=k/2$, in which case $x \in K_{k/2}$ (which we have already treated).
\end{proof}

Given $(\Ga_1,\Ga_2) \in \Cc$, let
\[
\mm=\mGa=\max\!\big(\RrGa,\RbGa\big).
\]
The following result is an immediate consequence of \Pro~\ref{redblue}.

\begin{Cor} \label{maxredblue}
Let $(\Ga_1,\Ga_2) \in \Cc$. Then
\[
\mm=\max\!\big(5,\min(\ga^2+\ga_e^1/2-1,2\ga^2-1),\min(\ga^1+\ga_e^2/2-1,2\ga^1-1)\big).
\]
\end{Cor}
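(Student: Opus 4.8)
The plan is to reduce everything to \Pro~\ref{redblue} and its red analogue (noted in the Remarks immediately after that proposition), which give closed forms for $\RbGa$ and $\RrGa$. Writing $B=\min(\ga^1+\ga_e^2/2-1,2\ga^1-1)$ and $A=\min(\ga^2+\ga_e^1/2-1,2\ga^2-1)$ for the two ``generic'' values that appear inside the claimed formula, my first step is to prove the single clean identity
\[
\RbGa=\max(5,B),
\]
valid in \emph{all} cases, together with its symmetric counterpart $\RrGa=\max(5,A)$. Granting these, the corollary follows at once from the definition $\mm=\max(\RrGa,\RbGa)$:
\[
\mm=\max\!\big(\RrGa,\RbGa\big)=\max\!\big(\max(5,A),\max(5,B)\big)=\max(5,A,B),
\]
which is exactly the asserted expression. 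So the whole proof rests on reconciling the two-line case split of \Pro~\ref{redblue} with the single $\max$ appearing in \Co~\ref{maxredblue}.

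The second step, and the only real content, is verifying $\RbGa=\max(5,B)$. \Pro~\ref{redblue} splits into two cases. In the generic case $(\ga^1,\ga_e^2)\neq(3,4)$ it gives $\RbGa=B$, so here I must check the inequality $B\geq5$, which then yields $\max(5,B)=B=\RbGa$. This bound is what I expect to be the main obstacle, although it is only a short case analysis: the term $2\ga^1-1\geq5$ always, since $\ga^1\geq3$; and for the term $\ga^1+\ga_e^2/2-1$, when $\ga^1\geq4$ one has $\ga^1+\ga_e^2/2-1\geq 4+2-1=5$ using $\ga_e^2\geq4$, while the boundary value $\ga^1=3$ is exactly where genericity is needed — there $(\ga^1,\ga_e^2)\neq(3,4)$ forces the even number $\ga_e^2$ to satisfy $\ga_e^2\geq6$, so $\ga^1+\ga_e^2/2-1\geq 3+3-1=5$ (the case $\ga_e^2=\infty$ being trivial). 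In the remaining exceptional case $(\ga^1,\ga_e^2)=(3,4)$, the proposition gives $\RbGa=5$, while $B=\min(3+2-1,\,5)=4<5$, so the literal $5$ in $\max(5,B)$ is precisely what absorbs this defect and again produces $\max(5,B)=5=\RbGa$. Thus $\RbGa=\max(5,B)$ in every case.

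For the third step I would invoke symmetry: interchanging the roles of the two colour classes (equivalently, the indices $1$ and $2$) turns \Pro~\ref{redblue} into its red analogue, and the identical argument gives $\RrGa=\max(5,A)$, with the exceptional case now being $(\ga^2,\ga_e^1)=(3,4)$. Combining the two identities in the displayed chain above completes the proof. The conceptual point worth highlighting is that introducing the constant $5$ into the outer maximum is not an independent phenomenon but exactly the bookkeeping device that unifies the generic formulas with the two $(3,4)$-exceptions of \Pro~\ref{redblue}; once that is recognised, no further computation about $\mGa$ is required.
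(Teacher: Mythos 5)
Your proof is correct and takes essentially the same approach as the paper, which states \Co~\ref{maxredblue} as an immediate consequence of \Pro~\ref{redblue} (and its red analogue) via $\mm=\max\!\big(\RrGa,\RbGa\big)$ --- precisely your identities $\RbGa=\max(5,B)$ and $\RrGa=\max(5,A)$. Your checks that $B\geq 5$ in the generic case ($2\ga^1-1\geq 5$ always; $\ga_e^2\geq 6$ forced when $\ga^1=3$) and that the constant $5$ absorbs the $(3,4)$ exceptions merely spell out the details the paper leaves implicit, and they are all accurate.
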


\subsection{Preparatory results} \label{prep}

The following three lemmas, due to K\'arolyi and Rosta~\cite{K-R}, guarantee the existence of certain monochromatic cycles under various assumptions. They will be used in the proof of \Pro~\ref{C1C2}.

\begin{Lemma}[\mbox{\cite[Lemma~3.1]{K-R}}] \label{Lemma 3.1}
Let $G$ be a red-blue graph on $n+k/2-1$ vertices, where $n \geq k \geq 4$, $n \geq 5$, and $k \equiv 0$. Then $G$ contains either a monochromatic cycle of length at least $n$ or a blue $C_k$.
\end{Lemma}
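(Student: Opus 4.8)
The plan is to argue the contrapositive within the colouring: assume $G$ contains no monochromatic cycle of length at least $n$, and produce a blue $C_k$. Write $N=n+k/2-1$, and note that then both $\Gr$ and $\Gb$ have all cycles of length at most $n-1$. The first observation I would use is that a blue complete bipartite graph $K_{a,b}$ with $\min(a,b)\ge k/2$ contains a blue $C_k$, because $k$ is even and $K_{k/2,k/2}\seq K_{a,b}$ already contains a cycle of length $2\cdot(k/2)=k$. So it suffices to locate, in $\Gb$, such a balanced blue biclique, or else to handle the one configuration in which this fails.

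First I would look at the components of $\Gr$. If every red component has order at most $n-1$, then I claim the vertex set splits as $V(G)=X\sqcup Y$, where $X$ and $Y$ are unions of red components and $|X|,|Y|\ge k/2$: processing the component orders and using that each is at most $N-k/2=n-1$, either the partial sums meet the interval $[k/2,n-1]$, or some single component already has order in $(n-k/2,\,n-1]\seq[k/2,n-1]$ (here $n\ge k$ gives $n-k/2\ge k/2$). Either way such a split exists. Since no red edge joins $X$ to $Y$, all $X$--$Y$ edges are blue, so $\Gb\qes K_{|X|,|Y|}$ with both sides at least $k/2$, and the observation above yields a blue $C_k$.

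This leaves the case in which $\Gr$ has a component $C^*$ of order $m\ge n$; here the at most $k/2-1$ vertices outside $C^*$ are blue-adjacent to all of $C^*$. Now $C^*$ is red-connected on $m\ge n$ vertices yet has red circumference at most $n-1$, so it cannot be red-complete, and I would split on how long a red path $C^*$ admits. If the longest red path in $C^*$ is short, then $C^*$ is ``bushy'' in red and its blue complement is dense --- in the extreme, a red star leaves a blue clique on $m-1\ge n-1\ge k$ vertices, which contains a blue $C_k$; in general a short longest red path forces a large blue clique or biclique by a Tur\'an/Erd\H{o}s--Gallai-type bound, again giving a blue $C_k$. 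If instead $C^*$ has a long red path $x_1\cd x_p$ with $p\ge n$, then, because there is no red cycle of length at least $n$, P\'osa rotations fixing one endpoint produce a set $S$ of alternative endpoints such that the other endpoint is red-adjacent to none of $S$; hence that endpoint is blue-complete to $S$, and, iterating the rotations, I would extract a rich blue bipartite structure along the path which, together with the blue edges to the outside vertices, can be assembled into a blue cycle. The final step is to trim this blue cycle to length exactly $k$, using that $k$ is even and that $N-n=k/2-1$ provides precisely the extra room needed.

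The main obstacle is this last case, and specifically two linked difficulties: guaranteeing that the P\'osa-rotation endpoint set $S$ is large enough (equivalently, that $\Gr$ is close enough to having a long cycle to force many blue edges), and controlling the blue cycle's length so that it is exactly $k$ rather than merely large --- the parity of $k$ and the tight vertex count $N=n+k/2-1$ must be exploited here, and a careful case analysis according to whether $C^*$ is $2$-connected in red is likely unavoidable. The biclique dichotomy of the first two paragraphs is routine; it is the exact-length blue cycle, extracted from an almost-long red path, that forms the technical heart of the argument.
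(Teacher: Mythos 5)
First, note that the paper contains no proof of this lemma to compare against: it is imported verbatim from \cite[Lemma~3.1]{K-R}, so your attempt must stand on its own. Its first half does: assuming no monochromatic cycle of length at least $n$, if every red component has order at most $n-1$, your partial-sum argument is correct and complete --- if no partial sum of component orders lands in $[k/2,\,n-1]$, the jump that crosses this interval is a single component of order $c$ with $n-k/2<c\le n-1$, and $n\ge k$ gives $c\ge k/2$, so taking $X$ to be that component (or the appropriate partial union) yields a blue $K_{|X|,|Y|}$ with both sides at least $k/2$, hence a blue $C_k$ via $K_{k/2,k/2}$. Nothing is wrong there.

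The genuine gap is the entire case of a red component $C^*$ on $m\ge n$ vertices, which you present as a plan rather than a proof --- as you yourself concede (``the main obstacle'', ``likely unavoidable''). Concretely, three steps would fail as written. (i) The rotation step only gives blue edges from the \emph{fixed} endpoint $x_1$ to the rotation endpoints (a red edge $x_1y$ would close a red cycle of length $\ge n$); that is a blue star, not the ``rich blue bipartite structure'' your assembly needs, and P\'osa's lemma bounds $|N(S)|$ only in terms of $|S|$ and red degrees, over which you have no control here. (ii) ``Trimming'' a long blue cycle to length \emph{exactly} $k$ is not a legitimate move: exact even cycle lengths require weak-pancyclicity-type input (compare Bondy's theorem, which this paper invokes in the far more special situation of Lemma~\ref{chords}), and a blue graph of circumference at most $n-1$ need not contain a $C_k$ just because it contains some long cycle. (iii) Likewise, the Erd\H{o}s--Gallai/Tur\'an branch only yields that the blue graph is dense, and density alone does not produce a cycle of a prescribed even length. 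Note also that the outside vertices cannot rescue the assembly: there are at most $k/2-1$ of them, while an alternating blue $C_k$ through them would need $k/2$. This exact-length extraction inside a large red component of bounded red circumference is precisely where the actual proof in \cite{K-R} spends its effort, via a careful induction with several structural claims; your outline correctly isolates the difficulty but does not overcome it, so the proposal is an outline, not a proof.
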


\begin{Remark}
Of course, if $G$ contains no monochromatic cycle of length at least $n$, then we also have a red $C_k$. However, we only need the lemma as stated above.
\end{Remark}

\begin{Lemma}[\mbox{\cite[Lemma~2.1]{K-R}}] \label{Lemma 2.1}
Let $G$ be a red-blue graph.
\begin{itemize}
  \item[(a)] If $G$ contains a monochromatic $C_{2n+1}$ for some $n \geq 3$, then $G$ contains a monochromatic $C_{2n}$.
  \item[(b)] If $G$ contains a monochromatic $C_{2n}$ for some $n \geq 3$, then $G$ contains a monochromatic $C_{2n-2}$.
\end{itemize}
\end{Lemma}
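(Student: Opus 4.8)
The plan is to exploit the fact that a red-blue graph is \emph{complete}, so that every chord of a monochromatic cycle is present and carries a colour; repeatedly \emph{short-cutting} a monochromatic cycle along a chord of its own colour will be the main engine. Concretely, if $C=v_1v_2\cd v_mv_1$ is a monochromatic cycle (say red) and a chord $v_iv_{i+j}$ is also red, then replacing the arc $v_iv_{i+1}\cd v_{i+j}$ by that chord yields a red cycle on $m-(j-1)$ vertices. Thus a red $j$-chord shortens a red cycle by exactly $j-1$; in particular a red $2$-chord deletes one vertex and a red $3$-chord deletes two. The parity of the cycle length, via $\gcd(2,2n+1)=1$ versus $\gcd(2,2n)=2$, is what separates the two parts.

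For (a), take a monochromatic $C_{2n+1}$, say red, written $v_1v_2\cd v_{2n+1}v_1$ with indices modulo $2n+1$. First I would examine its $2n+1$ many $2$-chords $v_iv_{i+2}$. If any one of them is red, short-cutting gives a red $C_{2n}$ and we are done, so I may assume every $2$-chord is blue. Since $\gcd(2,2n+1)=1$, the map $v_i\mapsto v_{i+2}$ is a single $(2n+1)$-cycle, so the blue $2$-chords already form a blue Hamiltonian cycle $Z$ on the same vertex set, and it remains to extract a \emph{blue} $C_{2n}$. I would short-cut $Z$: a $2$-chord of $Z$ is a $4$-chord $v_iv_{i+4}$ of the original cycle, so if some $4$-chord is blue we get a blue $C_{2n}$. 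Otherwise all $4$-chords are red, and I would build a blue $C_{2n}$ directly from the blue $2$-chords, dropping one vertex and closing the gap with a single longer blue chord, splitting into the few sub-cases according to the colours of the $3$-chords (for $C_7$, for instance, one checks by hand that either the $2$-chords together with one blue $3$-chord give a blue $C_6$, or the remaining red chords give a red $C_6$).

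For (b), start from a monochromatic $C_{2n}=v_1\cd v_{2n}v_1$ (say red) with $n\geq 3$. If some $2$-chord is red, short-cutting produces a red $C_{2n-1}$, that is, a red $C_{2(n-1)+1}$, and for $n\geq 4$ part (a) applied to this odd cycle yields a monochromatic $C_{2(n-1)}=C_{2n-2}$. If instead some $3$-chord is red, short-cutting along it removes two vertices and gives a red $C_{2n-2}$ outright. In the remaining case every $2$-chord and $3$-chord is blue, and I would assemble a blue $C_{2n-2}$ from these (here $\gcd(2,2n)=2$ splits the blue $2$-chords into the two $n$-cycles on the even- and odd-indexed vertices, which must be merged, after deleting two vertices, using blue $3$-chords). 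The single base case $n=3$, namely $C_6\to C_4$, is not covered by the reduction to (a), and I would dispatch it by direct inspection: a red $3$-chord gives a red $C_4$, two disjoint red $2$-chords give a red $C_4$, and otherwise enough chords are blue to exhibit a blue $C_4$.

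I expect the main obstacle to be the ``all chords of the opposite colour'' sub-cases at the end of (a) and of (b), where no single short-cut reaches the target length and one must instead \emph{construct} a cycle of the exact even length $2n$ (resp.\ $2n-2$) in the opposite colour. The parity constraint is essential here: it is precisely because $2n+1$ is odd that the opposite-colour $2$-chords wrap into one long cycle rather than two, and pulling an even cycle out of it (or merging the two $n$-cycles in part (b)) forces a short but careful case analysis on the colours of the $3$- and $4$-chords. Everything else reduces to the mechanical short-cutting principle.
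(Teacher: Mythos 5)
Note first that the paper does not prove this lemma at all: it is quoted verbatim from K\'arolyi and Rosta (their Lemma~2.1), so the only comparison possible is against your argument on its own terms. Your short-cutting engine is the right one, and your skeleton for (b) is essentially sound (red $2$-chord gives a red $C_{2n-1}$ and part (a) applies when $n\geq 4$; red $3$-chord gives a red $C_{2n-2}$ outright; in the all-blue case the two $n$-cycles of $2$-chords can indeed be merged by two blue $3$-chords, e.g.\ $v_1v_{2n-1}v_{2n-3}\cdots v_5v_2v_4\cdots v_{2n-2}v_1$, omitting $v_3$ and $v_{2n}$). But there is a genuine gap at the crux of (a). In your terminal sub-case (all $2$-chords blue, all $4$-chords red), the claim that ``the blue $2$-chords together with one blue $3$-chord give a blue $C_{2n}$'' is false for every $n\geq 4$. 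The blue $2$-chords form a single Hamiltonian cycle $Z$ of length $2n+1$, and a $3$-chord $v_iv_{i+3}$ of $C$ joins two vertices at distance $n-1$ (equivalently $n+2$) along $Z$; short-cutting $Z$ along it therefore yields blue cycles of lengths $n$ and $n+3$ only, and $n+3=2n$ exactly when $n=3$ --- which is precisely why your $C_7$ hand-check misleads you. Worse, the blue graph consisting of all $2$-chords plus a \emph{single} $3$-chord contains no $C_{2n}$ at all: such a cycle would delete one vertex of $Z$ and re-close the remaining path through the unique chord, which would require the chord to join the two $Z$-neighbours of the deleted vertex (at $Z$-distance $2$), not $n-1$. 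The workable dichotomy is the opposite of yours: if \emph{some} $3$-chord is red, it combines with one red $4$-chord and arcs of $C$ to give a red $C_{2n}$ (e.g.\ $v_0v_4v_3v_2v_5v_6\cdots v_{2n}v_0$, omitting $v_1$); only when \emph{all} $3$-chords are blue does a blue $C_{2n}$ emerge, and then one needs many of them, e.g.\ $v_1v_3\cdots v_{2n-1}v_2v_4\cdots v_{2n}v_1$ (omitting $v_0$), which uses the $3$-chord $v_{2n-1}v_2$ and the $2$-chord $v_{2n}v_1$.

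A smaller but real slip occurs in your $n=3$ base case of (b): ``two disjoint red $2$-chords'' is not the correct red alternative. Take a red $C_6$ whose red $2$-chords are exactly the triangle $v_1v_3,v_3v_5,v_5v_1$, all other chords blue. No two of these are vertex-disjoint and no $3$-chord is red, so your trichotomy consigns this colouring to the ``exhibit a blue $C_4$'' case --- but the blue chord graph is the triangle $v_2v_4v_6$ plus the $3$-chord matching $v_1v_4,v_2v_5,v_3v_6$, which contains no $4$-cycle (three of its vertices have degree $1$). The monochromatic $C_4$ here is red, e.g.\ $v_1v_3v_4v_5v_1$; in general \emph{any} two red $2$-chords of a red $C_6$, disjoint or not, already force a red $C_4$ (for $v_1v_3$ and $v_3v_5$, take $v_3v_5v_6v_1v_3$). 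Both defects are repairable within your chord-chasing framework, but as written the decisive sub-case of (a) rests on an identity ($n+3=2n$) that holds only at $n=3$, and the base case of (b) misassigns a family of colourings to the wrong colour.
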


\begin{Lemma}[special case of \mbox{\cite[Lemma~3.3]{K-R}}] \label{Lemma 3.3}
Let $n \geq k \geq 4$ with $k \equiv 0$. If a red-blue graph contains a blue $C_n$, then it contains either a red $C_n$ or a blue $C_k$.
\end{Lemma}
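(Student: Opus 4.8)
The statement to prove is Lemma~\ref{Lemma 3.3}: for $n \geq k \geq 4$ with $k$ even, if a red-blue graph contains a blue $C_n$, then it contains either a red $C_n$ or a blue $C_k$.

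\medskip

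The plan is to argue on the blue $n$-cycle $C = x_1 x_2 \cd x_n x_1$ directly, using the chords of $C$ to either build a short blue cycle of the prescribed even length $k$ or else force a long red cycle. First I would observe that if $C$ has \emph{any} blue $j$-chord $x_i x_{i+j}$, then $C$ splits into two blue arcs of lengths $j$ and $n-j$, each closed up by that chord into a blue cycle of length $j+1$ and $n-j+1$ respectively. Since $k$ is even and $4 \le k \le n$, the natural strategy is to look for a blue chord that directly yields a blue $C_k$: a blue $(k-1)$-chord produces a blue cycle on $k$ vertices. More generally, I would try to show that the blue chords of $C$, together with $C$ itself, give enough flexibility to realise a blue cycle of every length of the correct parity in the available range, and in particular length $k$; the evenness of $k$ matches the parity constraints that arise when one alternates between cycle edges and a single chord.

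The key dichotomy I expect to exploit is red versus blue on the chords. Consider the chords needed to shorten $C$ down to length $k$. If the relevant chords are blue, we directly obtain a blue $C_k$ (or a blue cycle that can be further shortened by a parity-preserving step to reach length $k$). If instead some of these chords are red, then I would combine a red chord with red edges elsewhere to assemble a long red cycle. Concretely, a red $j$-chord $x_i x_{i+j}$ of the blue cycle $C$ is a red edge joining two vertices at distance $j$ along $C$; by pairing such red chords with further red edges among the $x_t$, one aims to stitch together a red cycle using most or all of the $n$ vertices, giving a red $C_n$. The evenness of $k$ is what lets the parities line up: shortening a cycle by absorbing a chord changes the length in a controlled way, and working modulo $2$ (as the paper emphasises, all congruences are mod $2$) keeps us landing on even lengths, so that the target $k \equiv 0$ is reachable whenever we are in the blue case.

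The main obstacle, I expect, is the bookkeeping that guarantees we can \emph{exactly} hit length $k$ rather than merely producing some short blue even cycle or some long red cycle of the wrong length. In particular, once a blue even cycle shorter than $n$ is found, one must descend to length exactly $k$; here I would invoke Lemma~\ref{Lemma 2.1}(b), which lets us step down from a monochromatic $C_{2m}$ to a monochromatic $C_{2m-2}$, to walk the length down by $2$ at a time from $n$ (if $n$ is even) or from $n-1$ (reached via a single chord, if $n$ is odd) until we reach $k$. The delicate part is ensuring at each descent step that we remain in blue rather than accidentally only obtaining the red version; since this is stated as a special case of \cite[Lemma~3.3]{K-R}, I would expect the full argument there to track the colour carefully through each reduction, and the cleanest route is to first secure a blue even cycle of length between $k$ and $n$ (using the structure of chords of the blue $C_n$), and then apply Lemma~\ref{Lemma 2.1}(b) repeatedly, checking that the colour is preserved, to land on blue $C_k$. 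The alternative branch, in which the requisite chords are red, should terminate quickly in a red $C_n$.
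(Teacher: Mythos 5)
There is no in-paper proof to compare against here: the paper deliberately imports this statement from K\'arolyi--Rosta rather than proving it, so your proposal must stand on its own. It does not, and the gap is exactly the step you yourself defer to the cited source. Your descent strategy rests on Lemma~\ref{Lemma 2.1}(b), but that lemma is \emph{monochromatic}, not colour-preserving: from a monochromatic $C_{2m}$ it yields a monochromatic $C_{2m-2}$ possibly of the other colour. If the colour flips anywhere in the descent from $n$ (or $n-1$) down to $k$, you are left with a \emph{red} even cycle of length strictly between $k$ and $n$, which satisfies neither disjunct of the conclusion --- you need a red cycle of length exactly $n$, and there is no mechanism in your sketch for growing a shorter red cycle back up. Writing ``I would expect the full argument there to track the colour carefully'' is an admission that the essential content of the lemma is being assumed rather than proved; indeed the paper's own use of Lemma~\ref{Lemma 2.1} (in the proof of \Pro~\ref{C1C2}) keeps cycles of \emph{both} colours in the target family precisely because the colour cannot be tracked.

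The red branch is equally unsubstantiated. ``By pairing such red chords with further red edges among the $x_t$, one aims to stitch together a red cycle'' is not a construction, and the obvious attempt fails: even in the most favourable case, where \emph{every} $(k-1)$-chord of the blue $C_n$ is red (which is forced, since a blue $(k-1)$-chord closes a blue $C_k$), these chords form a circulant that decomposes into $\gcd(n,k-1)$ disjoint red cycles of length $n/\gcd(n,k-1)$ --- a red $C_n$ only when $\gcd(n,k-1)=1$ (for $n=9$, $k=4$ you get three red triangles). Nor can you induct on a blue cycle $C_{j+1}$ cut off by a blue chord, since the inductive conclusion would be a red $C_{j+1}$, again useless. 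The one sub-case that closes easily is when \emph{all} chords of $C$ are red: then for $n\geq 6$ the paper's Lemma~\ref{chords} makes $G[V(C)]$ red pancyclic, giving a red $C_n$. The genuinely hard configuration --- a mixture of red and blue chords with $\gcd(n,k-1)>1$ --- is untouched by your outline, and handling it is why the lemma is cited from \cite{K-R} rather than reproved.
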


The following two results will be used in the proof of \Pro~\ref{g1>=5g2=3}.

\begin{Lemma} \label{Lemma X}
Let $G$ be a red-blue graph on $n$ vertices without blue $3$-cycles. Then $G$ is either red hamiltonian or blue bipartite.
\end{Lemma}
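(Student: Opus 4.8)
The plan is to pass to the complement. Since $G$ is complete, $\Gr$ and $\Gb$ are complements of each other; write $R=\Gr$ and $B=\Gb$, so $B$ is triangle-free and $R=\ol B$ on $V(G)$. The key observation is that an independent set of $R$ is exactly a clique of $B$, so $\alpha(R)=\omega(B)$, where $\alpha$ denotes the independence number and $\omega$ the clique number. As $B$ is triangle-free, $\omega(B)\leq 2$. I would first dispose of the degenerate cases: if $B$ has no edge then $R$ is complete, hence hamiltonian (for $n\geq 3$), while for $n\leq 2$ the graph $B$ is trivially bipartite. Otherwise $B$ has an edge and $\alpha(R)=\omega(B)=2$.

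The second step is to invoke the Chv\'atal--Erd\H{o}s theorem: any graph on at least three vertices whose connectivity $\kappa$ is at least its independence number is hamiltonian. If $R$ is $2$-connected, then (any $2$-connected graph has at least three vertices, so the hypotheses are met) $\alpha(R)=2\leq\kappa(R)$ forces $R$ hamiltonian, and we are done. It therefore remains to treat the case $\kappa(R)\leq 1$, where I claim the triangle-free hypothesis forces $B$ to be bipartite, so that the second alternative of the lemma holds.

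The heart of the argument is a short case analysis when $\kappa(R)\leq 1$. Suppose first that $R$ is disconnected. All edges between distinct components of $R$ are blue, so if $R$ had at least three components, one vertex from each of three of them would form a blue triangle; hence $R$ has exactly two components $V_1,V_2$. A blue edge inside some $V_i$, together with any vertex of the other part, would likewise give a blue triangle, so each $V_i$ is blue-independent and $B$ is bipartite. Suppose instead that $R$ is connected with a cut vertex $x$, and apply the same reasoning to the components $D_1,\dots,D_m$ of $R-x$, whose mutual edges are all blue: as before $m=2$ and each $D_i$ is blue-independent. Finally, $x$ cannot be blue-adjacent to vertices in both $D_1$ and $D_2$, since such a pair lies in different components, is therefore blue-adjacent to each other, and would complete a blue triangle with $x$; placing $x$ in the part containing none of its blue neighbours then exhibits $B$ as bipartite.

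The remaining verifications---disjointness of the asserted classes and the small-$n$ bookkeeping---are immediate, so the only substantive step, and the one I expect to carry the weight, is the reduction via Chv\'atal--Erd\H{o}s, which converts the hamiltonicity question into a pure connectivity question; once $\kappa(R)\leq 1$ the triangle-free hypothesis rapidly collapses the structure into two blue-independent sets. One could instead prove by hand that a $2$-connected graph with $\alpha=2$ is hamiltonian, but invoking the theorem keeps the proof short.
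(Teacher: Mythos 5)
Your proof is correct, and it takes a genuinely different route from the paper's. The paper argues by induction on $n$: it takes a shortest odd blue cycle $C$ (all of whose chords are forced red, so the $m$-chords already form a red Hamilton cycle on $V(C)$), uses the induction hypothesis to get a red Hamilton cycle of $G-x$ for $x \notin V(C)$, and then runs a rotation argument — if $x$ had red neighbours $v_i,v_j$ with $v_{i+1}v_{j+1}$ red there would be a red $C_{n+1}$, so the successors of the red neighbours of $x$ form a blue clique, forcing $x$ to have at most two red neighbours and hence two consecutive blue neighbours on $C$, which yields a blue triangle. You instead complement: $\alpha(\Gr)=\omega(\Gb) \leq 2$, so the Chv\'atal--Erd\H{o}s theorem converts red hamiltonicity into the purely structural condition $\kappa(\Gr) \geq 2$, and your direct analysis of $\kappa(\Gr) \leq 1$ (exactly two components, or two components after deleting a cut vertex, each blue-independent, with the blue neighbours of the cut vertex confined to one side) is complete and correct; in particular the trichotomy is exhaustive, since for $n \geq 3$ a connected graph is either $2$-connected or has a cut vertex, and you dispose of $n \leq 2$ and edgeless $\Gb$ separately. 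As for what each approach buys: yours is non-inductive and shorter, at the price of importing Chv\'atal--Erd\H{o}s — stylistically consistent with the paper, which likewise invokes Bondy's theorem to shorten the proof of Lemma~\ref{chords} — and it delivers slightly more structural information in the non-hamiltonian case, namely an explicit bipartition of $\Gb$; the paper's proof is self-contained, and its rotation step is of the same flavour as the cycle-extension arguments it uses elsewhere (e.g.\ Lemma~\ref{old}).
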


\begin{proof}
We use induction on $n$. If $n \leq 4$, then $G$ is blue bipartite. Assume now that the statement holds for some $n \geq 4$, and let $|G|=n+1$. If $G$ is not blue bipartite, let $C=x_1x_2 \cd x_{2m+1}x_1$ be a shortest odd blue cycle in $G$; note that $m \geq 2$.

Were some chord of $C$ blue, $G$ would contain an odd blue cycle shorter than $C$, whence all chords of $C$ are red. In particular, the $m$-chords of $C$ form a red $C_{2m+1}$. If $V(C)=V(G)$, this is a red $C_{n+1}$. If not, take $x \in G-V(C)$. By the induction hypothesis, $G-x$ has a red $C_n$, say $v_1v_2 \cd v_nv_1$.

Assume $G$ is not red hamiltonian. Suppose $xv_i$ and $xv_j$ are red; note that $|i-j| \geq 2$. If $v_{i+1}v_{j+1}$ is red, then $G$ has a red $C_{n+1}$. Hence, $G$ contains a blue $d$-clique, where $d$ is the number of red neighbours of $x$. Therefore, since $G$ contains no blue $C_3$, $d \leq 2$. Hence, $x$ is blue adjacent to two consecutive vertices of $C$, yielding a blue $C_3$, a contradiction.
\end{proof}

\begin{Lemma} \label{chords}
Let $G$ be a red-blue graph and let $n \geq 6$. If $G$ contains an $n$-cycle $C$, all of whose chords are red, then $G[V(C)]$ is red pancyclic.
\end{Lemma}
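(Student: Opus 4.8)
The plan is to reduce the statement to a purely structural fact about the complement of a Hamilton cycle. Write $C=x_1x_2\cdots x_nx_1$. Since every chord $x_ix_{i+j}$ with $2\le j\le n-2$ is red, the red subgraph of $G[V(C)]$ contains every edge of $K_n$ except possibly the $n$ edges $x_1x_2,\dots,x_{n-1}x_n,x_nx_1$ of $C$ itself; in particular it contains the complement $K_n-C$ of the Hamilton cycle $C$. Consequently it suffices to exhibit, for each $\ell$ with $3\le\ell\le n$, a cycle of length $\ell$ using only chords of $C$, since such a cycle is automatically red. I would organise this by the size of $\ell$.

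For the short cycles, $3\le\ell\le\lfloor n/2\rfloor$, the polygon $x_1x_3x_5\cdots x_{2\ell-1}x_1$ works: consecutive vertices are joined by $2$-chords and the closing edge $x_{2\ell-1}x_1$ is a $(2\ell-2)$-chord, so (using $n\ge6$, which forces $2\le 2\ell-2\le n-2$) all its edges are chords. The only small length this misses is $\ell=4$ with $n\in\{6,7\}$, handled directly by the red $4$-cycle $x_1x_4x_2x_5x_1$, whose index-differences $3,2,3,4$ are all in $[2,n-2]$ once $n\ge6$. For the intermediate cycles, $5\le\ell\le n-1$, I would restrict to the consecutive block $S=\{x_1,\dots,x_\ell\}$: inside $S$ the only non-chords are the path edges $x_1x_2,\dots,x_{\ell-1}x_\ell$, so the red subgraph on $S$ contains $K_\ell$ minus a Hamilton path $P_\ell$, and the odd--even ordering $x_1x_3\cdots\,x_2x_4\cdots$ gives a Hamilton cycle of $K_\ell-P_\ell$ (every step differs by $2$ within the odd and even blocks, and the junction and wrap differences are at least $3$ and at least $2$, hence never $1$, precisely because $\ell\ge5$), i.e.\ a red $C_\ell$.

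The remaining and most delicate case is $\ell=n$, where I must produce a red Hamilton cycle, i.e.\ a Hamilton cycle of $K_n-C$. When $n$ is odd the step-$2$ rotation $x_1x_3x_5\cdots$ visits all vertices (since $\gcd(2,n)=1$) using only $2$-chords and is such a cycle. When $n$ is even this rotation splits into two cycles, so a separate construction is needed; one that works is $x_1x_3\cdots x_{n-1}\,x_2\,x_nx_{n-2}\cdots x_4\,x_1$, whose consecutive index-differences are $2$, $n-3$, $n-2$, $2$, and $3$, each having cyclic distance in $[2,n-2]$ for $n\ge6$, so every edge is a chord. I expect this Hamiltonian, even-$n$ case to be the main obstacle: the clean "take every other vertex'' idea cannot reach lengths exceeding $n/2$, and the degenerate behaviour of $K_\ell-P_\ell$ for $\ell\in\{3,4\}$ is exactly what forces the few explicit patches above.

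As a cross-check, and a possible streamlining, I would note an alternative route to the same conclusion: $K_n-C$ has minimum degree $n-3\ge n/2$ for $n\ge6$, hence is Hamiltonian by Dirac's theorem, and it has $n(n-3)/2\ge n^2/4$ edges and is non-bipartite (it contains the triangle $x_1x_3x_5$), so it is pancyclic by Bondy's theorem. Since the red subgraph of $G[V(C)]$ contains $K_n-C$, this again yields red pancyclicity. I would nonetheless present the explicit cycles above as the self-contained argument, matching the elementary style of the preceding lemmas, and relegate the Dirac--Bondy shortcut to a remark.
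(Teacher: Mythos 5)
Your proposal is correct, and the alternative you relegate to a final remark is essentially the paper's actual proof, while your main argument is a genuinely different, more elementary route. The paper's proof is short: it exhibits a red Hamilton cycle on $V(C)$ using exactly your two $\ell=n$ constructions ($x_1x_3\cdots x_nx_2x_4\cdots x_{n-1}x_1$ for odd $n$, and $x_1x_3\cdots x_{n-1}x_2x_nx_{n-2}\cdots x_4x_1$ for even $n$), notes that the red triangle $x_1x_3x_5x_1$ makes the red graph non-bipartite, and then invokes Bondy's result (Lemma~\ref{Bondy}; the hypothesis is met since the chords contribute $n(n-3)/2\geq n^2/4$ edges precisely when $n\geq 6$) to upgrade hamiltonicity to pancyclicity in one stroke; it does not need Dirac's theorem, since hamiltonicity is obtained by construction. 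You instead build every length $3\leq\ell\leq n$ by hand: the even-index polygons for $\ell\leq\lfloor n/2\rfloor$, the consecutive-block argument realising a Hamilton cycle of $K_\ell$ minus a Hamilton path for $5\leq\ell\leq n-1$, the explicit patch $x_1x_4x_2x_5x_1$ for $(\ell,n)$ with $\ell=4$ and $n\in\{6,7\}$, and the two Hamiltonian constructions -- and I have checked that all your index-difference verifications are sound (in particular that $x_\ell x_1$ is a chord for $5\leq\ell\leq n-1$, so the block really is $K_\ell$ minus a path, and that the ranges $[3,\lfloor n/2\rfloor]$, the patch, $[5,n-1]$, and $\{n\}$ together cover all lengths for every $n\geq 6$). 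What your main argument buys is self-containedness: no appeal to Bondy's metatheorem, at the cost of a four-way case analysis. What the paper's (and your remark's) route buys is brevity, since Bondy's theorem absorbs all lengths below $n$ at once; the paper accepts the dependence on Lemma~\ref{Bondy} because it is needed to keep the proof of Lemma~\ref{chords} short. Either version is a complete and valid proof.
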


In order to get a short proof, we use the following result of Bondy.

\begin{Lemma}[\mbox{\cite[\Th~1]{Bondy}}] \label{Bondy}
Let $G$ be hamiltonian with $n$ vertices and at least $n^2/4$ edges. Then either $G$ is pancyclic or $G=K_{n/2,n/2}$.
\end{Lemma}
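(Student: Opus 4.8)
The plan is to fix a Hamilton cycle $C=v_1v_2\cd v_nv_1$ (which exists since $G$ is hamiltonian) and to produce, by descent, a cycle of every length $\ell$ with $3\le\ell\le n-1$; the length $n$ is already witnessed by $C$. The organising observation is that the threshold $n^2/4$ is exactly the number of edges of $K_{n/2,n/2}$, and that $K_{n/2,n/2}$ is hamiltonian but triangle-free, so it is the unique extremal obstruction to pancyclicity. Accordingly, I would first dispose of the bipartite case: if $G$ contains no triangle, then by Mantel's theorem $|E(G)|\le n^2/4$, with equality if and only if $G=K_{n/2,n/2}$; since $|E(G)|\ge n^2/4$ by hypothesis, $G=K_{n/2,n/2}$ and we are in the exceptional conclusion. (When $n$ is odd this case cannot occur, since then $\lfloor n^2/4\rfloor<n^2/4$, as it should not.) It therefore remains to prove that a hamiltonian graph with at least $n^2/4$ edges that contains a $C_3$ is pancyclic.

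For this main case I would argue by downward induction on the cycle length. Suppose $G$ contains a cycle $Z=u_1u_2\cd u_\ell u_1$ with $4\le\ell\le n$; the induction step must produce a cycle of length $\ell-1$. The cheapest way to shorten $Z$ by one is a span-$2$ chord: if some $u_iu_{i+2}\in E(G)$, then replacing the path $u_iu_{i+1}u_{i+2}$ of $Z$ by the chord $u_iu_{i+2}$ (omitting $u_{i+1}$) yields a cycle of length $\ell-1$. Thus the heart of the matter is to guarantee such a short chord, or an equally effective local reconfiguration, from the global edge count. I would leverage $|E(G)|\ge n^2/4$ by a counting argument around the cycle, combining the chords of $Z$ with the edges between $V(Z)$ and $V(G)\setminus V(Z)$: if no length-$(\ell-1)$ cycle could be formed anywhere, the forced chord and crossing-edge deficiencies would drive $|E(G)|$ to at most $n^2/4$, with equality only for the balanced complete bipartite graph. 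Since $G$ contains a triangle we have $G\ne K_{n/2,n/2}$, so this extremal situation is excluded and the descent proceeds; the triangle also anchors the bottom of the induction, so cycles of all lengths down to $3$ are obtained.

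The main obstacle is exactly this descent step: converting the aggregate inequality $|E(G)|\ge n^2/4$ into a usable \emph{local} statement that supplies a cycle of length $\ell-1$ at every scale $\ell$. A single short chord is not guaranteed by the edge count alone, so the delicate part is arranging the simultaneous count around $Z$ so that the only way to meet the edge bound while missing some intermediate length is to be $K_{n/2,n/2}$ — that is, controlling the boundary case of the inequality. Once that extremal analysis is in place, everything else (the Mantel reduction and the span-$2$ chord construction) is routine.
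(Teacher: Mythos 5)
You are attempting to reprove a result that the paper deliberately does not prove: Lemma~\ref{Bondy} is imported verbatim from Bondy (\cite[Theorem~1]{Bondy}) precisely so that the proof of Lemma~\ref{chords} can be kept short, so your attempt must be judged as a standalone proof of Bondy's theorem, and as such it has a genuine gap. Your first paragraph (the Mantel reduction of the triangle-free case, including the parity remark for odd $n$) is correct and is indeed how the exceptional case $K_{n/2,n/2}$ is isolated. But the main case is never proved: you yourself flag the descent step --- ``arranging the simultaneous count around $Z$'' so that missing a length forces $e(G)\le n^2/4$ --- as the ``main obstacle,'' and then assert that once it is in place everything is routine. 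That counting argument is not a detail to be deferred; it is the entire content of the theorem. No concrete inequality is stated, let alone verified, and no equality analysis is carried out, so what you have written is a plan, not a proof.

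Moreover, the plan's local shape is doubtful as stated. The inductive step ``from an arbitrary cycle $Z$ of length $\ell$ extract a cycle of length $\ell-1$'' cannot be powered by the global hypothesis $e(G)\ge n^2/4$, because that hypothesis says nothing about the distribution of edges near a particular chosen $Z$: a short cycle in a dense hamiltonian graph may be chordless and have few edges to $V(G)\setminus V(Z)$, with the edge mass sitting elsewhere, so any count ``around $Z$'' for a fixed $Z$ can fail even though the theorem is true. The classical argument is instead global in a different sense: one supposes a single length $\ell$ is \emph{missing}, works on the Hamilton cycle $v_1v_2\cdots v_nv_1$ itself, and observes that certain pairs of chords --- pairs whose simultaneous presence would close a cycle of length exactly $\ell$ out of two chords and two arcs of the Hamilton cycle --- are mutually exclusive; this yields degree-sum bounds of the form $d(v_i)+d(v_{i+c})\le n$ for a suitable shift $c$ depending on $\ell$, which sum over $i$ to $e(G)\le n^2/4$, and the case of equality is then analysed to force the balanced complete bipartite structure. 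Until you exhibit those forbidden chord pairs and carry out the summation and the equality analysis, the proposal does not establish the lemma; it only describes where a proof would have to live.
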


\begin{proof}[Proof of Lemma~\ref{chords}]
Let $C=x_1x_2 \cd x_nx_1$. Then
\[
x_1x_3 \cd x_nx_2x_4 \cd x_{n-1}x_1
\]
is a red $C_n$ if $n$ is odd, and
\[
x_1x_3 \cd x_{n-1}x_2x_nx_{n-2} \cd x_4x_1
\]
is a red $C_n$ if $n$ is even. Since $x_1x_3x_5x_1$ is a red $C_3$, $G$ is not red bipartite. Hence, it follows from Lemma~\ref{Bondy} that $G[V(C)]$ is red pancyclic.
\end{proof}

Our next result essentially proves \Pro~\ref{g1>=6g2=5}.

\begin{Prop} \label{new}
Suppose $\Gb$ contains an odd cycle but no $5$-cycle. Then $\Gr$ contains cycles of every length in the interval $[6,|G|-2]$.
\end{Prop}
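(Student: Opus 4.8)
My plan is to locate a long red cycle together with enough structure to run a pancyclicity argument. First I would pass to a shortest odd blue cycle $C=z_1z_2\cdots z_\ell z_1$. Since $\Gb$ has no $5$-cycle, $\ell\neq 5$, so either $\ell=3$ or $\ell\geq 7$; and since any blue chord of $C$ would yield a shorter odd blue cycle, every chord of $C$ is red. When $\ell\geq 7$ this is exactly the hypothesis of Lemma~\ref{chords}, so $\Gr[V(C)]$ is red pancyclic; in particular $\Gr$ already contains red cycles of every length in $[6,\ell]$, and $V(C)$ carries a red Hamiltonian cycle $R$ of length $\ell$. The remaining task is to enlarge $R$ one vertex at a time.

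The engine for enlargement is a simple insertion step: if $R$ is a red cycle and some vertex $y\notin V(R)$ has two \emph{consecutive} red neighbours $r_i,r_{i+1}$ on $R$, then replacing the edge $r_ir_{i+1}$ by $r_iyr_{i+1}$ produces a red cycle one longer. Starting from the seed $R$ and repeatedly inserting, I obtain red cycles of every length $\ell,\ell+1,\ell+2,\dots,t$, where $t$ is the length at which the process gets stuck; together with the range $[6,\ell]$ this gives red cycles of every length in $[6,t]$. Thus the whole statement reduces to the inequality $t\geq |G|-2$.

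To bound the number of leftover vertices, I would analyse the stuck configuration. If $y\notin V(R)$ cannot be inserted, then for every edge $r_ir_{i+1}$ of $R$ at least one of $yr_i,yr_{i+1}$ is blue, so the blue neighbours of $y$ on $R$ form a vertex cover of the cycle $R$ and hence number at least $|R|/2$. The heart of the argument is then to show, using that $\Gb$ is $5$-cycle-free, that at most two vertices can be this blue-heavy simultaneously: given three leftover vertices, each blue-adjacent to about half of $R$, I would extract a blue $C_5$ from their common blue neighbourhoods (a blue path of length three joining two blue neighbours of a third leftover vertex closes up into a blue $5$-cycle). Ruling out three leftover vertices forces $t\geq|G|-2$, as required. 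This $5$-cycle-free structural analysis — pinning down exactly how the blue neighbourhoods on $R$ can interact and always locating a blue $C_5$ when three vertices are stuck — is the step I expect to be the main obstacle, and it is presumably where the sharp value $|G|-2$ (attained when $\Gb=K_2+\overline{K}_{|G|-2}$) is forced.

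Finally I would dispose of the case $\ell=3$, where Lemma~\ref{chords} does not directly supply the low range $[6,\ell]$. Here I would instead exploit that a $5$-cycle-free $\Gb$ with a triangle is sparse away from at most two blue-heavy vertices, so that $\Gr$ contains a red $K_6$ (indeed a large red clique) to seed the small lengths, after which the same insertion process applies; alternatively, when $\Gb$ is sparse one may finish directly from Lemma~\ref{Bondy} by checking that the relevant red subgraph is Hamiltonian, has at least a quarter of all possible edges, and is not balanced complete bipartite. Small values of $|G|$ would be checked separately.
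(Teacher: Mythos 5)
There is a genuine gap, and it sits exactly where you flag ``the main obstacle'': the analysis of the stuck configuration is not just hard, the route you sketch for it is false. Your engine only extends a red cycle through a leftover vertex with two \emph{consecutive} red neighbours, and your claim that three stuck (blue-heavy) leftovers always yield a blue $C_5$ fails in two concrete ways. First, let $R=x_1x_2\cdots x_mx_1$ with $m$ even, let the leftovers be pairwise red-adjacent, and let $y_2$ be blue-adjacent exactly to the odd-indexed and $y_3$ exactly to the even-indexed vertices of $R$. Both blue neighbourhoods are vertex covers of $R$ (so your engine is stuck), the two vertices have no common blue neighbour, and no blue $C_5$ arises; the resolution is that a longer red cycle exists anyway, e.g.\ the \emph{two-vertex} insertion $x_1y_2x_3x_2y_3x_4x_5\cdots x_mx_1$, which single-vertex insertion never finds. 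Second, three leftovers can be blue-adjacent to \emph{all} of $R$ (think $\Gb \supseteq K_{3,|G|-3}$): again no blue $C_5$ exists, and this configuration is excluded only by the global hypothesis that $\Gb$ contains an odd cycle — your local ``extract a blue $C_5$ from common blue neighbourhoods'' argument never invokes non-bipartiteness, so it cannot rule this out. Relatedly, you conflate the length $t$ at which \emph{your process} gets stuck with the maximum red cycle length; the paper avoids this by working with a red cycle of genuinely maximal length, so that any extension is an outright contradiction.

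For comparison, the paper's proof of Claim~\ref{max} is built precisely to handle these escapes: Lemma~\ref{old} upgrades your insertion step to red neighbours at any distance $j\in\{1\}\cup[3,n-3]\cup\{n-1\}$ (itself a nontrivial blue-$C_5$ argument), ad hoc red $C_{n+2}$ constructions handle the two-vertex insertions in the cases $n=4,6$, and the final case — every leftover has at most one red neighbour on $C$ — ends by showing $G$ would be blue bipartite, which is where the odd-blue-cycle hypothesis is spent. Two further points of divergence: the paper does not grow upward from a pancyclic seed but deduces a red $C_{|G|-2}$ from Claim~\ref{max} together with Lemmas~\ref{new1} and~\ref{new2}, and then descends, obtaining every length in $[6,|G|-2]$ by repeated application of Lemma~\ref{new2}; and it seeds uniformly with a red $C_4$ via $R(C_4,C_5)=7$, whereas your $\ell=3$ fallback (a red $K_6$ from an unproved claim that a $C_5$-free $\Gb$ with a triangle is ``sparse away from at most two blue-heavy vertices'') is unsupported — complete bipartite blue graphs already show that $C_5$-freeness alone permits many blue-heavy vertices, so any such sparsity statement would again have to lean on non-bipartiteness. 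Your use of a shortest odd blue cycle plus Lemma~\ref{chords} to get the range $[6,\ell]$ is correct and is the same trick the paper uses in Proposition~\ref{g1>=5g2=3}, but as a proof of Proposition~\ref{new} the proposal is missing its core.
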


In order to prove it, we need the following three lemmas.

\begin{Lemma} \label{new1}
If $\Gb$ contains no $5$-cycle and $\Gr$ has an $n$-cycle, where $n \geq 5$, then $\Gr$ has an $(n-1)$-cycle or an $(n-2)$-cycle.
\end{Lemma}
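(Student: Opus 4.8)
The plan is to start with a red $n$-cycle $C=x_1x_2\cd x_nx_1$ in $\Gr$ and exploit the fact that, since $G$ is a complete red-blue graph, every chord of $C$ is either red or blue. The driving observation is a standard shortcutting argument: if the $2$-chord $x_ix_{i+2}$ is red, then deleting $x_{i+1}$ and inserting this chord turns $C$ into a red cycle on $n-1$ vertices; likewise, if the $3$-chord $x_ix_{i+3}$ is red, then deleting $x_{i+1}$ and $x_{i+2}$ yields a red cycle on $n-2$ vertices. Both shortcuts produce genuine cycles because $n\geq5$. Hence I may assume that \emph{every} $2$-chord and \emph{every} $3$-chord of $C$ is blue, and the whole task reduces to deriving a contradiction from this single standing assumption.

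The contradiction will come from producing a blue $5$-cycle, which is forbidden by hypothesis. First I would note that the five vertices $x_1,\dots,x_5$ are distinct (here $n\geq5$ is used again), and then check that the closed walk $x_1x_3x_5x_2x_4x_1$ is in fact a blue $5$-cycle: the edges $x_1x_3$, $x_3x_5$, $x_2x_4$ are $2$-chords and the edges $x_2x_5$, $x_1x_4$ are $3$-chords, so all five are blue by the standing assumption. This directly contradicts the hypothesis that $\Gb$ contains no $5$-cycle, and completes the argument.

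I do not expect a genuine obstacle here; the whole proof is a single case split on whether a short red chord exists, with the explicit blue $5$-cycle disposing of the remaining case. The only point needing care is the boundary value $n=5$, where $n-2=3$ so that a $3$-chord and a $2$-chord can coincide and the shorter red cycles become triangles. There one checks that the constructions still make sense: the $3$-chord shortcut produces a red triangle, and since every chord is simultaneously a $2$- and a $3$-chord it is blue, so the pentagram $x_1x_3x_5x_2x_4x_1$ is again a blue $5$-cycle. Thus the same two-line argument covers all $n\geq5$ uniformly.
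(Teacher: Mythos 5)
Your proof is correct and follows essentially the same route as the paper's: shortcut along a red $2$-chord or $3$-chord to get a red $C_{n-1}$ or $C_{n-2}$, and otherwise extract a blue $C_5$ from the all-blue short chords. The paper simply leaves the blue $5$-cycle implicit, whereas you exhibit it explicitly as the pentagram $x_1x_3x_5x_2x_4x_1$ and verify the boundary case $n=5$; both are fine.
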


\begin{Remark}
The assumption on $n$ is clearly necessary.
\end{Remark}

\begin{proof}
Let $C$ be a red $C_n$. If $C$ has a red $2$-chord or $3$-chord, then we have a red $C_{n-1}$ or $C_{n-2}$, respectively. Otherwise we have a blue $C_5$.
\end{proof}

\begin{Lemma} \label{new2}
If $\Gb$ contains no $5$-cycle and $\Gr$ has an $n$-cycle, where $7 \leq n \leq |G|-1$, then $\Gr$ has an $(n-1)$-cycle.
\end{Lemma}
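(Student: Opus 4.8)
The plan is to start from a red $n$-cycle $C=x_1x_2\cdots x_nx_1$ together with a vertex $w\notin V(C)$, which exists precisely because $n\le|G|-1$. First I would dispose of the easy case: if $C$ has a red $2$-chord $x_ix_{i+2}$, then $x_ix_{i+2}x_{i+3}\cdots x_{i-1}x_i$ omits only $x_{i+1}$ and is a red $(n-1)$-cycle, so we are done. Hence from now on I would assume that every $2$-chord of $C$ is blue; this is the single family of blue edges that will feed a blue $C_5$.

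The key observation is a rerouting through $w$: if $w$ has two red neighbours $x_i$ and $x_{i+3}$ at cyclic distance $3$, then $x_iwx_{i+3}x_{i+4}\cdots x_{i-1}x_i$ omits exactly the two vertices $x_{i+1},x_{i+2}$ and is therefore a red $(n-1)$-cycle. So it suffices to produce one such pair of red $w$-neighbours, and the rest of the proof shows that failing to do so forces a blue $C_5$, contradicting the hypothesis that $\Gb$ has no $5$-cycle.

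Assume then, for contradiction, that no two red neighbours of $w$ lie at cyclic distance $3$. Since all $2$-chords are blue, two \emph{blue} neighbours $x_i,x_{i+6}$ of $w$ would give the blue $5$-cycle $wx_ix_{i+2}x_{i+4}x_{i+6}w$, its three middle edges being blue $2$-chords; this is forbidden, so no two blue neighbours of $w$ are $6$ apart either. Now I would read off the colours of the edges $wx_j$ along one arithmetic progression of common difference $3$, say $x_i,x_{i+3},x_{i+6},\dots$: the first restriction says no two consecutive terms are both red neighbours, and the second says no two terms that are two apart are both blue neighbours. But if some term $x_{i+3t}$ were a red neighbour, its two progression-neighbours would both be blue neighbours (no two consecutive reds), and these are two apart in the progression, i.e. $6$ apart on $C$ --- impossible. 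Hence no term is a red neighbour, so every $wx_j$ is blue; then any two of them $6$ apart reproduce the forbidden blue $C_5$, the desired contradiction.

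The delicate point --- and the place where $n\ge7$ is really used --- is the blue $C_5$ construction: it needs $x_i,x_{i+2},x_{i+4},x_{i+6}$ to be four distinct vertices, which fails at $n=6$, and it needs the difference-$3$ progression to have length at least $3$ so that ``two apart'' is meaningful; both hold exactly when $n\ge7$. Everything else is routine bookkeeping modulo $n$. The only real content is recognising that the one guaranteed family of blue edges, the $2$-chords, already manufactures enough blue $5$-cycles to pin down the red neighbours of $w$ and thereby recover the $(n-1)$-cycle; this is also what lets us upgrade Lemma~\ref{new1}'s ``$C_{n-1}$ or $C_{n-2}$'' to a clean $C_{n-1}$ once an external vertex is available.
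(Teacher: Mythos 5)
Your proof is correct and takes essentially the same route as the paper's: after disposing of a red $2$-chord, both arguments use an external vertex $w$ and the same dichotomy, namely that two red $w$-neighbours at cyclic distance $3$ reroute to a red $C_{n-1}$, while two blue $w$-neighbours at distance $6$ combine with the blue $2$-chords to give a blue $C_5$. Your arithmetic-progression bookkeeping is just a slightly more explicit phrasing of the paper's observation that a red neighbour $x_i$ forces $wx_{i\pm 3}$ blue, producing the forbidden distance-$6$ blue pair.
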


\begin{Remark}
The red-blue graph whose red subgraph equals $K_{4,4}$ shows that the conditions $n \geq 7$ and $n \leq |G|-1$ cannot be omitted.
\end{Remark}

\begin{proof}
Let $C=x_1x_2 \cd x_nx_1$ be a red $C_n$, and take $x \in G-V(C)$. If $C$ has a red $2$-chord, then we have a red $C_{n-1}$, so suppose all $2$-chords are blue. Then, if $G$ contains no blue $C_5$, $x$ is red adjacent to some vertex of $C$. Hence, either there is an $i \in [n]$ such that $xx_i$ and $xx_{i+3}$ are red, or there is an $i \in [n]$ such that $xx_i$ and $xx_{i+6}$ are blue. In the former case, we have a red $C_{n-1}$, and in the latter case, we have a blue $C_5$.
\end{proof}

\begin{Lemma} \label{old}
Suppose $\Gb$ contains no $5$-cycle, $C=x_1x_2 \cd x_nx_1$ is a red $n$-cycle, and $x \in G-V(C)$. If there is a $j \in \{1\} \cup [3,n-3] \cup \{n-1\}$, such that for some $i \in [n]$, $xx_i$ and $xx_{i+j}$ are red, then $G$ contains a red $(n+1)$-cycle.
\end{Lemma}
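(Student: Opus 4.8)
The plan is to normalise the configuration by a rotation, dispose of the two boundary values of $j$ by a direct insertion, and then attack the range $3 \le j \le n-3$ with a chord-rerouting argument powered by the hypothesis that $\Gb$ has no $5$-cycle. First I would rotate indices so that $i=1$; thus $xx_1$ and $xx_{1+j}$ are red. If $j=1$ then $x$ is red-adjacent to the consecutive vertices $x_1,x_2$, and if $j=n-1$ then to $x_n,x_1$; in either case inserting $x$ into $C$ between these two vertices yields a red $(n+1)$-cycle. So assume $3 \le j \le n-3$. The target is then a red Hamiltonian cycle on $V(C)\cup\{x\}$: this set has exactly $n+1$ vertices, and in such a cycle $x$ lies between two of its red neighbours, so it suffices to find a red Hamiltonian path of $\Gr[V(C)]$ whose endpoints are red neighbours of $x$ (a priori $x_1$ and $x_{1+j}$). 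I would argue by contradiction, assuming $G$ has no red $(n+1)$-cycle.

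Under this assumption I would first harvest forced edges. Inserting $x$ at any red-consecutive pair would produce a red $(n+1)$-cycle, so $x$ is red-adjacent to no two consecutive vertices of $C$; in particular the cycle-neighbours $x_2,x_n$ of $x_1$ and $x_j,x_{2+j}$ of $x_{1+j}$ are all blue-adjacent to $x$. Moreover, splicing the two arcs $x_1 \cdots x_{1+j}$ and $x_{1+j}\cdots x_n x_1$ of $C$ at either of the chords $x_2x_{2+j}$ or $x_jx_n$ would give a red Hamiltonian path from $x_1$ to $x_{1+j}$, hence a red $(n+1)$-cycle; so these two chords are blue, giving blue triangles $xx_2x_{2+j}$ and $xx_jx_n$. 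Merging these triangles then forces each of the four connectors $x_2x_j,\ x_2x_n,\ x_{2+j}x_j,\ x_{2+j}x_n$ to be red, since a blue one would close a blue $5$-cycle through $x$ (for instance, if $x_2x_n$ were blue, then $x_2\,x_{2+j}\,x\,x_j\,x_n\,x_2$ would be a blue $C_5$).

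It remains to assemble these red chords into a red $(n+1)$-cycle, and this is where the real work lies. Two features make it delicate. The extreme values $j=3$ and $j=n-3$ are degenerate, because then one of the four connectors is itself a cycle edge and the step above yields nothing new, so they need separate treatment. More seriously, the forbidden blue $5$-cycle need not pass through $x$ at all: it can live entirely on $V(C)$, built from blue chords, so the argument cannot be localised around $x_1$ and $x_{1+j}$. I therefore expect to iterate the principle ``a blue chord completes a blue $C_5$'' — now also on $5$-subsets of $V(C)$ — until enough chords of $C$ are known to be red that $\Gr[V(C)]$ admits the required Hamiltonian path between two red neighbours of $x$; a case analysis on the cyclic gaps between the red neighbours of $x$ (no two consecutive, and every gap in $[3,n-3]$ triggering a reroute) should pin the structure down, with a Bondy-type count (Lemma~\ref{Bondy}) available to finish once $\Gr[V(C)]$ is seen to be dense and non-bipartite. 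Extracting a concrete red $(n+1)$-cycle from the accumulated red edges while accounting for these non-local blue $5$-cycles is the step I expect to be the main obstacle.
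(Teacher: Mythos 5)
Your reduction of $j\in\{1,n-1\}$, the forced blue edges $xx_2$, $xx_n$, $xx_j$, $xx_{2+j}$, the blue chords $x_2x_{2+j}$ and $x_jx_n$, and the four red connectors are all correct, and they match (indeed slightly exceed) the opening moves of the paper's proof, which forces the single red $2$-chord $x_{i-1}x_{i+1}$ exactly as you force $x_2x_n$. But the assembly step you defer as ``the main obstacle'' is not a technical chore that more routing will overcome: from the edges you have harvested it \emph{provably fails}. The only guaranteed red neighbours of $x$ are $x_1$ and $x_{1+j}$, and every red chord you have forced avoids both of them, so each has red degree $2$ (its two cycle neighbours) in the harvested red graph. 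Since the interior vertices of the arcs $x_3\cdots x_{j-1}$ and $x_{3+j}\cdots x_{n-1}$ also have red degree $2$ and neither endpoint of the desired path lies on them, any red Hamiltonian $(x_1,x_{1+j})$-path would have to traverse each arc end to end; contracting the two arcs reduces the question to a six-vertex graph on $\{x_1,x_2,x_j,x_{1+j},x_{2+j},x_n\}$, where a short case check shows no such path exists. So the missing idea is a further \emph{forcing} step, not a cleverer extraction: you need a red chord incident with $x_1$ or $x_{1+j}$.

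That step is again local and again runs through $x$, so both of your anticipated complications dissolve: the paper never needs to exclude blue $5$-cycles lying inside $V(C)$, and a Bondy-type count (Lemma~\ref{Bondy}) is a non-starter, since it would require roughly $n^2/4$ red edges on $V(C)$ while you have only the cycle plus four chords. Concretely, if $x_1x_j$ and $x_1x_{2+j}$ were both blue, then $x_jx_1x_{2+j}x_2xx_j$ would be a blue $C_5$, using your blue chord $x_2x_{2+j}$ and the blue edges $xx_2$ and $xx_j$; hence one of $x_1x_j$, $x_1x_{2+j}$ is red. In the first case
\[
x_1x_jx_{j-1}\cdots x_2x_nx_{n-1}\cdots x_{1+j}xx_1
\]
and in the second case
\[
x_1x_{2+j}x_{3+j}\cdots x_nx_2x_3\cdots x_{1+j}xx_1
\]
is a red $C_{n+1}$, each routed through the red chord $x_2x_n$ you already established. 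This is precisely how the paper concludes (there phrased as: $x_{i-1}x_{i+1}$ red, then $x_ix_{i+j-1}$ or $x_ix_{i+j+1}$ red, then the two explicit cycles); with this one additional application of the no-blue-$C_5$ hypothesis your argument closes, and the separate treatment you feared for $j=3$ and $j=n-3$ is unnecessary, as the six vertices involved remain distinct for all $j\in[3,n-3]$.
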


\begin{proof}
This is clear if $j \in \{1,n-1\}$, so take $j \in [3,n-3]$. In order to obtain a contradiction, suppose $G$ does not contain a red $C_{n+1}$. Since $j \in [3,n-3]$, $x_{i-1}$, $x_i$, $x_{i+1}$, $x_{i+j-1}$, $x_{i+j}$, and $x_{i+j+1}$ are all distinct, and since $G$ contains no red $C_{n+1}$, $x$ is blue adjacent to $x_{i-1}$, $x_{i+1}$, $x_{i+j-1}$, and $x_{i+j+1}$. Were $x_{i-1}x_{i+j-1}$ or $x_{i+1}x_{i+j+1}$ red,
\[
x_{i-1}x_{i+j-1}x_{i+j-2} \cd x_ixx_{i+j}x_{i+j+1} \cd x_{i-1}
\]
or
\[
x_{i+1}x_{i+j+1}x_{i+j+2} \cd x_ixx_{i+j}x_{i+j-1} \cd x_{i+1},
\]
respectively, would be a red $C_{n+1}$, whence $x_{i-1}x_{i+j-1}$ and $x_{i+1}x_{i+j+1}$ are blue. Since $G$ contains no blue $C_5$, $x_{i-1}x_{i+1}$ is red. Were $x_ix_{i+j-1}$ and $x_ix_{i+j+1}$ blue, $x_{i+j-1}x_ix_{i+j+1}x_{i+1}xx_{i+j-1}$ would be a blue $C_5$, whence $x_ix_{i+j-1}$ or $x_ix_{i+j+1}$ is red. Thus
\[
x_ix_{i+j-1}x_{i+j-2} \cd x_{i+1}x_{i-1}x_{i-2} \cd x_{i+j}xx_i
\]
or
\[
x_ix_{i+j+1}x_{i+j+2} \cd x_{i-1}x_{i+1}x_{i+2} \cd x_{i+j}xx_i,
\]
respectively, is a red $C_{n+1}$, contrary to the hypothesis.
\end{proof}

We can now prove \Pro~\ref{new}.

\begin{proof}[Proof of \Pro~\ref{new}]
We may assume $|G| \geq 8$. Since $R(C_4,C_5)=7$, $G$ contains a red $C_4$. We shall prove the following:

\begin{ClaimA} \label{max}
If $C$ is a red cycle of maximal length, then $|G-V(C)| \leq 2$.
\end{ClaimA}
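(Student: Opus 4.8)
The plan is to argue by contradiction. Write $n=|C|$ and $W=G-V(C)$, and for $x\in W$ let $B_x$ denote the set of blue neighbours of $x$ on $C$. Suppose $|W|\ge3$; I shall produce either a blue $C_5$, contradicting the hypothesis of \Pro~\ref{new}, or a red cycle longer than $C$, contradicting maximality. The engine is Lemma~\ref{old}: since $G$ has no red $C_{n+1}$, for every $x\in W$ the red neighbours of $x$ on $C$ are pairwise at cyclic distance exactly $2$ (the distances $2$ and $n-2$ being the only ones not forbidden by Lemma~\ref{old}). Hence each $x\in W$ has at most two red neighbours on $C$ once $n\ge7$, and so at least $n-2$ blue neighbours, i.e. $|B_x|\ge n-2$. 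I would treat the finitely many cases $4\le n\le6$ separately and by hand, since there the overlap bounds below are too weak.

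First I would show that $W$ is a red clique. Suppose $xy$ is a blue edge with $x,y\in W$, and pick a third vertex $z\in W$. As $|B_x|,|B_y|,|B_z|\ge n-2$, any two of these sets meet in at least $n-4\ge2$ vertices of the $n$-cycle $C$; choosing $x_a\in B_y\cap B_z$ and $x_b\in B_z\cap B_x$ with $x_a\ne x_b$ gives the blue $5$-cycle $yx_azx_bxy$, a contradiction. Thus no edge inside $W$ is blue.

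Next I would pin down the chords of $C$. For any $x,y\in W$ the set $B_x\cap B_y$ has at least $n-4\ge3$ elements; if two of them, $x_a,x_b$, were joined by a blue chord, then taking a third $x_c\in B_x\cap B_y$ would give the blue $5$-cycle $xx_ax_byx_cx$. Hence every chord of $C$ with both ends in $B_x\cap B_y$ is red, so at most four vertices of $C$ lie outside a red clique; in particular the red graph $G[V(C)]$ is a clique on at least $n-4$ vertices together with at most four further vertices, each still joined in red to its two cycle-neighbours. Now I would finish in three cases. (A) If some $x\in W$ has two red neighbours $x_a,x_{a+2}$, replace the subpath $x_ax_{a+1}x_{a+2}$ of $C$ by $x_axx_{a+2}$ and reinsert $x_{a+1}$ between two consecutive cycle-vertices to which it is red-adjacent (such an edge exists because all but at most four vertices form a red clique), producing a red $C_{n+1}$. (B) If two vertices $u,v\in W$ have single red neighbours $x_a\ne x_b$ on $C$, take a red Hamilton path of $G[V(C)]$ from $x_a$ to $x_b$ and close it through the red path $x_au\cdots vx_b$ inside the red clique $W$, producing a red cycle on all of $G$. (C) Otherwise at most one vertex of $C$ is red-adjacent to $W$, so some two vertices of $W$ are blue-adjacent to all of $C$; then every chord of $C$ is red and, $W$ being a red clique, every blue edge joins $W$ to $V(C)$, whence $\Gb$ is bipartite, contradicting the assumption that $\Gb$ contains an odd cycle.

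I expect the main obstacle to be case (B): guaranteeing a red Hamilton path of $G[V(C)]$ between two prescribed vertices. This should follow for large $n$ from the near-completeness of the red graph on $V(C)$ (a clique missing at most four vertices, each of red degree at least $2$), but it requires either a careful Ore/Chv\'atal--Erd\H{o}s-type argument or an explicit threading of the at-most-four exceptional vertices. The boundary cases $n\le6$, and the residual subcases of (C) in which exactly one cycle-vertex meets $W$ in red, likewise need separate hands-on treatment, where the odd-cycle hypothesis is again what closes the argument.
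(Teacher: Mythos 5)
Your overall strategy matches the paper's: Lemma~\ref{old} forces the red neighbours of an outside vertex on $C$ to lie at cyclic distance $2$, one then hunts for a blue $C_5$ or a red cycle longer than $C$, and the final contradiction is that $\Gb$ would be bipartite. Your observations that $W$ is a red clique and that chords inside $B_x\cap B_y$ are red are correct and in the paper's spirit. But the gaps are genuine and are not confined to bookkeeping. First, the cases $4\le n\le 6$ that you defer are not peripheral: in \Pro~\ref{new} the only red cycle guaranteed a priori comes from $R(C_4,C_5)=7$, so a maximal red cycle may well have length $4$, $5$ or $6$, and the paper's proof of the claim spends most of its case analysis precisely there (note also that at $n=6$ a vertex of $W$ can have \emph{three} red neighbours $x_1,x_3,x_5$, pairwise at distance $2$, so even your bound $|B_x|\ge n-2$ fails and the $n=6$ argument must look different). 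Second, within the range $n\ge 7$ you claim to settle, case (A) fails as written: reinserting $x_{a+1}$ presupposes $x_{a+1}\in B_x\cap B_y$ for some other $y\in W$, but both remaining vertices of $W$ may be red-adjacent to $x_{a+1}$, and your red clique then says nothing about $x_{a+1}$'s chords; moreover, even when $x_{a+1}$ does lie in the clique, counting consecutive pairs of the modified cycle with both ends in $B_x\cap B_y$ (which can miss four vertices of $C$, two of them being the new path's endpoints $x_a,x_{a+2}$) only guarantees a usable pair for roughly $n\ge 9$, leaving $n=7,8$ open. Third, the opening assertion of case (C) --- that two vertices of $W$ are blue-adjacent to all of $C$ --- is simply false in the subcase where every vertex of $W$ is red-adjacent to the unique cycle vertex meeting $W$ in red; that residual subcase, which you also defer, is exactly where the paper's key idea lives: it shows all red edges between $W$ and $C$ meet a single vertex $x_i$ and then exhibits the blue bipartition with parts $W\cup\{x_i\}$ and $V(C)\setminus\{x_i\}$, contradicting the odd blue cycle.

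Ironically, the step you single out as the main obstacle, case (B), is the easy one given your own structure: there $B_u\cap B_v=V(C)\setminus\{x_a,x_b\}$ is a red clique, and $x_a$, $x_b$ each retain their two red cycle edges into it, so a red Hamilton path of $G[V(C)]$ from $x_a$ to $x_b$ can be written down explicitly (enter the clique at a red neighbour of $x_a$, exit at a red neighbour of $x_b$); no Ore- or Chv\'atal--Erd\H{o}s-type machinery is needed. For comparison, the paper organises the proof around the dichotomy of whether some vertex of $W$ has at least two red neighbours on $C$: in the first branch it disposes of $n\ge 7$ or $n=5$, then $n=6$, then $n=4$ by short explicit arguments that bring in a \emph{second} vertex of $W$, and in the second branch it proves $W$ is a red clique (as you do) and runs the bipartition argument above. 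So your outline is recoverable, but as it stands the cases you defer or misjudge constitute the bulk of the actual proof.
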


Now, if $|G-V(C)|=0$, then, by Lemma~\ref{new1}, $G$ contains a red $C_{|G|-1}$ or a red $C_{|G|-2}$. In the former case, we also have a red $C_{|G|-2}$, by Lemma~\ref{new2}. If $|G-V(C)|=1$, then, by Lemma~\ref{new2}, we have a red $C_{|G|-2}$, which is also the case if $|G-V(C)|=2$. Hence, by Claim~\ref{max}, we always have a red $C_{|G|-2}$. Therefore, by Lemma~\ref{new2}, \Pro~\ref{new} follows.

\medskip

It remains to prove Claim~\ref{max}. Thus, let $C=x_1x_2 \cd x_nx_1$ be a red cycle of maximal length, and let $v_1,\p,v_k$ be the vertices of $G-V(C)$; assume $k \geq 3$. We shall prove that $G$ contains a red cycle longer than $C$, whence, in fact, $k \leq 2$.

Assume first that some $v_i$, say $v_1$, is red adjacent to at least two vertices of $C$. By Lemma~\ref{old}, we may assume that $v_1x_2$ and $v_1x_4$ are red. Then $v_1x_1$, $v_1x_3$, $v_1x_5$, $x_1x_3$, and $x_3x_5$ are blue. We consider three cases:

$n \geq 7$ or $n=5$: Were $v_2$ red adjacent either to $x_1$ and $x_3$ or to $x_3$ and $x_5$, $G$ would contain a red $C_{n+2}$. It follows from Lemma~\ref{old} that were $v_2$ red adjacent to $x_1$ and $x_5$, $G$ would contain a red $C_{n+1}$. Hence, $v_2$ has at most one red edge to $\{x_1,x_3,x_5\}$, whence we obtain a blue $C_5$, a contradiction.

$n=6$: Since $G$ contains no blue $C_5$, $v_2$ cannot be blue adjacent to both $x_1$ and $x_5$; say that $v_2x_5$ is red. Then $v_2x_6$ is blue. Were $v_2x_3$ red, we would obtain a red $C_{n+2}$, whence $v_2x_3$ is blue. Since $G$ contains no blue $C_5$, $v_2x_1$ is red. Now, if $v_1x_6$ is blue, then $v_1x_6v_2x_3x_1v_1$ is a blue $C_5$, and if $v_1x_6$ is red, then $v_1x_6x_1v_2x_5x_4x_3x_2v_1$ is a red $C_{n+2}$, a contradiction.

$n=4$: It is easily seen that $v_2$, as well as $v_3$, has at most one red edge to $\{v_1,x_1,x_3\}$. Were $v_2$ and $v_3$ not red adjacent to the same of these three vertices, $G$ would contain a blue $C_5$, whence this is not the case. Now, if $v_2x_2$ and $v_3x_2$ are blue, we obtain a blue $C_5$, and if either $v_2x_2$ or $v_3x_2$ is red, we have a red $C_{n+1}$, a contradiction.

Assume now that each $v_i$ is red adjacent to at most one vertex of $C$. Then every three vertices of $G-V(C)$ belong to a common blue $C_6$ whose vertices alternate between $G-V(C)$ and $C$. Therefore, a blue edge in $G-V(C)$ would yield a blue $C_5$, whence all $v_iv_j$ are red. Hence, since $G$ is not blue bipartite, $G[V(C)]$ contains a blue edge $x_ix_j$. If both $x_i$ and $x_j$ have blue neighbours in $G-V(C)$, then there are $v,v' \in G-V(C)$ and a blue path $vx_ix_jv'$ which extends to a blue $C_5$. Hence, $x_i$ (say) has only red neighbours in $G-V(C)$. This accounts for all red edges between $G-V(C)$ and $C$. Therefore, every blue edge in $G[V(C)]$ is incident with $x_i$. Hence, $G$ is blue bipartite, with parts $(G-V(C)) \cup \{x_i\}$ and $G[V(C)-\{x_i\}]$, a contradiction. This completes the proof.
\end{proof}

\section{Proofs of the main results} \label{proofs}

\subsection{Generalised Ramsey numbers} \label{numbers}

In this subsection, we prove \Th~\ref{MT}. The proof consists of a number of propositions, the first of which (\Pro~\ref{C1C2}) takes care of the case $(\Ga_1,\Ga_2) \in \Cc_1 \cup \Cc_2$; the others (\Pros~\ref{C3C3orC4C4}--\ref{g1>=6g2=6}) deal with the case $\min(\Ga_1 \cup \Ga_2) \leq 6$.

In order to state \Pro~\ref{C1C2}, we have to define $\Cc_1$ and $\Cc_2$. Let $\Cc_1$ be the set of all pairs $(\Ga_1,\Ga_2) \in \Cc$ such that at least one of the following conditions holds:
\begin{itemize}
  \item[(i)]   $0 \equiv \ga^2 \geq \max(6,\ga_e^1)$ or $\ga^2 \geq 3\ga_e^1/2$;
  \item[(ii)]  $\ga^1 \equiv 1$, $\ga_e^1 \geq 2\ga^2$, and either $0 \equiv \ga^2 \geq 2\ga^1/3$ or $\ga^2 \geq \max(4,\ga^1)$;
  \item[(iii)] $\ga^2>\ga_e^1$ and $\ga_e^2=\ga^2+1$.
\end{itemize}
Also, let $\Cc_2$ be as $\Cc_1$, but with the roles of $1$ and $2$ interchanged in the superscripts.

\begin{Prop} \label{C1C2}
Let $(\Ga_1,\Ga_2) \in \Cc_1 \cup \Cc_2$. Then
\[
\RGa=\mGa=\begin{cases}\RrGa & \h{if $(\Ga_1,\Ga_2) \in \Cc_1$,} \\ \RbGa & \h{if $(\Ga_1,\Ga_2) \in \Cc_2$.}\end{cases}
\]
\end{Prop}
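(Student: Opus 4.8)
The plan is to establish the two bounds $\RGa \geq \mGa$ and $\RGa \leq \mGa$ separately, using the characterisation $\mm=\max(\RrGa,\RbGa)$ together with the fact, recorded in \eqref{R-comp}, that $\RGa \geq \max(\Rr,\Rb)$. The lower bound $\RGa \geq \mm$ is therefore immediate and requires no work, so the real content is entirely in the upper bound. By the symmetry $\RGa=R(\Ga_2,\Ga_1)$ and the fact that $\Cc_2$ is $\Cc_1$ with the indices swapped, it suffices to treat the case $(\Ga_1,\Ga_2)\in\Cc_1$, where the claim becomes $\RGa \leq \RrGa$; equivalently, by \Pro~\ref{redblue} applied to $\Ga_2$ (with the roles of red and blue interchanged), I must show that every red-blue graph $G$ on $\RrGa$ vertices which is \emph{not} red bipartite still contains a red subgraph from $\Ga_1$ or a blue subgraph from $\Ga_2$. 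Since $\Rr=\min(\ga^2+\ga_e^1/2-1,2\ga^2-1)$ (outside the small exceptional case), the target size is explicit, and my job is to convert the presence of an odd red cycle into the desired monochromatic cycle.

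The main step is to exploit the three conditions (i)--(iii) defining $\Cc_1$ via the preparatory lemmas. I would take $G$ on $n:=\RrGa$ vertices, assume it is $(\Ga_1,\Ga_2)$-avoiding, and derive a contradiction by first locating a long monochromatic cycle and then descending to the exact length needed. In case (i), where $\ga^2$ is even and large relative to $\ga_e^1$, Lemma~\ref{Lemma 3.1} produces a monochromatic cycle of length at least $\ga^2$ or a blue $C_{\ga^2}$; the inequalities $\ga^2\geq\max(6,\ga_e^1)$ or $\ga^2\geq 3\ga_e^1/2$ are precisely what is needed to push the hypotheses of that lemma through with $k=\ga^2$ and an appropriate $n$. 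I would then use Lemma~\ref{Lemma 2.1} to step the parity/length down by $2$ to reach either a red cycle of length $\ga_e^1$ (forbidden, as $C_{\ga_e^1}\in\Ga_1$ up to taking the shortest even cycle) or a blue $C_{\ga^2}$ (forbidden), and Lemma~\ref{Lemma 3.3} to convert a blue $C_{\ga^2}$-free configuration into a red cycle when a blue cycle of the right length is present. Case (ii), governed by the odd-girth condition $\ga^1\equiv 1$ together with $\ga_e^1\geq 2\ga^2$, is handled symmetrically but now the forbidden red cycle is the odd one $C_{\ga^1}$, and the threshold $2n-1$ versions of $\Rr$ come into play; here Lemma~\ref{Lemma 2.1}(a) is the tool that passes from an odd monochromatic cycle to an even one. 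Case (iii), where $\ga_e^2=\ga^2+1$ so that $\Ga_2$ contains consecutive odd and even cycles, is the most delicate: the tight relation $\ga^2>\ga_e^1$ forces the blue cycle produced to lie in exactly the window where Lemma~\ref{Lemma 3.3} applies, and I would use it to rule out the last surviving configuration.

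The hard part will be bookkeeping the parities and the exact cycle lengths so that the lemmas, which are stated for specific congruence classes and ranges (e.g. $k\equiv 0$, $n\geq k\geq 4$, $n\geq 5$ in Lemma~\ref{Lemma 3.1}), apply on the nose to the values $\ga^1,\ga_e^1,\ga^2,\ga_e^2$ arising from the three defining conditions. In particular I expect the chief obstacle to be verifying, case by case, that the number of vertices $\RrGa$ computed from \Pro~\ref{redblue} matches the hypothesis count demanded by Lemma~\ref{Lemma 3.1} (namely $N+K/2-1$ with the correct $N$ and $K$), and that after descending by steps of $2$ via Lemma~\ref{Lemma 2.1} one lands exactly on a forbidden length rather than overshooting. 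Boundary phenomena — the exceptional pair $(\ga^1,\ga_e^2)=(3,4)$ excluded in \Pro~\ref{redblue}, and the switch between the $\ga^2+\ga_e^1/2-1$ and $2\ga^2-1$ branches of $\Rr$ at $2\ga^2=\ga_e^1$ — will need separate attention, since the choice of which branch realises $\Rr$ determines which lemma is invoked. Once all three conditions are dispatched, the upper bound $\RGa\leq\RrGa$ follows, and combined with the trivial lower bound this yields $\RGa=\mGa=\RrGa$ for $(\Ga_1,\Ga_2)\in\Cc_1$, completing the proof after appealing to symmetry for $\Cc_2$.
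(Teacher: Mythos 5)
Your lower bound via \eqref{R-comp} and the symmetry reduction to $\Cc_1$ are fine, and for condition (iii) (the paper's $\Cr^4 \cup \Cb^4$) your plan is essentially the paper's argument: Lemma~\ref{Lemma 3.1} yields a monochromatic cycle of length at least $\ga^1$ or a blue $C_{\ga_e^2}$, Lemma~\ref{Lemma 2.1} descends to length $\ga^1$ or $\ga^1+1$, and Lemma~\ref{Lemma 3.3} converts a blue landing into a red cycle or a blue $C_{\ga_e^2}$ --- the point being that the hypothesis $\ga_e^1=\ga^1+1$ puts \emph{both} landing lengths in $\Ga_1$, which is what compensates for the fact that Lemma~\ref{Lemma 2.1} does \emph{not} preserve colour. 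That colour loss is precisely where your treatment of conditions (i) and (ii) breaks down. In case (i) you propose to ``step down by $2$ to reach a red cycle of length $\ga_e^1$'', but the descent only gives monochromatic cycles of unknown colour, and a blue $C_{\ga_e^1}$ is not forbidden; one can partially patch this when $\ga^2$ is even (land on a monochromatic $C_{\ga^2}$ and apply Lemma~\ref{Lemma 3.3} in both directions), but in the branch $\ga^2 \geq 3\ga_e^1/2$ with $\ga^2$ odd there is no even target for Lemma~\ref{Lemma 3.1} or \ref{Lemma 3.3} on the blue side, and the chain stalls. Case (ii) is worse: the bound to be proved is $2\ga^2-1$, corresponding to $\De_2$ or $\De_3$ with an odd parameter, and all three quoted lemmas require the short target length to be even ($k \equiv 0$, $k \geq 4$); making your plan work there would amount to re-proving the upper bounds of \Th~\ref{HS}, which is exactly the ``quite technical and detailed'' work of Rosta, Faudree--Schelp, and K\'arolyi--Rosta, not bookkeeping. (A symptom: your parameter assignment ``Lemma~\ref{Lemma 3.1} with $k=\ga^2$'' is already inconsistent with the vertex count $\RrGa=\ga^2+\ga_e^1/2-1$, which forces $n=\ga^2$, $k=\ga_e^1$ with colours swapped.)

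The paper's actual proof sidesteps all of this for conditions (i) and (ii): it splits $\Cc_1$ into $\Cr^1,\ldots,\Cr^4$, and for $\Cr^1,\Cr^2,\Cr^3$ the upper bound is obtained \emph{for free} from \eqref{ub} together with the known values in \Th~\ref{HS}, while the matching lower bound comes from observing that Colourings~4, 5, and 6 avoid not just the representative pair $(C_n,C_k)$ but every red member of a set $\Phi \seq \C_{\geq n}$, every blue member of $\Psi$, and all odd cycles $\Omega \seq \C_o$ on the bipartite side; the inequalities defining $\Cc_1$ are exactly the conditions under which these extremal colourings kill the whole sets, so $\RGa$ collapses onto a single-pair Ramsey number, which is then identified with $\RrGa$ via \Pro~\ref{redblue} and \eqref{R-comp}. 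New combinatorial work is needed only in $\Cr^4 \cup \Cb^4$, where (since $\ga^1$ is odd and $\ga_e^2 < \ga^1 < 3\ga_e^2/2$ can occur) every single-pair number $R(C_m,C_{\ga_e^2})$ with $C_m \in \Ga_1$ strictly exceeds the set number, so \Th~\ref{HS} cannot be cited --- and that is the one place the lemma chain lives. So: keep your argument for (iii), but replace your direct attack on (i) and (ii) by the citation-plus-colouring argument; as proposed, those two cases constitute a genuine gap.
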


\begin{proof}
For each $j \in [4]$, let $\Cr^j$ consist of all pairs $(\Ga_1,\Ga_2) \in \Cc$ such that
\begin{itemize}
  \item[] $\Cr^1$: $0 \equiv \ga^2 \geq \max(6,\ga_e^1)$ or $\ga^2 \geq 3\ga_e^1/2$;
  \item[] $\Cr^2$: $0 \equiv \ga^2<\ga^1$, $1 \equiv \ga^1 \leq 3\ga^2/2$, and $\ga_e^1 \geq 2\ga^2$;
  \item[] $\Cr^3$: $1 \equiv \ga^1 \leq \ga^2 \leq \ga_e^1/2$ and $\ga^2 \geq 4$;
  \item[] $\Cr^4$: $\ga^2>\ga_e^1$ and $\ga_e^2=\ga^2+1$.
\end{itemize}
Furthermore, let $\Cb^j$ be as $\Cr^j$, but with the roles of $1$ and $2$ interchanged in the superscripts. It is easily seen that
\[
\Cc_1=\bigcup_{j=1}^4{\Cr^j} \quad \h{and} \quad \Cc_2=\bigcup_{j=1}^4{\Cb^j}.
\]

\medskip

Choose arbitrary sets $\Phi \seq \C_{\geq n}$, $\Psi \seq \C_{\geq k}$, $\Phi_0 \seq \C_{\geq 2k}$, $\Psi_0 \seq \C_{\geq 2n}$, and $\Omega \seq \C_o$, where $n \geq k \geq 3$. Observe that Colouring~4 contains no red member of $\Phi$ and no blue member of $\Psi \cup \Omega$. By \eqref{ub} and the upper bounds in \Th~\ref{HS}, we therefore conclude that
\begin{equation} \label{eq:1}
R(\{C_n\} \cup \Phi,\{C_k\} \cup \Psi \cup \Omega)=n+k/2-1 \quad \h{if $(n,k) \in \De_1$.}
\end{equation}
By considering Colourings~5 and 6 in the same way, we also obtain
\begin{align}
R(\{C_n\} \cup \Phi_0 \cup \Omega,\{C_k\} \cup \Psi)=2k-1 &\quad \h{if $(n,k) \in \De_2$} \label{eq:2} \\
\intertext{and}
R(\{C_n\} \cup \Phi,\{C_k\} \cup \Psi_0 \cup \Omega)=2n-1 &\quad \h{if $(n,k) \in \De_3$.} \label{eq:3}
\end{align}

Now, taking $n=\ga^1$ and $k=\ga_e^2$ in \eqref{eq:1}, and $n=\ga^1$ and $k=\ga^2$ in \eqref{eq:2} and \eqref{eq:3}, yields
\[
\RGa=\begin{cases}\ga^1+\ga_e^2/2-1 & \h{if $(\Ga_1,\Ga_2) \in \Cb^1$,} \\ 2\ga^2-1 & \h{if $(\Ga_1,\Ga_2) \in \Cr^2$,} \\ 2\ga^1-1 & \h{if $(\Ga_1,\Ga_2) \in \Cb^3$.}\end{cases}
\]
Hence, by symmetry,
\[
\RGa=\begin{cases}\ga^2+\ga_e^1/2-1 & \h{if $(\Ga_1,\Ga_2) \in \Cr^1$,} \\ 2\ga^1-1 & \h{if $(\Ga_1,\Ga_2) \in \Cb^2$,} \\ 2\ga^2-1 & \h{if $(\Ga_1,\Ga_2) \in \Cr^3$.}\end{cases}
\]

Thus, if $(\Ga_1,\Ga_2) \in \Cr^1 \cup \Cr^2 \cup \Cr^3$, then $\RGa=\RrGa$, and if $(\Ga_1,\Ga_2) \in \Cb^1 \cup \Cb^2 \cup \Cb^3$, then $\RGa=\RbGa$; now apply \eqref{R-comp}.

\medskip

Consider now $(\Ga_1,\Ga_2) \in \Cr^4 \cup \Cb^4$; \WLOGtwo, assume that $(\Ga_1,\Ga_2) \in \Cb^4$. In order to simplify notation, let $n=\ga^1$ and $k=\ga_e^2$. By Lemma~\ref{Lemma 3.1},
\[
R(\C_{\geq n},\C_{\geq n} \cup \{C_k\}) \leq n+k/2-1.
\]
Hence, by Lemma~\ref{Lemma 2.1},
\[
R(\{C_n,C_{n+1}\},\{C_n,C_{n+1}\} \cup \{C_k\}) \leq n+k/2-1.
\]
Hence, by Lemma~\ref{Lemma 3.3},
\[
\RGa \leq R(\{C_n,C_{n+1}\},\{C_k\}) \leq n+k/2-1.
\]
Since, by \Pro~\ref{redblue}, $\RbGa=n+k/2-1$, and $\RbGa \leq \mm \leq \RGa$, this completes the proof.
\end{proof}

\begin{Prop} \label{C3C3orC4C4}
Let $(\Ga_1,\Ga_2) \in \Cc$ with $C_3$ or $C_4 \in \Ga_1 \cap \Ga_2$. Then
\[
\mGa=5
\]
and
\[
\RGa=\begin{cases}5 & \h{if $C_3$ and $C_5 \in \Ga_1 \cup \Ga_2$,} \\ 6 & \h{otherwise.}\end{cases}
\]
\end{Prop}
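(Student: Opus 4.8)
The plan is to read off $\mGa$ from \Co~\ref{maxredblue}, to settle the easy bounds on $\RGa$ with known Ramsey numbers and the prescribed colourings, and to concentrate the real work in a single analysis of red-blue graphs on five vertices. For $\mGa$, the hypothesis splits into two situations: either $C_3 \in \Ga_1 \cap \Ga_2$, forcing $\ga^1=\ga^2=3$, or $C_4 \in \Ga_1 \cap \Ga_2$, forcing $\ga_e^1=\ga_e^2=4$ and $\ga^1,\ga^2 \le 4$. Substituting into the formula of \Co~\ref{maxredblue}: in the first situation the second entries $2\ga^1-1$ and $2\ga^2-1$ both equal $5$, so each inner minimum is $\le 5$; in the second, the first entries $\ga^1+\ga_e^2/2-1$ and $\ga^2+\ga_e^1/2-1$ both equal $\ga^i+1 \le 5$. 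Either way both minima are at most $5$, and as the outer maximum already contains $5$, this gives $\mGa=5$.

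For the bounds on $\RGa$, the lower bound $\RGa \ge \mGa = 5$ is immediate from \eqref{R-comp}. For an upper bound, apply \eqref{ub} with $G_1=G_2=C_3$ or $G_1=G_2=C_4$ (whichever lies in $\Ga_1 \cap \Ga_2$); since $R(C_3,C_3)=R(C_4,C_4)=6$ by \Th~\ref{HS}, this gives $\RGa \le 6$. To decide between $5$ and $6$ in the ``otherwise'' branch, note that Colouring~2 has a monochromatic triangle in each colour and no longer monochromatic cycle, so it is $(\Ga_1,\Ga_2)$-avoiding exactly when $C_3 \notin \Ga_1 \cup \Ga_2$; similarly Colouring~3, whose colour classes are $5$-cycles, avoids exactly when $C_5 \notin \Ga_1 \cup \Ga_2$. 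As the ``otherwise'' branch is precisely the failure of $C_3,C_5 \in \Ga_1 \cup \Ga_2$, at least one of these two colourings is avoiding on five vertices, giving $\RGa \ge 6$ and hence $\RGa=6$.

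The substance of the proof is the remaining upper bound $\RGa \le 5$ when $C_3,C_5 \in \Ga_1 \cup \Ga_2$. I would suppose, for contradiction, that some $(\Ga_1,\Ga_2)$-avoiding red-blue graph $G$ on five vertices exists, and split on the hypothesis. If $C_3 \in \Ga_1 \cap \Ga_2$, then $G$ contains no monochromatic triangle, so $G$ is Colouring~3 (the only such $2$-colouring of $K_5$); this produces a red and a blue $C_5$, and since $C_5 \in \Ga_1 \cup \Ga_2$ one of them is forbidden, a contradiction. If instead $C_4 \in \Ga_1 \cap \Ga_2$ while $C_3 \notin \Ga_1 \cap \Ga_2$, then, as $R(\Ga_1,\Ga_2)=R(\Ga_2,\Ga_1)$, I may assume $C_3 \in \Ga_1$; thus $\Gr$ has neither a $C_3$ nor a $C_4$, and $\Gb$ has no $C_4$. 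A $C_3$- and $C_4$-free graph on five vertices is either a $5$-cycle or a forest. In the first case $\Gb=\ol{\Gr}=C_5$, and $C_5 \in \Ga_1 \cup \Ga_2$ again furnishes a forbidden monochromatic $C_5$. In the second case $\Gr$ sits inside a spanning tree $T$, so $\Gb \qes \ol T$; checking the three trees on five vertices shows that $\ol T$ always contains a $C_4$, contradicting the absence of a blue $C_4$.

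The main obstacle is this five-vertex analysis, and within it the two structural facts it rests on: that the unique triangle-free $2$-colouring of $K_5$ is Colouring~3, and that a $C_3$- and $C_4$-free graph on five vertices is a $5$-cycle or a forest whose complement necessarily contains a $4$-cycle. Both are elementary finite checks, but they carry the whole argument; the evaluation of $\mGa$ and the bound $\RGa \le 6$ are routine once \Co~\ref{maxredblue}, \eqref{ub}, and the values $R(C_3,C_3)=R(C_4,C_4)=6$ are in hand.
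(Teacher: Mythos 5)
Your proposal is correct, and its skeleton matches the paper's: $\mGa=5$ via \Co~\ref{maxredblue}, the lower bound $5$ from \eqref{R-comp}, the upper bound $6$ from \eqref{ub} with $R(C_3,C_3)=R(C_4,C_4)=6$, Colourings~2 and~3 for the lower bound $6$ in the ``otherwise'' branch, and the uniqueness of Colouring~3 as the monochromatic-triangle-free colouring of $K_5$ for the subcase $C_3 \in \Ga_1 \cap \Ga_2$. Where you genuinely diverge is the remaining subcase $C_4 \in \Ga_1 \cap \Ga_2$ with (by symmetry) $C_3 \in \Ga_1$. The paper argues inside the colouring: if every vertex had exactly two edges of each colour one would get a red and a blue $C_5$ (forbidden since $C_5 \in \Ga_1 \cup \Ga_2$), so some vertex has three edges of one colour, which, absent a red $C_3$, forces a blue triangle; a short case analysis of the edges from the two vertices outside that triangle then produces a red $C_3$, a red $C_4$, or a blue $C_4$. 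You instead argue structurally on one colour class: $\Gr$ is $\{C_3,C_4\}$-free on five vertices, hence either a spanning $5$-cycle --- whence $\Gb=\ol{C_5}=C_5$ and $C_5 \in \Ga_1 \cup \Ga_2$ gives the contradiction --- or a forest, in which case $\Gb$ contains the complement of one of the three trees on five vertices ($P_5$, $K_{1,4}$, and the spider with degree sequence $(3,2,1,1,1)$), each of whose complements contains a $C_4$, contradicting $C_4 \in \Ga_2$. Both routes are elementary finite checks of comparable length; yours is arguably cleaner and exploits the asymmetry of the hypotheses (girth constraints are needed only on the red side, $C_4$-freeness only on the blue side), at the price of the two small classification facts, which I have verified, while the paper's edge-by-edge analysis needs no classification but is fiddlier. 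Your reading of Colouring~2 (the bull in each colour, whose only monochromatic cycles are the two triangles) and the resulting criterion that it avoids exactly when $C_3 \notin \Ga_1 \cup \Ga_2$ are also correct.
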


\begin{proof}
By \Co~\ref{maxredblue}, $\mGa=5$. Hence, by \eqref{R-comp}, $\RGa \geq 5$. By \eqref{ub} and since $R(C_3,C_3)=R(C_4,C_4)=6$, $\RGa \leq 6$.

If $C_3 \notin \Ga_1 \cup \Ga_2$, then Colouring~2 shows that $\RGa \geq 6$, and if $C_5 \notin \Ga_1 \cup \Ga_2$, then Colouring~3 shows that $\RGa \geq 6$. Thus, from now on, assume that both $C_3$ and $C_5$ belong to $\Ga_1 \cup \Ga_2$; \WLOGtwo, assume that $C_3 \in \Ga_1$. We have to show that $\RGa \leq 5$.

$C_3 \in \Ga_1 \cap \Ga_2$: It is well known and easy to verify that Colouring~3 is the only red-blue graph on $5$ vertices without monochromatic $3$-cycles, and it contains a red $C_5$ and a blue $C_5$.

$C_4 \in \Ga_1 \cap \Ga_2$: Let $G$ be an arbitrary red-blue graph on $5$ vertices, and assume $G$ is $(\Ga_1,\Ga_2)$-avoiding. Were each vertex of $G$ incident with exactly two edges of each colour, we would obtain a red $C_5$ and a blue $C_5$, whence some vertex of $G$ is incident with at least three edges of the same colour. Since $G$ contains no red $C_3$, it follows that $G$ contains a blue $C_3$, say $C=x_1x_2x_3x_1$; let $v_1$ and $v_2$ be the vertices of $G-V(C)$. Since $G$ contains no monochromatic $C_4$, we may assume that $v_1$ is red adjacent to $x_1$ and $x_3$, but blue adjacent to $x_2$, while $v_2$ is red adjacent to $x_2$ and $x_3$, but blue adjacent to $x_1$. Now, if $v_1v_2$ is red, then $v_1v_2x_3v_1$ is a red $C_3$, and if $v_1v_2$ is blue, then $v_1v_2x_1x_2v_1$ is a blue $C_4$, contrary to the hypothesis.
\end{proof}

\begin{Prop} \label{C4C3orC3C4}
Let $(\Ga_1,\Ga_2) \in \Cc$ with $C_4 \in \Ga_1 \not\ni C_3$ and $C_3 \in \Ga_2 \not\ni C_4$. Then
\[
\RGa=\mGa=\begin{cases}6 & \h{if $C_6 \in \Ga_2$,} \\ 7 & \h{otherwise.}\end{cases}
\]
\end{Prop}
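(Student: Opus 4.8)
The plan is to read off $\mGa$ from \Co~\ref{maxredblue} and then prove $\RGa=\mGa$ by matching bounds. Under the hypotheses, $\ga^1=\ga_e^1=4$ (as $C_4\in\Ga_1\not\ni C_3$) and $\ga^2=3$ (as $C_3\in\Ga_2$), while $\ga_e^2=6$ if $C_6\in\Ga_2$ and $\ga_e^2\geq 8$ otherwise (as $C_4\notin\Ga_2$). Hence \Co~\ref{maxredblue} gives $\mGa=\max\!\big(5,\min(4,5),\min(3+\ga_e^2/2,7)\big)$, which is $6$ when $\ga_e^2=6$ and $7$ when $\ga_e^2\geq 8$, as claimed. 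The lower bound $\RGa\geq\mGa$ is then immediate from \eqref{R-comp}.

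For the upper bound I would split on the two cases. If $C_6\notin\Ga_2$, nothing beyond citation is needed: by \eqref{ub} and \Th~\ref{HS}, $\RGa\leq R(C_4,C_3)=2\cdot4-1=7=\mGa$. The substantive case is $C_6\in\Ga_2$, where I must show $\RGa\leq 6$; by \eqref{ub} it suffices to prove $R(\{C_4\},\{C_3,C_6\})\leq 6$, i.e.\ that every red-blue graph $G$ on $6$ vertices contains a red $C_4$, a blue $C_3$, or a blue $C_6$. Suppose not. Since $G$ has no blue $C_3$, Lemma~\ref{Lemma X} forces $G$ to be either blue bipartite or red hamiltonian.

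The blue bipartite case is dispatched by \Pro~\ref{redblue}: with $\ga^1=4$ and $\ga_e^2=6$ it yields $\RbGa=4+6/2-1=6$, so no blue bipartite avoiding graph on $6$ vertices exists. The main case is $G$ red hamiltonian, say with red $6$-cycle $C=x_1x_2\cd x_6x_1$, and here a short chord analysis finishes things. No $3$-chord $x_ix_{i+3}$ can be red, since $x_ix_{i+1}x_{i+2}x_{i+3}x_i$ would be a red $C_4$. Moreover, any two distinct red $2$-chords already force a red $C_4$: for the three relative positions one checks that $x_ix_{i+2}$ together with $x_{i+1}x_{i+3}$, with $x_{i+2}x_{i+4}$, or with $x_{i+3}x_{i+5}$ closes into a $4$-cycle using two edges of $C$. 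Thus at most one $2$-chord is red. But the six $2$-chords are exactly the edges of the two triangles $x_1x_3x_5$ and $x_2x_4x_6$, so at least one of these triangles is entirely blue, giving a blue $C_3$ --- a contradiction. This proves $R(\{C_4\},\{C_3,C_6\})\leq 6$, and together with the lower bound establishes $\RGa=\mGa$.

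I expect the red hamiltonian subcase to be the one obstacle of substance: the rest is arithmetic with \Co~\ref{maxredblue} or a direct appeal to \eqref{ub}, \Th~\ref{HS}, and \Pro~\ref{redblue}. The point requiring care is that \emph{every} pair of red $2$-chords (not only those sharing a vertex) produces a red $C_4$; this is what caps the number of red $2$-chords at one and makes the blue triangle unavoidable.
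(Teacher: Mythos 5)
Your proposal is correct, and in the one substantive case ($C_6 \in \Ga_2$, upper bound $6$) it takes a genuinely different route from the paper's. The paper argues from $R(C_3,C_3)=R(C_4,C_4)=6$: an avoiding graph on $6$ vertices must contain a red $C_3$ and a blue $C_4$ whose diagonals are forced red, and the proof then runs a two-case analysis on how the two remaining vertices attach, each branch ending in a red $C_4$, a blue $C_3$, or a blue $C_6$. You instead reduce via the monotonicity behind \eqref{ub} to $R(\{C_4\},\{C_3,C_6\}) \leq 6$ and invoke Lemma~\ref{Lemma X} to split into a blue bipartite case --- dispatched at once by \Pro~\ref{redblue}, since with $\ga^1=4$, $\ga_e^2=6$ one gets $\RbGa=4+6/2-1=6$ --- and a red hamiltonian case, where your chord analysis is complete and correct: red $3$-chords of the red $C_6$ give a red $C_4$ directly, any two red $2$-chords in each of the three relative positions (offsets $1$, $2$, $3$; offsets $4$, $5$ are symmetric) close into a red $C_4$ using two edges of the Hamilton cycle, and since the six $2$-chords are exactly the edges of the triangles $x_1x_3x_5$ and $x_2x_4x_6$, one triangle is entirely blue. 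I verified all three offset classes; the key point you flag --- that \emph{disjoint} pairs of red $2$-chords also force a red $C_4$ --- does hold. What your route buys: it is modular, reusing Lemma~\ref{Lemma X} and \Pro~\ref{redblue} (both established in Section~\ref{prel}, so there is no circularity), it replaces the paper's vertex-by-vertex case chase with a single structural dichotomy, and it isolates exactly where $C_6 \in \Ga_2$ enters (only through $\ga_e^2=6$ in $\RbGa$). What the paper's route buys: it is self-contained modulo the classical values $R(C_3,C_3)=R(C_4,C_4)=6$ and needs neither the hamiltonicity lemma nor the bipartite computation. The remaining ingredients --- evaluating $\mGa$ via \Co~\ref{maxredblue} (where your reading of the second case correctly subsumes $\ga_e^2=\infty$ when $\Ga_2$ contains no even cycle), the lower bound via \eqref{R-comp}, and $\RGa \leq R(C_4,C_3)=7$ via \eqref{ub} and \Th~\ref{HS} --- coincide with the paper's.
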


\begin{proof}
The values of $\mGa$ follow from \Co~\ref{maxredblue}. Hence, by \eqref{R-comp},
\[
\RGa \geq \begin{cases}6 & \h{if $C_6 \in \Ga_2$,} \\ 7 & \h{otherwise.}\end{cases}
\]

We now turn to the upper bounds. By \eqref{ub}, $\RGa \leq R(C_4,C_3)=7$. Thus, from now on, assume that $C_6 \in \Ga_2$. We have to show that $\RGa \leq 6$. Thus, let $G$ be an arbitrary red-blue graph on $6$ vertices, and assume $G$ is $(\Ga_1,\Ga_2)$-avoiding. Since $R(C_3,C_3)=R(C_4,C_4)=6$, $G$ contains a red $C_3$ and a blue $C_4$, say $C=x_1x_2x_3x_4x_1$; let $v_1$ and $v_2$ be the vertices of $G-V(C)$. Then $x_1x_3$ and $x_2x_4$ are red, and, \WLOGtwo, either (1) $v_1v_2x_4v_1$ or (2) $v_2x_2x_4v_2$ is a red $C_3$.

\textbf{Case~1.} Since $G$ contains no red $C_4$, $x_2v_1$ is blue, since $G$ contains no blue $C_3$, $x_3v_1$ is red, and since $G$ contains no red $C_4$, $x_3v_2$ is blue. Now, if $x_2v_2$ is red, then $x_2v_2v_1x_4x_2$ is a red $C_4$, and if $x_2v_2$ is blue, then $x_2v_2x_3x_2$ is a blue $C_3$, contrary to the hypothesis.

\textbf{Case~2.} Since $G$ contains no red $C_4$, either $v_1x_2$ or $v_1x_4$ is blue. In either case, $v_1x_1$ and $v_1x_3$ are red (since $G$ contains no blue $C_3$). Thus $C_{(1)}=x_1v_1x_3x_1$ and $C_{(2)}=x_2v_2x_4x_2$ are red $3$-cycles. Since $G$ contains no red $C_4$, at most one of the edges between $C_{(1)}$ and $C_{(2)}$ is red, whence $G$ contains a blue $C_6$, contrary to the hypothesis.
\end{proof}

\begin{Prop} \label{g1>=5g2=3}
Let $(\Ga_1,\Ga_2) \in \Cc$ with $\ga^1 \geq 5$ and $\ga^2=3$. Then
\[
\RGa=\mGa.
\]
\end{Prop}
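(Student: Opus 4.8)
The plan is to reduce everything to an upper bound and then invoke Lemma~\ref{Lemma X}. First I would note that $\mGa=\RbGa$: indeed, \Co~\ref{maxredblue} expresses $\mGa$ as the maximum of $5$, $\min(\ga^2+\ga_e^1/2-1,2\ga^2-1)$, and $\RbGa$, and since $\ga^2=3$ while $\ga^1\ge 5$ forces $\ga_e^1\ge 6$, the middle term equals $\min(\ga_e^1/2+2,5)=5$, whereas $\RbGa\ge 6$ by \Pro~\ref{redblue}; hence $\mGa=\RbGa$. The lower bound $\RGa\ge\mGa$ is immediate from \eqref{R-comp}, so it remains to prove $\RGa\le\RbGa$. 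I would therefore fix a red-blue graph $G$ on $\RbGa$ vertices and assume, towards a contradiction, that $G$ is $(\Ga_1,\Ga_2)$-avoiding. Because $C_3\in\Ga_2$, the graph $G$ has no blue $C_3$, so Lemma~\ref{Lemma X} forces $G$ to be blue bipartite or red hamiltonian.

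The blue bipartite case is settled at once by the definition of $\RbGa$: a blue bipartite graph on $\RbGa$ vertices already contains a red subgraph from $\Ga_1$ or a blue subgraph from $\Ga_2$, contradicting avoidance. Hence the whole problem concentrates in the red hamiltonian case, where I must produce a red subgraph from $\Ga_1$. Since $C_{\ga^1}\in\Ga_1$, it suffices to find a red $C_{\ga^1}$, and the tool for this is Lemma~\ref{chords}: once I have a red cycle of length at least $\max(\ga^1,6)$ all of whose chords are red, pancyclicity supplies a red $C_{\ga^1}$ (as $3\le\ga^1$). So the task becomes to locate such a cycle, or a red $C_{\ga^1}$ directly.

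To do this I would take $C$ to be a shortest red cycle of length $m\ge\ga^1$ (one exists because $\ga^1\le\RbGa$). If $m=\ga^1$ we are done. Otherwise $m\ge\ga^1+1\ge 6$, and minimality forces every $2$-chord of $C$ to be blue, since a red $2$-chord $x_ix_{i+2}$ would yield a red $C_{m-1}$ with $\ga^1\le m-1<m$. Triangle-freeness applied to $x_i,x_{i+2},x_{i+4}$ then forces every $4$-chord of $C$ red. A red $4$-chord $x_ix_{i+4}$ produces both a red $C_5$ and a red $C_{m-3}$; if $\ga^1=5$ the former is the desired $C_{\ga^1}$, while if $\ga^1\ge 6$ and $m\ge\ga^1+3$ the latter contradicts minimality. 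Consequently, for $\ga^1\ge 6$ only $m\in\{\ga^1+1,\ga^1+2\}$ survives, and a further round of the same reasoning pins down more chords (for $m=\ga^1+2$, all $3$-chords turn out blue and all $5$-chords red, and so on).

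The main obstacle is precisely this residual configuration $\ga^1\ge 6$ with $m\in\{\ga^1+1,\ga^1+2\}$. The forced colouring is extremely rigid, and I expect to close it either by exhibiting a blue $C_3$ among the blue chords or, when no such triangle is present, by splicing $C$ along two of the forced red long chords to build a red $C_{\ga^1}$, contradicting the minimality of $m$; for instance, when $m=\ga^1+2$ the red $4$-chords permit a rerouting of the shape $x_1x_2x_3x_4x_8x_7x_6x_5\cdots$ that omits exactly two vertices. This cycle surgery, whose details depend on $m-\ga^1$ and on residues modulo small numbers, is the technical heart, and I would isolate it as a self-contained claim to keep the bookkeeping under control. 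Note finally that only the absence of a blue $C_3$ is used throughout the red hamiltonian case; the hypotheses involving $\ga_e^2$ enter solely through the value of $\RbGa$ governing the blue bipartite case.
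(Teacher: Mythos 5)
Your setup is sound as far as it goes: the identification $\mGa=\RbGa$, the lower bound via \eqref{R-comp}, the disposal of the blue bipartite branch of Lemma~\ref{Lemma X} by the definition of $\RbGa$, the case $\ga^1=5$ (a red $4$-chord closes a red $C_5$), and the elimination of $m\ge\ga^1+3$ (a red $4$-chord gives a red $C_{m-3}$, contradicting minimality) are all correct. But the surviving cases $\ga^1\ge 6$ with $m\in\{\ga^1+1,\ga^1+2\}$ are exactly where the proof has to happen, and you leave them as an expectation rather than an argument. The chord information your minimality argument yields is weak: only $2$-chords blue and $4$-chords red, and for $m=\ga^1+1$ even the $3$-chords are unconstrained, since a red $3$-chord produces only a red $C_{\ga^1-1}$, which contradicts nothing. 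Any red cycle assembled from cycle edges (step $1$) and $4$-chords (step $4$) has its length tied to congruence conditions modulo $m$; your illustrative splice $x_1x_2x_3x_4x_8x_7x_6x_5$, closed by the red $4$-chord $x_5x_1$, is a red $C_8$ omitting $m-8$ vertices, so it yields a red $C_{\ga^1}$ only in the single case $(\ga^1,m)=(8,10)$. Producing a red cycle of length exactly $\ga^1$ from this sparse red structure requires a genuine construction with case distinctions depending on residues, and you have not supplied it; the proposal therefore has a real gap precisely at what you yourself call its technical heart.

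The paper sidesteps this impasse by anchoring on the blue side instead of shortening a long red cycle. In the non-blue-bipartite case it takes a \emph{shortest odd blue} cycle $C$, of length $2k+1\ge 5$ since $C_3\in\Ga_2$; minimality then forces \emph{all} chords of $C$ to be red, which is vastly stronger than your $2$-/$4$-chord data. If $2k+1>\ga^1$, Lemma~\ref{chords} (via Bondy's Lemma~\ref{Bondy}) makes $G[V(C)]$ red pancyclic, hence gives a red $C_{\ga^1}$. If $2k+1\le\ga^1$, the paper applies Lemma~\ref{Lemma X} not to all of $G$, as you do, but to an induced subgraph $G[U]$ on exactly $\ga^1$ vertices with $V(C)\seq U$ (possible since $|G|=\RbGa\ge\ga^1+1$): then $G[U]$ has no blue $C_3$ and is not blue bipartite, so it is red hamiltonian, i.e., contains a red cycle of exactly the right length $\ga^1$. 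The idea you missed is that Lemma~\ref{Lemma X} can be calibrated by applying it to a subgraph of prescribed order, so one never needs to descend from a longer red Hamilton cycle. This also repairs your argument directly: in your red hamiltonian branch $G$ is not blue bipartite (that case was already dismissed), hence contains an odd blue cycle, and the two cases above finish the proof with no chord surgery at all.
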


\begin{proof}
Put $n=\ga^1$. Since $\RbGa \leq \mGa \leq \RGa$, it suffices to prove that $\RGa \leq \RbGa$. Thus, let $G$ be an arbitrary red-blue graph on $\RbGa$ vertices, and assume $G$ is $(\Ga_1,\Ga_2)$-avoiding. Then $G$ contains an odd blue cycle. Let $C=x_1x_2 \cd x_{2k+1}x_1$ be a shortest odd blue cycle in $G$; note that $k \geq 2$. Were some chord of $C$ blue, $G$ would contain an odd blue cycle shorter than $C$, whence all chords of $C$ are red.

We now show that $G$ contains a red $C_n$, contrary to the hypothesis. If $2k+1 \leq n$, let $U$ be an $n$-subset of $V(G)$ such that $G[U]$ contains $C$. Then $G[U]$ is not blue bipartite, whence, by Lemma~\ref{Lemma X}, $G[U]$ contains a red $C_n$. On the other hand, if $2k+1>n$, then, by Lemma~\ref{chords}, $G[V(C)]$ contains a red $C_n$.
\end{proof}

\begin{Prop} \label{g1>=5g2=4}
Let $(\Ga_1,\Ga_2) \in \Cc$ with $\ga^1 \geq 5$ and $\ga^2=4$. Then
\[
\RGa=\mGa.
\]
\end{Prop}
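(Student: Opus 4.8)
The plan is to prove $\RGa \le \mGa$, the reverse inequality $\mGa \le \RGa$ being immediate from \eqref{R-comp}. Write $n = \ga^1 \ge 5$. Since $\ga^2 = 4$ is even we have $\ga_e^2 = 4$, so \Co~\ref{maxredblue} gives
\[
\mGa = \max\!\big(5,\, \min(3 + \ga_e^1/2,\, 7),\, n+1\big) = \max\!\big(n+1,\, \min(3 + \ga_e^1/2,\, 7)\big),
\]
the last equality because $n+1 \ge 6 > 5$. Note also that $C_n \in \Ga_1$ and $C_4 \in \Ga_2$.

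Next I would compare $\mGa$ with the ordinary Ramsey number $R(C_n,C_4)$, which by \Th~\ref{HS} equals $n+1$ when $n \ge 6$ (here $(n,4) \in \De_1$) and equals $7$ when $n = 5$ (here $(5,4) \in \De_2$). A short case check shows that $\mGa = R(C_n,C_4)$ in every case \emph{except} $n = 5$ with $\ga_e^1 = 6$, where $\mGa = 6 < 7 = R(C_5,C_4)$. In all non-exceptional cases, \eqref{ub} applied to $C_n \in \Ga_1$ and $C_4 \in \Ga_2$ yields $\RGa \le R(C_n,C_4) = \mGa$, and we are done.

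The hard part is the exceptional case $n = 5$, $\ga_e^1 = 6$, for which $\mGa = 6$ while \Th~\ref{HS} only gives the too-weak bound $\RGa \le 7$. Here $\{C_5,C_6\} \seq \Ga_1$ and $C_4 \in \Ga_2$, so by \eqref{ub} it suffices to prove $R(\{C_5,C_6\},\{C_4\}) \le 6$; that is, every red-blue graph $G$ on $6$ vertices contains a red $C_5$, a red $C_6$, or a blue $C_4$. Suppose not. By Lemma~\ref{Lemma 3.1} (with $n = 5$ and $k = 4$, so $5 + 4/2 - 1 = 6$ vertices), $G$ contains a monochromatic cycle of length at least $5$ or a blue $C_4$; since the blue $C_4$ and the red $C_5$, $C_6$ are all excluded, $G$ must contain a blue $C_5$ or a blue $C_6$.

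Finally I would rule out these last two possibilities. A blue $C_6$ produces a blue $C_4$ by Lemma~\ref{Lemma 2.1}(b) (taking $2n = 6$), a contradiction. For a blue $C_5 = y_1y_2y_3y_4y_5y_1$, any blue chord $y_iy_{i+2}$ would close up with the three consecutive cycle edges $y_{i+2}y_{i+3}y_{i+4}y_i$ into a blue $C_4$, so all five chords of the cycle are red; but these five chords are exactly the edges of the ``pentagram'' $y_1y_3y_5y_2y_4y_1$, which is then a red $C_5$, again a contradiction. This settles the exceptional case and completes the proof. The only genuinely new work is this six-vertex analysis, and the pentagram observation is its crux, since it is what forces a red $C_5$ out of a blue $C_5$ once blue $C_4$'s are forbidden.
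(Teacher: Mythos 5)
Your case split and arithmetic are correct, and your route is essentially the paper's, unpacked: the paper disposes of $\ga^1 \geq 6$ via $\Cb^1$ and of $(\ga^1,\ga_e^1)=(5,6)$ via $\Cb^4$ in \Pro~\ref{C1C2}, and handles the remaining case by $\RGa \leq R(C_5,C_4)=7=\mm$; your identification $\mGa=R(C_n,C_4)$ in the non-exceptional cases amounts to the same thing, and your reduction of the exceptional case to $R(\{C_5,C_6\},\{C_4\}) \leq 6$ is exactly the inequality the paper extracts from the $\Cb^4$ part of \Pro~\ref{C1C2} (there obtained by chaining Lemmas~\ref{Lemma 3.1}, \ref{Lemma 2.1}, and \ref{Lemma 3.3}). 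Your application of Lemma~\ref{Lemma 3.1} and your pentagram argument for the blue $C_5$ are both correct.

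There is, however, one genuine error: Lemma~\ref{Lemma 2.1}(b) does not preserve colour. From a blue $C_6$ it yields only a \emph{monochromatic} $C_4$, which may be red, and a red $C_4$ is not forbidden in your setting (here $\ga^1=5$, so $C_4 \notin \Ga_1$); hence no contradiction follows from that step as written. The colour switch is not a technicality: a blue $C_6$ all of whose chords are red contains a red $C_4$ (e.g.\ $y_1y_3y_6y_4y_1$) but no blue $C_4$, so the lemma genuinely cannot be strengthened to deliver a blue one. The fix is immediate and lies in the paper's own toolkit: apply Lemma~\ref{Lemma 3.3} with $n=6$ and $k=4$, which converts the blue $C_6$ into a red $C_6$ or a blue $C_4$, both excluded --- this is precisely how the paper's proof of \Pro~\ref{C1C2} eliminates blue cycles in the $\Cb^4$ case (and with $n=5$, $k=4$ it would also replace your pentagram argument). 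Alternatively, a direct chord analysis works: every $3$-chord of the blue $C_6$ must be red (a blue $3$-chord closes a blue $C_4$ with three cycle edges), any two blue $2$-chords also yield a blue $C_4$, and the resulting red chords contain a red $C_5$, e.g.\ $y_1y_3y_5y_2y_4y_1$ when the possibly blue $2$-chord is $y_5y_1$. With this one step repaired, your proof is complete.
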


\begin{proof}
If $\ga^1 \geq 6$, then $(\Ga_1,\Ga_2) \in \Cb^1$, and if $\ga^1=5$ and $\ga_e^1=6$, then $(\Ga_1,\Ga_2) \in \Cb^4$, as defined in the proof of \Pro~\ref{C1C2}. Hence, if $\ga^1 \geq 6$ or $\ga_e^1=6$, the result follows from \Pro~\ref{C1C2}. Otherwise, $\RGa \geq \mm=7$, by \Co~\ref{maxredblue}. Since $\RGa \leq R(C_5,C_4)=7$, this completes the proof.
\end{proof}

\begin{Prop} \label{g1=g2=5}
Let $(\Ga_1,\Ga_2) \in \Cc$ with $\ga^1=\ga^2=5$. Then
\[
\RGa=\mGa=\begin{cases}7 & \h{if $\max(\ga_e^1,\ga_e^2)=6$,} \\ 8 & \h{if $\max(\ga_e^1,\ga_e^2)=8$,} \\ 9 & \h{if $\max(\ga_e^1,\ga_e^2) \geq 10$.}\end{cases}
\]
\end{Prop}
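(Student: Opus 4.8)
The plan is to prove the two inequalities in $\RGa=\mGa$ separately, after evaluating $\mGa$. Applying \Co~\ref{maxredblue} with $\ga^1=\ga^2=5$ gives
\[
\mGa=\max\!\big(5,\min(\ga_e^1/2+4,9),\min(\ga_e^2/2+4,9)\big),
\]
and since $\ga_e^1,\ga_e^2$ are even and at least $6$, this equals $7$, $8$, or $9$ exactly as $\max(\ga_e^1,\ga_e^2)$ equals $6$, equals $8$, or is at least $10$; this is the displayed value, and the bound $\RGa\geq\mGa$ is immediate from \eqref{R-comp}. For the upper bound $\RGa\leq\mGa$ I would use \eqref{ub} to reduce to $\Ga_1=\{C_5,C_{\ga_e^1}\}$ and $\Ga_2=\{C_5,C_{\ga_e^2}\}$ (which leaves $\mGa$ unchanged, as it depends only on $\ga^1,\ga_e^1,\ga^2,\ga_e^2$), and assume $\ga_e^1\leq\ga_e^2$ by the symmetry $\RGa=R(\Ga_2,\Ga_1)$. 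Throughout I take a $(\Ga_1,\Ga_2)$-avoiding red-blue graph $G$ with $|G|=\mGa$, so that neither $\Gr$ nor $\Gb$ has a $C_5$, and aim for a contradiction.

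When $\mGa=9$ nothing is needed beyond \eqref{ub} and \Th~\ref{HS}: $\RGa\leq R(C_5,C_5)=9$. When $\mGa=7$ (so $\ga_e^1=\ga_e^2=6$), note that $\Gr$ and $\Gb$ cannot both be bipartite, since $|G|=7>4$; say $\Gb$ contains an odd cycle (the other case is symmetric). As $\Gb$ has no $5$-cycle, the argument proving Claim~\ref{max} applies --- its case $n=4$ is exactly what arises when $|G|=7$ --- and shows that a longest red cycle omits at most two vertices, so $\Gr$ has circumference (longest cycle length) at least $5$. But $\Gr$ has no $C_5$ and no $C_6$, while a red $C_7$ is excluded by Lemma~\ref{new1} (it would yield a red $C_5$ or $C_6$); this contradiction settles $\mGa=7$.

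The case $\mGa=8$ splits according to $\ga_e^1$. If $\ga_e^1=6$ (and $\ga_e^2=8$): when $\Gb$ contains an odd cycle, \Pro~\ref{new} produces a red cycle of length in $[6,|G|-2]=\{6\}$, that is, a red $C_6=C_{\ga_e^1}$; when $\Gb$ is bipartite, its parts have size at most $4$ (a part of size $5$ would be a red $K_5\supseteq C_5$), hence size $4$ each, and the absence of a red $C_5$ forces at most one red edge across the bipartition (any two crossing red edges extend, through one of the two red $K_4$'s, to a red $C_5$), so $\Gb\qes K_{4,4}-e$ is hamiltonian and contains a blue $C_8=C_{\ga_e^2}$. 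Either way $G$ is not avoiding. If instead $\ga_e^1=\ga_e^2=8$, the same crossing-edge analysis disposes of the cases where $\Gr$ or $\Gb$ is bipartite: a bipartite colour then contains $K_{4,4}-e$ (the other colour can contribute at most one crossing edge without producing a monochromatic $C_5$), and hence a monochromatic $C_8$.

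The one remaining, genuinely hard case is $\ga_e^1=\ga_e^2=8$ with both $\Gr$ and $\Gb$ non-bipartite: here I must show that a $2$-colouring of $K_8$ with no monochromatic $C_5$ contains a monochromatic $C_8$. By Claim~\ref{max} applied to each colour, both colours have circumference at least $6$, so if neither is hamiltonian both have circumference in $\{6,7\}$. The plan is to rule this out with the cycle-extension machinery: given a red $C_7$ with an outside vertex $v$, Lemma~\ref{old} (with $n=7$) confines the red neighbours of $v$ to mutual distance $2$ on the cycle, leaving $v$ with at least five blue neighbours, after which the chords of the $C_7$ (severely restricted by the absence of a red $C_5$) together with Lemma~\ref{new2} are used to assemble a blue $C_8$; the circumference-$6$ configurations are handled analogously. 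This chord-and-extension analysis, which forces the hamiltonian monochromatic cycle, is the main obstacle, and I expect it to demand the most detailed casework in the proof.
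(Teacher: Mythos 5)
Most of your proposal is sound, and where it is complete it genuinely diverges from the paper. Your evaluation of $\mGa$, the lower bound via \eqref{R-comp}, the reduction by \eqref{ub} to $\Ga_i=\{C_5,C_{\ga_e^i}\}$, and the case $\mGa=9$ all match the paper. For $\mGa=7$, however, the paper argues directly on $7$ vertices: it extracts a monochromatic $C_4$ from $R(C_4,C_4)=6$ and chases edge colours through a short case analysis until a monochromatic $C_5$ or $C_6$ appears. You instead observe that both colours cannot be bipartite on $7>4$ vertices, invoke the argument of Claim~\ref{max} (whose $n=4$ case indeed goes through verbatim at $|G|=7$, since the hypothesis $|G|\geq 8$ in the proof of \Pro~\ref{new} is only used to make the interval $[6,|G|-2]$ non-empty) to force red circumference at least $5$, and kill circumference $7$ with Lemma~\ref{new1}. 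That is correct and arguably cleaner, since it recycles the Section~\ref{prel} machinery rather than starting a fresh vertex chase. Your $(\ga_e^1,\ga_e^2)=(6,8)$ subcase is also fully correct: \Pro~\ref{new} gives the forbidden red $C_6$ when $\Gb$ has an odd cycle, and when $\Gb$ is bipartite the parts must be $(4,4)$, two red crossing edges (disjoint or sharing a vertex) do extend through a red $K_4$ to a red $C_5$, and $K_{4,4}-e$ is hamiltonian, yielding the forbidden blue $C_8$. The same bipartite analysis correctly disposes of the bipartite scenarios when $\ga_e^1=\ga_e^2=8$.

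The genuine gap is the case you yourself flag: $\ga_e^1=\ga_e^2=8$ with both $\Gr$ and $\Gb$ non-bipartite, i.e.\ the assertion that every red-blue $K_8$ with no monochromatic $C_5$ contains a monochromatic $C_8$. Here you give only a plan, not a proof: Claim~\ref{max} does give both colours circumference at least $6$, and Lemma~\ref{old} with $n=7$ does restrict the red neighbours of an external vertex on a red $C_7$ to the allowed gaps $j\in\{2,5\}$ (hence at most two red neighbours, assuming no red $C_8$), but the step ``assemble a blue $C_8$ from the restricted chords'' is exactly where all the difficulty lives, and nothing there is verified; the circumference-$6$ configurations are not even sketched. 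To be fair, the paper's own write-up stops at the same point --- it proves only the $\max(\ga_e^1,\ga_e^2)=6$ upper bound in detail and asserts that $\RGa\leq 8$ when $\max(\ga_e^1,\ga_e^2)\leq 8$ ``requires more work, but the techniques are similar'' --- and your proposal actually commits more to paper than the published proof does for $\mGa=8$. Still, measured against the statement of \Pro~\ref{g1=g2=5}, your argument is incomplete at precisely its hardest point, so the $(8,8)$ non-bipartite case stands as an unproven claim rather than a proof.
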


\begin{proof}
The values of $\mGa$ follow from \Co~\ref{maxredblue}. Hence, by \eqref{R-comp},
\[
\RGa \geq \begin{cases}7 & \h{if $\max(\ga_e^1,\ga_e^2)=6$,} \\ 8 & \h{if $\max(\ga_e^1,\ga_e^2)=8$,} \\ 9 & \h{if $\max(\ga_e^1,\ga_e^2) \geq 10$.}\end{cases}
\]

We now turn to the upper bounds. By \eqref{ub}, $\RGa \leq R(C_5,C_5)=9$. We show that $\RGa \leq 7$ if $\max(\ga_e^1,\ga_e^2)=6$. To show that $\RGa \leq 8$ if $\max(\ga_e^1,\ga_e^2) \leq 8$ requires more work, but the techniques are similar.

Let $G$ be an arbitrary red-blue graph on $7$ vertices, and assume $G$ is $(\Ga_1,\Ga_2)$-avoiding. Since $R(C_4,C_4)=6$, $G$ contains a monochromatic $C_4$; say that $C=x_1x_2x_3x_4x_1$ is a blue $C_4$, and let $v_1$, $v_2$, and $v_3$ be the vertices of $G-V(C)$. Since $G$ contains no blue $C_5$, each $v_i$ is red adjacent to two opposite vertices of $C$. \WLOGone, assume that $v_1$ and $v_2$ are red adjacent to $x_1$ and $x_3$.

Since $G$ contains no red $C_5$, $v_3$ is blue adjacent to two opposite vertices of the red $C_4$ $v_1x_1v_2x_3v_1$. Thus, (1) $v_3x_1$ and $v_3x_3$ are red, in which case $v_3v_1$ and $v_3v_2$ are blue, (2) $v_3x_2$ and $v_3x_4$ are red, and $v_3x_1$ and $v_3x_3$ are blue, or (3) $v_3x_2$ and $v_3x_4$ are red, and $v_3v_1$ and $v_3v_2$ are blue. Note that Cases~1 and 2 are symmetric with respect to colour interchange. Thus we only have to consider Cases~1 and 3. In either case, were $v_1x_2$ and $v_2x_4$ blue, $v_1x_2x_1x_4v_2v_3v_1$ would be a blue $C_6$, whence at least one of them is red, say $v_1x_2$.

\textbf{Case~1.} Were $x_2v_2$ red, $x_2v_2x_1v_3x_3v_1x_2$ would be a red $C_6$, whence $x_2v_2$ is blue. Hence, were $x_4v_3$ blue, $x_4v_3v_2x_2x_1x_4$ would be a blue $C_5$, whence $x_4v_3$ is red. Now, if $x_4v_1$ is red, then $x_4v_1x_1v_2x_3v_3x_4$ is a red $C_6$, and if $x_4v_1$ is blue, then $x_4v_1v_3v_2x_2x_1x_4$ is a blue $C_6$, contrary to the hypothesis.

\textbf{Case~3.} Were $x_4v_2$ red, $x_4v_2x_1v_1x_2v_3x_4$ would be a red $C_6$, whence $x_4v_2$ is blue. Now, if $x_1v_3$ is red, then $x_1v_3x_2v_1x_3v_2x_1$ is a red $C_6$, and if $x_1v_3$ is blue, then $x_1v_3v_2x_4x_3x_2x_1$ is a blue $C_6$, contrary to the hypothesis.
\end{proof}

\begin{Prop} \label{g1>=6g2=5}
Let $(\Ga_1,\Ga_2) \in \Cc$ with $\ga^1 \geq 6$ and $\ga^2=5$. Then
\[
\RGa=\mGa.
\]
\end{Prop}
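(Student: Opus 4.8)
The plan is to prove $\RGa \leq \mGa$, since the reverse inequality $\RGa \geq \mGa$ follows immediately from \eqref{R-comp}. With $\ga^1 \geq 6$ and $\ga^2=5$, I first work out what $\mGa$ actually is using \Co~\ref{maxredblue}. Here $\ga^2=5$ is odd, so the term $\min(\ga^2+\ga_e^1/2-1,2\ga^2-1)=\min(5+\ga_e^1/2-1,9)$, while the other term involving $\ga^1$ is $\min(\ga^1+\ga_e^2/2-1,2\ga^1-1)$; since $\ga^1 \geq 6$ this latter term is at least $11$ unless $\ga_e^2$ is small. So in most subcases $\mGa$ is governed by the $\ga^1$-term, and I expect $\mGa = \min(\ga^1+\ga_e^2/2-1,2\ga^1-1)$, which equals $\RbGa$ by \Pro~\ref{redblue}. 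The overall strategy is therefore to show that any $(\Ga_1,\Ga_2)$-avoiding red-blue graph $G$ on $\RbGa$ vertices leads to a contradiction by producing a forbidden cycle, leaning heavily on \Pro~\ref{new}.

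The key mechanism is \Pro~\ref{new}: since $G$ avoids all blue cycles in $\Ga_2$ (in particular it has no blue $C_5$, as $C_5 \in \Ga_2$), if $\Gb$ contains \emph{any} odd cycle, then $\Gr$ contains every red cycle length in $[6,|G|-2]$. First I would dispose of the case where $\Gb$ is bipartite: then $G$ is a red bipartite graph, so by the definition of $\Rr$ (Definition~\ref{redbluedefn}) and the observation that $\RrGa \leq \mGa$, the graph $G$ on $\RbGa=\mGa$ vertices must already contain a red member of $\Ga_1$ or a blue member of $\Ga_2$, contradicting avoidance. (I should double-check the bookkeeping relating $\RrGa$ and $\RbGa$ here, as it is the one place the two bipartite numbers interact.) So I may assume $\Gb$ has an odd cycle, hence \Pro~\ref{new} applies and $\Gr$ contains red cycles of every length in $[6,|G|-2]$.

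It then remains to force a red cycle whose length lies in $\Ga_1$, or to extend the range $[6,|G|-2]$ enough to cover a short cycle of $\Ga_1$. Since $\ga^1 \geq 6$, the shortest red cycle we must forbid has length $\ga^1 \geq 6$; I would argue that $\ga^1 \leq |G|-2$ in the relevant subcase so that $C_{\ga^1}$ is directly produced, and handle the boundary subcases (where $\ga^1$ is close to $|G|$, so $\RbGa=\ga^1+\ga_e^2/2-1$ is small and $\ga_e^2$ is small) separately. In those boundary subcases, where $\mGa=\ga^1+\ga_e^2/2-1$ rather than $2\ga^1-1$, I expect to use \Pro~\ref{new} together with Lemmas~\ref{new1} and~\ref{new2} to climb to the red cycle of length $\ga^1$ while the even-cycle constraint $\ga_e^2$ shortens the graph, closely paralleling the blue-bipartite computation in \Pro~\ref{redblue}.

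The main obstacle I anticipate is the interface between the two regimes of $\mGa$: the jump between $\mGa=2\ga^1-1$ (when $2\ga^1 \leq \ga_e^2$) and $\mGa=\ga^1+\ga_e^2/2-1$ (when $2\ga^1>\ga_e^2$), and in particular the small-$\ga_e^2$ boundary cases $\ga_e^2 \in \{6,8\}$ where $\mGa$ may be pinned at $9$ or $11$ by the $\min(\ga^2+\ga_e^1/2-1,9)$ term rather than the $\ga^1$-term. Verifying that \Pro~\ref{new}'s guaranteed range $[6,|G|-2]$ together with the odd/even cycle lemmas covers $\ga^1$ in \emph{every} such boundary case — rather than falling one vertex short — is the delicate part, and is presumably exactly why \Pro~\ref{new} was engineered to give the two-vertex slack $|G|-2$ rather than $|G|-1$.
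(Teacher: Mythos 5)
Your route is the paper's route: reduce to showing that no $(\Ga_1,\Ga_2)$-avoiding graph exists on $\RbGa$ vertices, dispose of the bipartite case via a bipartite Ramsey number, and apply \Pro~\ref{new} otherwise. But two steps need repair. First, the bipartite case as you state it is wrong: if $\Gb$ is bipartite then $G$ is \emph{blue} bipartite, and $\RrGa$ --- which constrains graphs whose \emph{red} subgraph is bipartite --- says nothing about it. The correct (and simpler) move is the definition of $\RbGa$ itself: $G$ is blue bipartite on $\RbGa$ vertices, so it already contains a red member of $\Ga_1$ or a blue member of $\Ga_2$, a contradiction. Relatedly, your opening computation of which term of \Co~\ref{maxredblue} dominates $\mGa$ is unnecessary: since $\RbGa \leq \mGa$ by definition of $\mm$ and $\mGa \leq \RGa$ by \eqref{R-comp}, proving $\RGa \leq \RbGa$ forces all three quantities to coincide. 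This sandwich is exactly how the paper opens its proof, and it makes your worry about $\mGa$ being ``pinned'' by the $\ga^2$-term moot.

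Second, the boundary cases you plan to handle ``separately'' with Lemmas~\ref{new1} and~\ref{new2} do not exist, and seeing this is a one-line computation rather than a climbing argument. Since $\ga^2=5$ is odd, $\ga_e^2 \geq 6$; hence by \Pro~\ref{redblue}, $\RbGa=\min(\ga^1+\ga_e^2/2-1,\,2\ga^1-1) \geq \ga^1+2$, because the first term is at least $\ga^1+2$ and $2\ga^1-1 \geq \ga^1+5$ as $\ga^1 \geq 6$. So $|G|-2 \geq \ga^1 \geq 6$, and the range $[6,|G|-2]$ guaranteed by \Pro~\ref{new} always contains $\ga^1$: the two-vertex slack covers the worst case $\ga_e^2=6$ exactly, as you conjectured at the end. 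With these two repairs your proposal collapses to the paper's proof, which consists precisely of the sandwich $\RbGa \leq \mGa \leq \RGa$, the observation $|G| \geq \ga^1+2$ from \Pro~\ref{redblue}, and one application of \Pro~\ref{new}.
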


\begin{proof}
Put $n=\ga^1$. Since $\RbGa \leq \mGa \leq \RGa$, it suffices to prove that $\RGa \leq \RbGa$. Thus, let $G$ be an arbitrary red-blue graph on $\RbGa$ vertices, and assume $G$ is $(\Ga_1,\Ga_2)$-avoiding. Then $G$ contains an odd blue cycle. Furthermore, by \Pro~\ref{redblue}, $|G| \geq n+2$. Hence, it follows from \Pro~\ref{new} that $G$ contains a red $C_n$, contrary to the hypothesis.
\end{proof}

\begin{Prop} \label{g1>=6g2=6}
Let $(\Ga_1,\Ga_2) \in \Cc$ with $\ga^1 \geq 6$ and $\ga^2=6$. Then
\[
\RGa=\mGa.
\]
\end{Prop}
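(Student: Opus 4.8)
\emph{The plan} is to reduce to the upper bound and then defer almost every case to \Pro~\ref{C1C2}. Since $\ga_e^2=\ga^2=6$, \Co~\ref{maxredblue} gives $\mGa=\max\!\big(\min(5+\ga_e^1/2,\,11),\,\ga^1+2\big)$, and by \eqref{R-comp} we already have $\mGa\leq\RGa$; so it remains only to prove $\RGa\leq\mGa$.

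With $\ga_e^2=6$, condition~(i) in the definition of $\Cc_2$ (that is, condition~(i) for $\Cc_1$ with the superscripts $1$ and $2$ interchanged) reads ``$\ga^1$ is even, or $\ga^1\geq 9$'', which holds for every $\ga^1\geq 6$ except $\ga^1=7$. For $\ga^1=7$, condition~(iii) for $\Cc_2$ covers $\ga_e^1=8$, and condition~(ii) for $\Cc_1$ covers $\ga_e^1\geq 12$ (here $\ga^1\equiv 1$, $\ga_e^1\geq 2\ga^2=12$, and $0\equiv\ga^2=6\geq 2\ga^1/3$). In all of these cases $(\Ga_1,\Ga_2)\in\Cc_1\cup\Cc_2$, so \Pro~\ref{C1C2} yields $\RGa=\mGa$. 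The sole leftover is $\ga^1=7$ and $\ga_e^1=10$, where $\mGa=\RrGa=10$.

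For this leftover it suffices to prove that every red-blue graph $G$ on $10$ vertices contains a red $C_7$, a red $C_{10}$, or a blue $C_6$, since $C_7,C_{10}\in\Ga_1$ and $C_6\in\Ga_2$; this forces $\RGa\leq 10=\mGa$. As $(8,6)\in\De_1$, \Th~\ref{HS} gives $R(C_8,C_6)=10$, so $G$ contains a red $C_8$ or a blue $C_6$; in the latter case we are done. Hence I would assume $C=x_1x_2\cd x_8x_1$ is a red $C_8$, with $u,w$ the two vertices outside $C$. If some $2$-chord $x_ix_{i+2}$ is red, then $x_ix_{i+2}x_{i+3}\cd x_{i-1}x_i$ is a red $C_7$ and we are done; so I may assume all $2$-chords of $C$ are blue, which produces two blue $4$-cycles $x_1x_3x_5x_7x_1$ and $x_2x_4x_6x_8x_2$.

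What remains is to analyse the edges joining $u$ and $w$ to $C$ and to each other. Red edges tend to build the missing red $C_{10}$: for instance, if $x_iu$, $uw$, and $wx_{i+1}$ are red, then $x_iuwx_{i+1}x_{i+2}\cd x_{i-1}x_i$ is a red $C_{10}$, and likewise if $u$ (or $w$) is red adjacent to two consecutive vertices of $C$; meanwhile the blue edges from $u$ and $w$ into the two blue $4$-cycles should be assembled into a blue $C_6$. The main obstacle is to organise this case analysis so that one of a red $C_7$, a red $C_{10}$, or a blue $C_6$ always appears: the chord-pushing moves from the proofs of Lemma~\ref{chords} and \Pro~\ref{new} indicate the right manoeuvres, but the presence of the two free vertices $u,w$, together with the need to avoid \emph{both} a red $C_{10}$ and a blue $C_6$ simultaneously, makes this the delicate heart of the proof.
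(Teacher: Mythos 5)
Your reduction is correct and is in fact identical to the paper's: condition (i) for $\Cc_2$ (the paper's $\Cb^1$) covers every $\ga^1 \neq 7$, condition (iii) for $\Cc_2$ ($\Cb^4$) covers $(\ga^1,\ga_e^1)=(7,8)$, condition (ii) for $\Cc_1$ ($\Cr^2$) covers $\ga^1=7$ with $\ga_e^1 \geq 12$, and the sole leftover is $(\ga^1,\ga_e^1)=(7,10)$ with $\mm=10$; your opening moves there (a red $C_8$ from $R(C_8,C_6)=10$, then all $2$-chords of $C$ blue, producing two blue $4$-cycles) also match the paper. But at that point you stop: the remaining case analysis, which you yourself call ``the delicate heart of the proof,'' is precisely the content of this proposition that is not delegated to \Pro~\ref{C1C2}, so the proposal has a genuine gap exactly where the work is. Moreover, one of the two moves you do indicate is wrong: if $u$ is red adjacent to two consecutive vertices of $C$, inserting $u$ yields a red $C_9$, not a red $C_{10}$, and $C_9$ need not lie in $\Ga_1$ (the upper bound must hold in particular for $\Ga_1=\{C_7,C_{10}\}$). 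Since only a red $C_7$, a red $C_{10}$, or a blue $C_6$ is available, single-vertex insertions into the red $C_8$ are useless by parity, and this constraint is what makes the case delicate.

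The organising idea missing from your sketch is to pin down the chords of the red $C_8$ \emph{before} touching the two outside vertices. Writing the red $C_8$ as $x_1y_3x_2y_4x_3y_1x_4y_2x_1$, so that the blue $2$-chords form the blue $4$-cycles $x_1x_2x_3x_4x_1$ and $y_1y_2y_3y_4y_1$, the paper argues: either all edges between $X=\{x_i\}$ and $Y=\{y_i\}$ are red, or some edge, say $x_1y_1$, is blue, in which case $x_2y_2$, $x_3y_3$, $x_4y_4$ are forced red (else a blue $C_6$ through the two blue $4$-cycles); in either case the $4$-chords $x_1x_3$, $x_2x_4$, $y_1y_3$, $y_2y_4$ are blue (else a red $C_7$), and then two disjoint blue $X$--$Y$ edges would close a blue $C_6$, so all blue $X$--$Y$ edges share a common vertex. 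Only with this chord structure in hand are $v_1,v_2$ (your $u,w$) treated: a vertex red adjacent to both $X$ and $Y$ yields a red $C_7$ (using a red $X$--$Y$ chord as a shortcut---this is why the chord step must come first), both outside vertices blue adjacent to all of the same part yields a blue $C_6$, so without loss of generality $v_1$ is blue adjacent to all of $X$ and $v_2$ to all of $Y$, and then either colour of $v_1v_2$ forces a red $C_{10}$ or a blue $C_6$. You would need to supply an argument of essentially this shape to close the gap.
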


\begin{proof}
If $\ga^1 \neq 7$, then $(\Ga_1,\Ga_2) \in \Cb^1$, if $\ga^1=7$ and $\ga_e^1 \geq 12$, then $(\Ga_1,\Ga_2) \in \Cr^2$, and if $\ga^1=7$ and $\ga_e^1=8$, then $(\Ga_1,\Ga_2) \in \Cb^4$, as defined in the proof of \Pro~\ref{C1C2}. Hence, if $(\ga^1,\ga_e^1) \neq (7,10)$, the result follows from \Pro~\ref{C1C2}. Otherwise, $\RGa \geq \mm=10$, by \Co~\ref{maxredblue}, whence we have to show that $\RGa \leq 10$. Thus, let $G$ be an arbitrary red-blue graph on $10$ vertices, and assume $G$ is $(\Ga_1,\Ga_2)$-avoiding.

Since $R(C_8,C_6)=10$, $G$ contains a red $C_8$, say $C=x_1y_3x_2y_4x_3y_1x_4y_2x_1$; let $X=\{x_i\}$ and $Y=\{y_i\}$, and let $v_1$ and $v_2$ be the vertices of $G-V(C)$. Since $G$ contains no red $C_7$, $x_1x_2x_3x_4x_1$ and $y_1y_2y_3y_4y_1$ are blue $4$-cycles. Either all edges between $X$ and $Y$ are red, or at least one of them, say $x_1y_1$, is blue. In the latter case, $x_2y_2$, $x_3y_3$, and $x_4y_4$ are red (since $G$ contains no blue $C_6$). Hence, in either case, $x_1x_3$, $x_2x_4$, $y_1y_3$, and $y_2y_4$ are blue (since $G$ contains no red $C_7$). Therefore, two disjoint blue edges between $X$ and $Y$ would yield a blue $C_6$, whence all blue edges between $X$ and $Y$ have a common vertex, which we may assume is $x_1$.

Were some $v_j$ red adjacent to both $X$ and $Y$ (i.e., to at least one vertex in $X$ and at least one vertex in $Y$), $G$ would contain a red $C_7$, and if both $v_1$ and $v_2$ were blue adjacent to either all $x_i$ or all $y_i$, $G$ would contain a blue $C_6$. Hence, we may assume that $v_1$ is blue adjacent to all $x_i$ while $v_2$ is blue adjacent to all $y_i$. Now, if $v_1v_2$ is red, then, since $v_2$ is red adjacent to $X$ and $v_1$ is red adjacent to $Y$, we obtain a red $C_{10}$. On the other hand, if $v_1v_2$ is blue, then if $v_2$ is blue adjacent to $X$ or $v_1$ is blue adjacent to $Y$, we obtain a blue $C_6$, and if $v_2$ is red adjacent to all $x_i$ and $v_1$ is red adjacent to all $y_i$, we obtain a red $C_{10}$, contrary to the hypothesis.
\end{proof}

We are now ready to prove \Th~\ref{MT}.

\begin{proof}[Proof of \Th~\ref{MT}]
The first claim follows from \eqref{R-comp} and \Pro~\ref{C3C3orC4C4}, and the second claim is an immediate consequence of \Pros~\ref{C1C2}--\ref{g1>=6g2=6}.
\end{proof}

\subsection{Structural results} \label{struc}

The main goal of this subsection is to state and prove \Th~\ref{struc35+}, the unabridged version of \Th~\ref{struc35}. We first prove that many $(\Ga_1,\Ga_2)$-avoiding graphs, not only critical ones, have to be blue bipartite.

\begin{Prop} \label{avoiding}
Given $(\Ga_1,\Ga_2) \in \Cc$ and a $(\Ga_1,\Ga_2)$-avoiding graph $G$, assume that either
\[
\h{$\ga^1 \geq 5$, $\ga^2=3$, and $|G| \geq \ga^1$}
\]
or
\[
\h{$\ga^1 \geq 6$, $\ga^2=5$, and $|G| \geq \ga^1+2$.}
\]
Then $G$ is blue bipartite.
\end{Prop}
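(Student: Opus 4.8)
The plan is to argue by contradiction in two parallel cases, in each reducing to the non-existence of a red $C_{\ga^1}$. Throughout I write $n=\ga^1$ and record the two consequences of $G$ being $(\Ga_1,\Ga_2)$-avoiding that I will use: since $n=\min(\Ga_1)$ we have $C_n \in \Ga_1$, so $\Gr$ contains no red $C_n$; and since $C_{\ga^2} \in \Ga_2$, the graph $\Gb$ contains no blue $C_{\ga^2}$. Supposing, toward a contradiction, that $G$ is not blue bipartite, $\Gb$ contains an odd cycle, and the goal in each case is to manufacture a red $C_n$ from this odd blue cycle.

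In the first case ($\ga^1 \geq 5$, $\ga^2=3$, $|G| \geq n$), blue triangles are forbidden, so I would run the same argument as in the proof of \Pro~\ref{g1>=5g2=3}: take a shortest odd blue cycle $C=x_1x_2 \cd x_{2k+1}x_1$ (necessarily $k \geq 2$) and note that minimality forces every chord of $C$ to be red, since a blue chord would split off a shorter odd blue cycle. I then split on the length of $C$. If $2k+1 \leq n$, I choose an $n$-subset $U \seq V(G)$ with $V(C) \seq U$, available because $|G| \geq n$; then $G[U]$ has no blue $C_3$ yet is not blue bipartite, so Lemma~\ref{Lemma X} yields a red $C_n$ in $G[U]$. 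If instead $2k+1>n$, then $2k+1 \geq 7$, and as all chords of $C$ are red, Lemma~\ref{chords} makes $G[V(C)]$ red pancyclic, producing a red $C_n$ since $3 \leq n \leq 2k+1$. Either branch contradicts the absence of a red $C_n$.

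In the second case ($\ga^1 \geq 6$, $\ga^2=5$, $|G| \geq n+2$), blue triangles and blue $4$-cycles are no longer forbidden, so the chord argument above collapses; instead I would invoke \Pro~\ref{new} directly. Since $\Gb$ contains an odd cycle but no blue $C_5$ (because $C_5 \in \Ga_2$), \Pro~\ref{new} guarantees that $\Gr$ contains red cycles of every length in $[6,|G|-2]$. The hypothesis $|G| \geq n+2$ is precisely what places $n$ in this interval, since then $6 \leq n \leq |G|-2$; hence $\Gr$ has a red $C_n$, the desired contradiction.

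Thus the proof is mostly a matter of correctly assembling earlier results: the genuine combinatorial work already lives in \Pro~\ref{new} and in Lemmas~\ref{Lemma X} and~\ref{chords}. I expect the only real obstacle to be the length-and-count bookkeeping rather than any structural difficulty. Concretely, in the first case one must check that the dichotomy $2k+1 \leq n$ versus $2k+1>n$ is exhaustive and that each branch legitimately triggers its lemma (in particular $2k+1 \geq 7 \geq 6$, so that Lemma~\ref{chords} is applicable), and in the second case one must verify that the sharp bound $|G| \geq n+2$, and not merely $|G| \geq n$, is what makes $n$ fall inside $[6,|G|-2]$.
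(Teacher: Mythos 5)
Your proposal is correct and follows essentially the same route as the paper: the first case repeats the chord argument from the proof of \Pro~\ref{g1>=5g2=3} (shortest odd blue cycle, all chords red, then Lemma~\ref{Lemma X} when $2k+1 \leq n$ and Lemma~\ref{chords} when $2k+1 > n$), and the second case invokes \Pro~\ref{new} exactly as the paper does, with the same verification that $|G| \geq \ga^1+2$ places $\ga^1$ in the interval $[6,|G|-2]$. Your bookkeeping (in particular $2k+1 \geq 7$ so that Lemma~\ref{chords} applies) is accurate throughout.
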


\begin{proof}
In order to obtain a contradiction, suppose that $G$ is not blue bipartite, and let $n=\ga^1$.

Assume first that $n \geq 5$, $\ga^2=3$, and $|G| \geq n$, and let $C$ be a shortest odd blue cycle in $G$. Then it follows, in the same way as in the proof of \Pro~\ref{g1>=5g2=3}, that $G$ contains a red $C_n$, contrary to the $(\Ga_1,\Ga_2)$-avoidance of $G$. Thus $G$ is blue bipartite.

Assume now that $n \geq 6$, $\ga^2=5$, and $|G| \geq n+2$. Then it follows from \Pro~\ref{new} that $G$ contains a red $C_n$. Thus $G$ is blue bipartite.
\end{proof}

We find this surprising, since $\RGa$ can be much larger than $\ga^1$ and $\ga^1+2$. For instance (with $\ga^1$ and $\ga^2$ as in \Pro~\ref{avoiding}), $R(C_{\ga^1},C_{\ga^2})=2\ga^1-1$. Hence, even if $|G|$ is only about half the Ramsey number, $G$ has to be blue bipartite in order to avoid both a red $C_{\ga^1}$ and a blue $C_{\ga^2}$.

Note that if $(\ga_2,\ga_e^2)=(5,6)$, then $\RGa=\ga^1+2$, whence \Pro~\ref{avoiding} does not tell us anything. This is why in \Th~\ref{struc35+}, we require that $\ga_e^2 \geq 8$.

The next result characterises all $(\Ga_1,\Ga_2)$-avoiding, blue bipartite graphs on the maximum number of vertices, for any sets of cycles.

\begin{Prop} \label{critical}
Given $(\Ga_1,\Ga_2) \in \Cc$, let $G$ be a blue bipartite graph on $\RbGa-1$ vertices; say that $\Gb \seq K_{p,q}$, where $p+q=|G|$ and $p \geq q$, and let $n=\ga^1$ and $k=\ga_e^2$.

If $2n>k$ and $(n,k) \neq (3,4)$, then $G$ is $(\Ga_1,\Ga_2)$-avoiding if and only if either
\begin{itemize}
  \item[(1)] $p=n-1$, $q=k/2-1$, and all red edges (if any) between $K_{n-1}$ and $K_{k/2-1}$ are incident with a common vertex in $K_{n-1}$,
\end{itemize}
or
\begin{itemize}
  \item[(2)] $p=n-2$, $q=k/2$, there is a vertex $x \in K_{k/2}$ such that there are $n-3$ or $n-2$ red edges between $x$ and $K_{n-2}$, and all other edges between $K_{n-2}$ and $K_{k/2}$ are blue.
\end{itemize}

If $2n \leq k$ or $(n,k)=(3,4)$, then $G$ is $(\Ga_1,\Ga_2)$-avoiding if and only if $p=q=n-1$ and, moreover, if $n \geq 4$, there is at most one red edge between the two $K_{n-1}$, while if $n=3$, there are zero (only possible if $k \geq 6$), one, or two disjoint (only possible if $\ga_e^1 \geq 6$) red edges between the two $K_2$.
\end{Prop}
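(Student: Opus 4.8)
The plan is to read $(\Ga_1,\Ga_2)$-avoidance off the interplay between the red cliques carried by the two parts and the cross edges between them. Write $P$ and $Q$ for the two colour classes of the bipartition, so that $|P|=p \geq q=|Q|$, the induced subgraphs on $P$ and $Q$ are red cliques, and every edge between $P$ and $Q$ (a \emph{cross edge}) is red or blue, the blue ones being exactly $\Gb$. Since $G$ has no odd blue cycle, $G$ is $(\Ga_1,\Ga_2)$-avoiding precisely when $\Gr$ has no cycle whose length occurs in $\Ga_1$ and $\Gb$ has no cycle whose length occurs in $\Ga_2$; as $C_n \in \Ga_1$ and $C_k \in \Ga_2$ are shortest, while all $\Ga_1$-lengths are $\geq n$ and all even $\Ga_2$-lengths are $\geq k$, I would prove the forward implication using only ``no red $C_n$'' and ``no blue $C_k$'', and the reverse implication by checking that each listed graph has no red cycle of length $\geq n$ and no blue cycle of length $\geq k$. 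Recall from \Pro~\ref{redblue} that $|G|=n+k/2-2$ in the first case and $|G|=2n-2$ in the second.

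The argument rests on a short toolkit. (A) If $p \geq n$, the red $K_p$ contains a red $C_n$, so $p \leq n-1$. (B) Two vertex-disjoint red cross edges yield a red $C_n$, obtained by joining them through red paths on $s$ vertices of $P$ and $t$ of $Q$ with $s+t=n$; this is feasible whenever $n \geq 4$, which holds whenever (B) is invoked. (C) A vertex $w$ with two red edges into a complete red clique of size $m$ on the opposite part yields a red $C_{m+1}$ (prepend $w$ to a Hamiltonian path of that clique); in particular a red $C_n$ when the opposite clique has $n-1$ vertices. (D) Conversely, if all red cross edges meet a single vertex $x$, then---since cross edges form a cut and a cycle meets a cut evenly---every red cycle either lies inside one part or is $x$ together with a path inside the opposite part, so its length is at most $1+(\text{size of that part})$. (E) On the blue side, a complete bipartite blue graph with both sides $\geq k/2$ contains a blue $C_k$, and so does a blue graph in which some vertex has two blue neighbours on a far side of $\geq k/2$ vertices, everything else between the sides being blue.

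For the forward implication, consider first $2n>k$ with $(n,k)\neq(3,4)$, so $|G|=n+k/2-2$ and necessarily $n \geq 4$ (for $n=3$ would force the excluded value $k=4$). By (A), $p \leq n-1$, hence $q \geq k/2-1$; by (B) all red cross edges meet one vertex $x$, and deleting $x$ exposes an all-blue complete bipartite graph which by (E) contains a blue $C_k$ unless $q \leq k/2$. Thus $q \in \{k/2-1,k/2\}$. When $q=k/2-1$ and $p=n-1$: if $x$ lay in $Q$ it would carry two red cross edges and (C) would give a red $C_n$, so $x$ may be taken in the $(n-1)$-clique, which is exactly configuration~(1). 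When $q=k/2$ and $p=n-2$ (so $k \leq 2n-4$): if $x$ lay in the larger $(n-2)$-clique then, for $k \leq 2n-6$, a blue $C_k$ avoiding $x$ exists by (E), so $x$ lies in the $k/2$-clique (for the balanced value $k=2n-4$ the cliques are equal and we label the one containing $x$); and if $x$ retained two blue neighbours in the $(n-2)$-clique, (E) would give a blue $C_k$ through $x$, so $x$ has at least $n-3$ red edges---this is configuration~(2). In the remaining case $2n \leq k$ (or $(n,k)=(3,4)$), $|G|=2n-2$ and (A) forces $p=q=n-1$ at once; for $n \geq 4$, facts (B) and (C) leave at most one red cross edge (two, even sharing a vertex, would close through a Hamiltonian path of an $(n-1)$-clique into a red $C_n$), while for $n=3$ the absence of a red triangle makes the red cross edges a matching, whence none, one, or two disjoint edges, the side conditions recording that none is possible only when $C_4 \notin \Ga_2$ (i.e.\ $k \geq 6$) and two only when $C_4 \notin \Ga_1$ (i.e.\ $\ga_e^1 \geq 6$).

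For the reverse implication, in every listed graph all red cross edges meet one vertex (or there is at most one), so by (D) every red cycle has length below $n$; meanwhile $\Gb$ embeds in a complete bipartite graph that is either too small to contain $C_k$ (a side of $\leq k/2-1$) or is punctured at a vertex of blue degree at most $1$ that every $C_k$ would be forced to use, so no blue $C_k$---and no longer blue cycle---can occur. Hence each listed graph is $(\Ga_1,\Ga_2)$-avoiding. I expect the delicate step to be configuration~(2) of the forward implication: one must simultaneously locate the common cross-vertex in the $k/2$-clique and show it misses at most one vertex of the other clique, in each case by exhibiting an explicit blue $C_k$ when the conclusion fails. The tight point is the balanced boundary $k=2n-4$, where both parts have $k/2=n-2$ vertices, the labelling of the two cliques becomes a free choice, and ruling out a blue $C_k$ reduces to a short Hamiltonicity check for a balanced complete bipartite graph with a few edges deleted at one vertex.
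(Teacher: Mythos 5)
Your proposal is correct and follows essentially the same route as the paper's proof: bound $p \leq n-1$ via a red $C_n$ in a large clique, force all red cross edges through a common vertex $x$ using two disjoint red cross edges (valid since $n \geq 4$ in the relevant case), and then use the absence of a blue $C_k$ to pin down $(p,q)$, the location of $x$, and the number of red edges at $x$. The only difference is one of exposition: you spell out the sufficiency direction (via your facts (D) and (E)) and the boundary case $k=2n-4$, which the paper dismisses as ``easily seen'' and leaves implicit, respectively.
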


\begin{proof}
It is easily seen that the above conditions are sufficient for $G$ to be $(\Ga_1,\Ga_2)$-avoiding. Hence, we have to prove that they are necessary.

$2n>k$ and $(n,k) \neq (3,4)$: We have $|G|=\RbGa-1=n+k/2-2$. Were $p \geq n$, $G$ would contain a red $C_n$, whence either (1) $p=n-1$ and $q=k/2-1$, or (2) $p \leq n-2$ and $q \geq k/2$.

\textbf{Case~1.} Were there two disjoint red edges between $K_{n-1}$ and $K_{k/2-1}$, $G$ would contain a red $C_n$ (since $n \geq 4$), whence all red edges between $K_{n-1}$ and $K_{k/2-1}$ have a common vertex $x$. Either $x \in K_{k/2-1}$ and there is at most one red edge between $x$ and $K_{n-1}$, or $x \in K_{n-1}$ and there are up to $k/2-1$ red edges between $x$ and $K_{k/2-1}$.

\textbf{Case~2.} Were $G$ blue complete bipartite, $G$ would contain a blue $C_k$, whence there is at least one red edge between $K_p$ and $K_q$. As in Case~1, all red edges between $K_p$ and $K_q$ have a common vertex $x$. Since $G$ contains no blue $C_k$, $p=n-2$, $q=k/2$, $x \in K_{k/2}$, and there are $n-3$ or $n-2$ red edges between $x$ and $K_{n-2}$.

$2n \leq k$ or $(n,k)=(3,4)$: In this case, $|G|=2n-2$. Were $p \geq n$, $G$ would contain a red $C_n$, whence $p=q=n-1$. Since $G$ contains no red $C_n$, no vertex is incident with more than one red edge between the two $K_{n-1}$. Therefore, if $n \geq 4$, then there is at most one red edge between the two $K_{n-1}$. On the other hand, if $n=3$, then zero red edges is impossible if $k=4$, and two disjoint red edges is impossible if $\ga_e^1=4$.
\end{proof}

Observe that if $\RbGa=\RGa$ (as is the case in \Th~\ref{struc35+}), then \Pro~\ref{critical} characterises all $(\Ga_1,\Ga_2)$-critical graphs that are blue bipartite. On the other hand, if $\RbGa<\RGa$, then no $(\Ga_1,\Ga_2)$-critical graph is blue bipartite. In this case, if \Con~\ref{MC} is true and unless $(\Ga_1,\Ga_2) \in \mathscr{C}$ (as defined above \Th~\ref{MT}), then there is a red bipartite, $(\Ga_1,\Ga_2)$-critical graph, and the obvious ``red bipartite version'' of \Pro~\ref{critical} characterises all of them.

We can now state and prove the unabridged version of \Th~\ref{struc35}.

\begin{Thm} \label{struc35+}
Given $(\Ga_1,\Ga_2) \in \Cc$, let $n=\ga^1$ and $k=\ga_e^2$, and assume that either
\begin{equation} \label{eq1+}
\h{$n \geq 5$ and $\ga^2=3$}
\end{equation}
or
\begin{equation} \label{eq2+}
\h{$n \geq 6$, $\ga^2=5$, and $k \geq 8$.}
\end{equation}
Then $G$ is $(\Ga_1,\Ga_2)$-critical if and only if $\Gb \seq K_{p,q}$, where $p+q=|G|$ and $p \geq q$, and, moreover, if $2n>k$, either \emph{(1)} or \emph{(2)} in \Pro~\ref{critical} holds, while if $2n \leq k$, $p=q=n-1$ and there is at most one red edge between the two $K_{n-1}$.
\end{Thm}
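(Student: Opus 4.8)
The plan is to assemble the statement from the two structural propositions already established, after carefully matching vertex counts. The assertion is an ``if and only if'', and the sufficiency of each listed configuration for $(\Ga_1,\Ga_2)$-avoidance is exactly the easy direction of \Pro~\ref{critical}; moreover each prescribed shape forces $|G|=\RbGa-1$ (e.g.\ $(n-1)+(k/2-1)=n+k/2-2$ in alternative~(1)). So the real content is the forward direction: an arbitrary $(\Ga_1,\Ga_2)$-critical $G$ must have one of the prescribed shapes.

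First I would pin down the Ramsey number. Under \eqref{eq1+} (resp.\ \eqref{eq2+}) the hypotheses are precisely those of \Pro~\ref{g1>=5g2=3} (resp.\ \Pro~\ref{g1>=6g2=5}), whose proofs establish $\RGa \leq \RbGa$; combined with the standing chain $\RbGa \leq \mGa \leq \RGa$ this forces $\RGa=\mGa=\RbGa$. Hence a critical graph has exactly $|G|=\RGa-1=\RbGa-1$ vertices, which is the regime in which \Pro~\ref{critical} operates. Next I would show $G$ is blue bipartite via \Pro~\ref{avoiding}, for which its vertex-count requirement must be checked. Writing $\RbGa=\min(n+k/2-1,2n-1)$ (the case $(n,k)=(3,4)$ is excluded since $n \geq 5$): under \eqref{eq1+} one has $k \geq 4$, so both arguments of the minimum are at least $n+1$ and thus $|G| \geq n$; under \eqref{eq2+} the extra hypothesis $k \geq 8$ makes both arguments at least $n+3$ (using $n \geq 6$ for the second), so $|G| \geq n+2$. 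Thus \Pro~\ref{avoiding} applies and yields $\Gb \seq K_{p,q}$ with $p+q=|G|$ and $p \geq q$.

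Finally I would feed this $G$ --- a blue bipartite $(\Ga_1,\Ga_2)$-avoiding graph on $\RbGa-1$ vertices --- into \Pro~\ref{critical}. Since $n \geq 5$, neither the excluded pair $(n,k)=(3,4)$ nor any of the $n=3$ subcases can occur, so the proposition collapses to exactly the two alternatives quoted: if $2n>k$ then (1) or (2) holds, and if $2n \leq k$ then $p=q=n-1$ with at most one red edge between the two red $K_{n-1}$. This establishes necessity, and together with the sufficiency direction of \Pro~\ref{critical} (recalling $\RbGa-1=\RGa-1$) it gives the characterisation. I expect the only genuinely delicate point to be the count in the second step: verifying that every critical graph is large enough for \Pro~\ref{avoiding} to apply. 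This is precisely what the hypothesis $k \geq 8$ in \eqref{eq2+} secures --- without it a critical graph can be too small (indeed for $(\ga^2,\ga_e^2)=(5,6)$ one has $\RGa=n+2$ and the blue-bipartiteness conclusion fails), so the entire argument hinges on this bound.
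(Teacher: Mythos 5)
Your proposal is correct and follows essentially the same route as the paper: deduce $\RGa=\RbGa$ from \Pros~\ref{g1>=5g2=3} and \ref{g1>=6g2=5} together with $\RbGa \leq \mGa \leq \RGa$, verify that $|G|=\RbGa-1$ meets the vertex-count thresholds of \Pro~\ref{avoiding} (namely $\RbGa \geq n+1$ under \eqref{eq1+} and $\RbGa \geq n+3$ under \eqref{eq2+}, the latter being exactly what $k \geq 8$ secures), conclude blue bipartiteness, and then invoke \Pro~\ref{critical} for both directions of the equivalence. Your explicit arithmetic with $\RbGa=\min(n+k/2-1,2n-1)$ and your observation that the $n=3$ subcases of \Pro~\ref{critical} drop out are slightly more detailed than the paper's proof, but the argument is the same.
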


\begin{proof}
Since either \eqref{eq1+} or \eqref{eq2+} holds, $\RGa=\RbGa$. As noted above, \Pro~\ref{critical} therefore characterises all $(\Ga_1,\Ga_2)$-critical graphs that are blue bipartite. However, if \eqref{eq1+} holds, then $\RbGa \geq \ga^1+1$, so if $G$ is $(\Ga_1,\Ga_2)$-critical, then $|G|=\RGa-1 \geq \ga^1$. Similarly, if \eqref{eq2+} holds, then $\RbGa \geq \ga^1+3$, so if $G$ is $(\Ga_1,\Ga_2)$-critical, then $|G| \geq \ga^1+2$. Hence, by \Pro~\ref{avoiding}, all $(\Ga_1,\Ga_2)$-critical graphs are blue bipartite. This completes the proof.
\end{proof}

As noted in the introduction, \Co~\ref{cor3} is a direct consequence of \Th~\ref{struc35+}.

\medskip

We end this paper with some comments regarding star-critical Ramsey numbers.

Given a graph $G$ and a subgraph $H$ of $G$, let $G-H$ be the subgraph of $G$ obtained by deleting the edges of $H$. The \emph{star-critical Ramsey number} $r_*(G_1,G_2)$ was introduced by Hook and Isaak~\cite{Hook,H-I}. It is the smallest integer $k$ such that each red-blue colouring of the edges of $K_n-K_{1,n-1-k}$, where $n=R(G_1,G_2)$, contains a red copy of $G_1$ or a blue copy of $G_2$. That is, it describes the largest star whose edges can be removed from $K_n$ so that the resulting subgraph is still forced to contain a red $G_1$ or a blue $G_2$.

Let $\De$ be the set of all pairs $(n,k)$ of integers such that $n \geq k \geq 3$, $k$ is odd, and $(n,k) \neq (3,3)$. Zhang, Broersma, and Chen~\cite{Z-B-C} recently proved the following:

\begin{Thm}[\mbox{\cite[\Th~4]{Z-B-C}}] \label{Z-B-C}
Let $(n,k) \in \De$. Then $r_*(C_n,C_k)=n+1$.
\end{Thm}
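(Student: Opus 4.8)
The plan is to prove $r_*(C_n,C_k)=n+1$ by establishing the two bounds $r_*(C_n,C_k)\ge n+1$ and $r_*(C_n,C_k)\le n+1$ separately. First note that $k$ odd means $k\equiv 1$, and $n\ge k\ge 3$ together with $(n,k)\neq(3,3)$ forces $n\ge 4$; hence $(n,k)\in\De_3$, and \Th~\ref{HS} gives $R(C_n,C_k)=2n-1$. Write $N=2n-1$. The workhorse throughout is Colouring~6 on $N-1=2n-2$ vertices: its blue subgraph is $K_{n-1,n-1}$, so its red subgraph is the disjoint union of two red cliques $A,B$ of order $n-1$ (hence no red $C_n$) while its blue subgraph is bipartite (hence no odd blue $C_k$).

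\emph{Lower bound.} To show $r_*(C_n,C_k)>n$ I would exhibit a $(\{C_n\},\{C_k\})$-avoiding colouring of $K_N-K_{1,n-2}$, in which the special vertex $v$ has degree $N-1-(n-2)=n$. Take Colouring~6 on $A\cup B$ and attach $v$ by colouring all $n-1$ edges from $v$ to $B$ blue and exactly one edge from $v$ to some $a_1\in A$ red, leaving $v$ non-adjacent to the remaining $n-2$ vertices of $A$ (which is precisely the removed star $K_{1,n-2}$). In red, $v$ is a leaf, so every red cycle lies wholly inside $A$ or inside $B$ and has length at most $n-1$; in blue, the colouring is bipartite with parts $A\cup\{v\}$ and $B$, hence has no odd cycle. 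Thus neither a red $C_n$ nor a blue $C_k$ occurs, giving $r_*(C_n,C_k)\ge n+1$.

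\emph{Upper bound.} Let $G$ be any red-blue colouring of $K_N-K_{1,n-3}$, so the special vertex $v$ has degree $N-1-(n-3)=n+1$, and suppose for contradiction that $G$ avoids red $C_n$ and blue $C_k$. Then $H=G-v$ is a $(\{C_n\},\{C_k\})$-avoiding colouring of $K_{2n-2}$. The decisive step is the structural claim that, up to interchanging the two colours (which is possible only when $n=k$), $H$ is the colouring of the previous paragraph: its blue subgraph is $K_{n-1,n-1}$ with two red cliques $A,B$ of order $n-1$, apart from at most one recoloured edge between $A$ and $B$. Granting this, since $v$ has $n+1$ neighbours among the $2n-2$ vertices and $|A|=|B|=n-1$, it has at least two neighbours in each of $A$ and $B$. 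Two red edges from $v$ into $A$ would close a red $C_n$ via a red Hamilton path of the clique $A$, so $v$ has at most one red neighbour in each of $A,B$; consequently $v$ has a blue neighbour $a'\in A$ and a blue neighbour $b'\in B$. Because $k\le n$ is odd there are more than $(k-1)/2$ vertices available in each part, so I can route a blue $a'$–$b'$ path of length $k-2$ through $K_{n-1,n-1}$ (avoiding the single exceptional edge, and using the extra blue neighbour of $v$ when $k=3$) and close it through $v$ to produce a blue $C_k$, a contradiction. When $n=k$ and $H$ is the colour-swapped colouring, the symmetric routing produces a red $C_n$ instead. This yields $r_*(C_n,C_k)\le n+1$.

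\emph{Main obstacle.} The heart of the matter is the structural claim for $H$: that every $(\{C_n\},\{C_k\})$-avoiding colouring of $K_{2n-2}$ is bipartite in one colour, of the above form. This is a uniqueness (stability) statement for the extremal colourings of $R(C_n,C_k)=2n-1$. For $k=3$ with $n\ge 5$ it follows by combining \Pro~\ref{avoiding} — which forces $H$ to be blue bipartite since $|H|=2n-2\ge n$ — with \Pro~\ref{critical}, which lists the avoiding blue-bipartite graphs on $\RbGa-1=2n-2$ vertices (the case $(n,k)=(4,3)$ being a finite check); for $k=5$ the same combination applies once $\ga_e^2\ge 8$, with the remaining small cases settled by inspection. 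For a general odd $k$ neither \Pro~\ref{avoiding} nor \Th~\ref{struc35} is available, and one must prove independently that avoiding red $C_n$ and blue $C_k$ on $2n-2$ vertices forces one colour to be bipartite. I would attempt this by taking a longest red cycle in $H$ and arguing with Lemmas~\ref{Lemma 2.1}, \ref{Lemma X}, and \ref{chords}, in the spirit of \Pros~\ref{g1>=5g2=3} and \ref{g1>=6g2=5}; this is where I expect the real difficulty to lie.
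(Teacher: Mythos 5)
Your upper-bound argument has a genuine gap: it is conditional on the structural claim that every $(C_n,C_k)$-avoiding colouring of $K_{2n-2}$ has blue subgraph $K_{n-1,n-1}$ or $K_{n-1,n-1}-e$ (up to colour interchange when $n=k$), and for general odd $k$ this is precisely the paper's open \Que~\ref{ques} --- the paper asks for which pairs it holds and whether it holds for all of them, without an answer. The characterisation is established only for $k=3$ with $n \geq 5$ and $k=5$ with $n \geq 6$ (\Co~\ref{cor3}, via \Pro~\ref{avoiding} and \Pro~\ref{critical}), so for every odd $k \geq 7$, as well as for the pairs $(4,3)$ and $(5,5)$ that you dismiss as ``a finite check'' and ``inspection'' without performing them, your proof does not close. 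Your final paragraph candidly flags this, but the sketched attack (longest red cycle plus Lemmas~\ref{Lemma 2.1}, \ref{Lemma X}, and \ref{chords}) is not carried out, and it is not even known that the claim is true in that range; note moreover that \Th~\ref{Z-B-C} is cited from \cite{Z-B-C}, whose proof of the upper bound the present paper describes as ``more complicated'' --- i.e., the known proof does \emph{not} route through the critical-graph characterisation, which suggests that making your approach unconditional is itself a hard problem (it would answer \Que~\ref{ques} affirmatively).

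Where your argument is complete, it matches the paper's own demonstration. Your lower-bound construction (Colouring~6 plus a vertex $v$ with one red edge into $A$ and all-blue edges to $B$) is correct and is the ``easy construction'' the paper alludes to, and your degree counting for $v$ ($n+1$ edges, at most one red into each red $K_{n-1}$ since two would close a red $C_n$ through a Hamilton path of the clique, hence blue neighbours $a' \in A$, $b' \in B$) is exactly the paper's argument following \Co~\ref{cor3}; your routing of a blue path of length $k-2$ through $K_{n-1,n-1}$ avoiding the exceptional edge is a sound generalisation of the paper's $k=3$ closing step, and your observation that the colour-swapped extremal colouring arises only when $n=k$ is correct. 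So the proposal proves the theorem for $k \in \{3,5\}$ (with $n \geq 5$, respectively $n \geq 6$) by essentially the paper's route, but as a proof of the full statement for all $(n,k) \in \De$ it is incomplete at its decisive step.
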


The lower bound in \Th~\ref{Z-B-C} follows from an easy construction. The proof of the upper bound, on the other hand, is more complicated. We observe that the upper bound follows easily from \Co~\ref{cor3} in the special cases $k=3$ with $n \geq 5$, and $k=5$ with $n \geq 6$. We demonstrate this when $k=3$; the proof when $k=5$ is completely analogous.

Let $r=R(C_n,C_3)=2n-1$. We have to show that every colouring of $G=K_r-K_{1,r-1-(n+1)}=K_{2n-1}-K_{1,n-3}$ contains a red $C_n$ or a blue $C_3$. Let $v$ be the centre vertex of the star $K_{1,n-3}$, and put $H=K_{2n-1}-v$ (so $H$ is a $K_{2n-2}$). Since $(2n-2)-(n-3)=n+1$, $v$ is adjacent to $n+1$ vertices of $H$. Colour $G$ arbitrarily, and assume $G$ becomes $(C_n,C_3)$-avoiding. By \Co~\ref{cor3}, $H_{\mathrm{blue}}=K_{n-1,n-1}$ or $K_{n-1,n-1}-e$ for some edge $e$. Since $G$ contains no red $C_n$, $v$ has at most one red edge to each red $K_{n-1}$. Hence, $v$ has at least $(n+1)-2=n-1$ blue edges. Now, if all blue edges go to the same red $K_{n-1}$, we obtain a red $C_n$, and if not, we obtain a blue $C_3$, contrary to the hypothesis. Thus, $r_*(C_n,C_3) \leq n+1$.

In fact, for all pairs $(n,k) \in \De$ such that $G$ is $(C_n,C_k)$-critical if and only if $\Gb=K_{n-1,n-1}$ or $K_{n-1,n-1}-e$ for some edge $e$ (see \Que~\ref{ques}), a completely analogous argument proves the upper bound in \Th~\ref{Z-B-C}.

\section*{Acknowledgements}

The author wants to thank J\"orgen Backelin for valuable comments and discussions, and Axel Hultman and Stanis{\l}aw P.~Radziszowski for comments which much improved the presentation.

\bibliographystyle{amsplain}

\bibliography{Referenser_bara_initialer}

\end{document}